\newenvironment{nouppercase}{%
  \renewcommand{\uppercasenonmath}[1]{\LARGE}}{}
\renewcommand{\geq}{\geqslant}
\renewcommand{\leq}{\leqslant}
\newtheorem{thm}{Theorem}
\newtheorem{prop}[thm]{Proposition}
\newtheorem{lem}[thm]{Lemma}
\newtheorem{assu}[thm]{Assumption}
\def\K{\mathbb{K}}
\def\Z{\mathbb{Z}}
\def\N{\mathbb{N}}
\def\C{\mathbb{C}}
\def\R{\mathbb{R}}
\def\Q{\mathbb{Q}}
\def\P1{\mathbb{P}^{1}}
\def\Etproj{\overline{E}_{t}}
\def\iup{{\widetilde{\iota}}}
\def\rx{r_{x}}
\def\ry{r_{y}}
\newcommand{\cx}{{\rm\textbf{\textsf{x}}}}
\def\testb#1{\testb@i#1,,\@nil}%
\def\testb@i#1,#2,#3\@nil{%
  \draw[->, thick] (O) --++(#1);
  \ifx\relax#2\relax\else\testb@i#2,#3\@nil\fi}
\title[Enumeration of weighted quadrant walks]{Enumeration of weighted quadrant walks:\\criteria for algebraicity and D-finiteness}
\date{\today}
\author{Thomas Dreyfus}
\address{Universit\'e de Bourgogne, CNRS, Institut de Math\'ematiques de Bourgogne, Dijon, France}
\email{thomas.dreyfus@cnrs.fr}
\author{Andrew Elvey Price}
\address{Universit\'e de Tours, CNRS, Institut Denis Poisson, Tours, France}
\email{andrew.elvey-price@cnrs.fr}
\author{Kilian Raschel}
\address{CNRS, International Research Laboratory France-Vietnam in mathematics and its applications, Vietnam Institute for Advanced Study in Mathematics, Hano\"i, Vietnam}
\email{raschel@math.cnrs.fr}
\thanks{This work has received funding from the European Research Council (ERC)\ under the European Union's Horizon 2020 research and innovation programme under the Grant Agreement No.~759702, from Centre Henri Lebesgue, programme ANR-11-LABX-0020-0 and from the project DeRerumNatura, programme ANR-19-CE40-0018. The IMB has received support from the EIPHI Graduate School (contract ANR-17-EURE-0002).}
\begin{document}
\begin{nouppercase}
\maketitle
\end{nouppercase}

\begin{abstract}
In the field of enumeration of weighted walks confined to the quarter plane, it is known that the generating functions behave very differently depending on the chosen step set; in practice, the techniques used in the literature depend on the complexity of the counting series. In this paper we introduce a unified approach based on the theory of elliptic functions, which allows us to have a common proof of the characterisation of the algebraicity and D-finiteness of the generating functions.
\end{abstract}

\section{Introduction and main results}
\label{sec:intro}

\subsection*{Walks in the positive quadrant}
In this paper we consider weighted walks in the quarter plane and their associated generating functions, and provide necessary and sufficient conditions for the latter series to be D-finite (i.e., solution of a linear differential equation with polynomial coefficients)\ or even algebraic (solution of a polynomial equation).
A small step walk (or path)\ in the quarter plane $\N^2=\{0,1,2,\ldots\}^2$ is a sequence of points $P_0,P_1,\ldots,P_n$, each $P_k$ lying in the quarter plane, the steps $P_{k+1}-P_k$ belonging to a given finite step set $\mathcal S\subset \{0, \pm1\}^2$. See Figure~\ref{fig:traj} for an illustration. Such objects are very natural in both combinatorics and probability theory: they are interesting in themselves, and also because they are strongly related to other discrete structures \cite{BMM,DeWa-15}.

\begin{figure}[!t]
\centering
\includegraphics[width=0.5\textwidth]{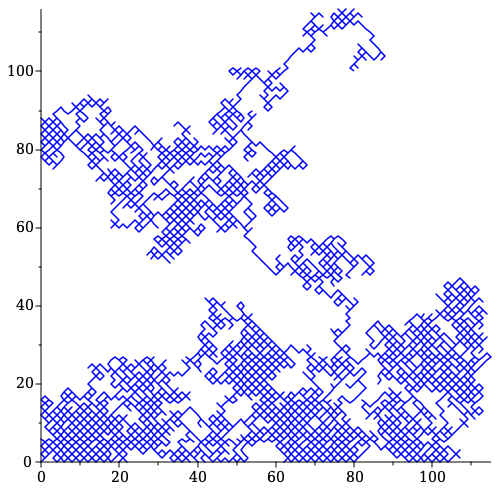}
\caption{A walk of length $n=10,000$ started at $P_0=(0,0)$ with jumps in $\mathcal S=\{\pm1\}^2$ and uniform transition probabilities $d_{i,j}=\frac{1}{4}$}
\label{fig:traj}
\end{figure}

To each step $(i,j)\in\{0,\pm 1\}^{2}$ we assign a weight $d_{i,j}\geq0$, which can be seen as the probability that the walk goes in the direction $(i,j)$. With a renormalisation we can assume that $\sum d_{i,j}=1$.  The model is called unweighted if all non-zero $d_{i,j}$ have the same values. The weight of a given (finite length)\ path is defined as the product of the weights of its component steps. For any $(i,j)\in \N^{2}$ and any $n\geq 0$, we let  
\begin{equation}
\label{eq:def_prob}
   \mathbb P\bigl(P_0 \xrightarrow{n}(i,j)\bigr)
\end{equation}
be the probability that the walk started at $P_0$ (often taken as the origin $(0,0)$)\ is at some generic position $(i,j)$ after the $n$-th step, with all intermediate points $P_k$ remaining in the cone. In other words, the probability in \eqref{eq:def_prob} is the sum of the weights of all paths reaching the position $(i,j)$ from the initial position $(0,0)$ after $n$ steps.
 More specifically, we will turn our attention to the generating (or counting)\ function
\begin{equation}
\label{eq:def_generating_function}
   Q(x,y,t) = \sum_{i,j,n\geq 0} \mathbb P\bigl(P_0 \xrightarrow{n}(i,j)\bigr)x^iy^jt^n.
\end{equation}

\subsection*{Classification of quadrant walk models}
There is a large literature on (mostly unweighted)\ walks in the quarter plane, focusing on various probabilistic and combinatorial aspects. To summarise, two main questions have attracted the attention of the mathematical community: first, finding an exact  expression for the probability~\eqref{eq:def_prob}, or equivalently for the series~\eqref{eq:def_generating_function}. The second question is to characterise the algebraic nature of the series~\eqref{eq:def_generating_function}, according to the classes of functions 
\begin{equation*}
    \{\text{rational}\} \subset \{\text{algebraic}\} \subset \{\text{D-finite}\} \subset \{\text{D-algebraic}\}.
\end{equation*} 
The first question, which is combinatorial in nature, should not overshadow the second. Understanding the nature of $Q(x, y, t)$ has implications for the asymptotic behaviour of the coefficients (see, for example, reference \cite{BRS14}), and also allows us to gain insight into the complexity of these lattice path problems (to illustrate this fact, let us recall that unconstrained walks are associated with rational generating functions, while walks confined to a half-plane admit algebraic counting functions \cite{BMP}). This is the second question we will consider in the present work. To be precise, the function $Q(x,y,t)$ is said to be D-finite (resp.\ D-algebraic, or differentially algebraic)\ if it satisfies a linear (resp.\ algebraic)\ differential equation with polynomial coefficients in $\mathbb C(x,y,t)$, in each of its three variables $x,y,t$. A function is said to be differentially transcendental if it is not D-algebraic.

Throughout this paper, in some results we will assume that a certain group of transformations (simply related to the weights)\ is finite or infinite; this group will be properly introduced in Section~\ref{secnot}. This group was introduced in \cite{FIM17} in a probabilistic context, and used in a crucial way for combinatorial purposes in the seminar paper \cite{BMM}. Intuitively, finite group models are those to which a generalisation of the well-known reflection principle applies. Finite group models of order $4$, $6$ and $8$ are fully characterised in \cite{zbMATH06577211}; see Figure~\ref{fig:step_sets} for some examples. Moreover, it is proved in \cite[Rem.~5.1]{hardouin2021differentially} that the order of the group cannot exceed $12$ (see also \cite{JiTaZh-21}). What is still missing is a complete description of the parameters that lead to a group of order $10$ and $12$. There is, however, a characterisation of the finiteness of the group in terms of the existence of a certain invariant, see \cite[Lem.~2.4]{hardouin2021differentially}.

\begin{figure}
\begin{center}
 \begin{tikzpicture}[scale=.7] 
    \draw[->,white] (1,2) -- (0,-2);
    \draw[->,white] (1,-2) -- (0,2);
    \draw[->] (0,0) -- (1,1);
    \draw[->] (0,0) -- (1,0);
    \draw[->] (0,0) -- (-1,-1);
    \draw[->] (0,0) -- (-1,0);
    \draw[->] (0,0) -- (0,1);
    \draw[->] (0,0) -- (-1,1);
    \draw[->] (0,0) -- (1,-1);
    \node at (-1.2,0) {$3$};
    \node at (1.2,1.3) {$15$};
    \node at (-1.2,-1.4) {$2$};
    \node at (0,1.3) {$13$};
    \node at (1.2,0) {$9$};
    \node at (1.2,-1.4) {$6$};
    \node at (-1.2,1.3) {$5$};
   \end{tikzpicture}\qquad
\begin{tikzpicture}[scale=.7] 
    \draw[->,white] (1,2) -- (0,-2);
    \draw[->,white] (1,-2) -- (0,2);
    \draw[->] (0,0) -- (0,1);
    \draw[->] (0,0) -- (1,1);
    \draw[->] (0,0) -- (1,0);
    \draw[->] (0,0) -- (1,-1);
    \draw[->] (0,0) -- (0,-1);
    \draw[->] (0,0) -- (-1,-1);
    \node at (-1.2,-1.4) {$1$};
    \node at (1.2,1.3) {$7$};
    \node at (0,-1.4) {$2$};
    \node at (0,1.3) {$7$};
    \node at (1.2,0) {$5$};
    \node at (1.2,-1.4) {$1$};
   \end{tikzpicture}
   \qquad
\begin{tikzpicture}[scale=.7] 
    \draw[->,white] (1,2) -- (0,-2);
    \draw[->,white] (1,-2) -- (0,2);
    \draw[->] (0,0) -- (1,1);
    \draw[->] (0,0) -- (1,0);
    \draw[->] (0,0) -- (-1,0);
    \draw[->] (0,0) -- (-1,-1);
    \node at (-1.2,0) {$6$};
    \node at (1.2,0) {$2$};
    \node at (1.2,1.3) {$4$};
    \node at (-1.2,-1.4) {$3$};
   \end{tikzpicture}
   \qquad
\begin{tikzpicture}[scale=.7] 
    \draw[->,white] (1,2) -- (0,-2);
    \draw[->,white] (1,-2) -- (0,2);
    \draw[->] (0,0) -- (1,1);
    \draw[->] (0,0) -- (1,0);
    \draw[->] (0,0) -- (0,-1);
    \draw[->] (0,0) -- (-1,0);
    \draw[->] (0,0) -- (0,1);
    \draw[->] (0,0) -- (-1,1);
    \draw[->] (0,0) -- (1,-1);
    \node at (-1.2,0) {$1$};
    \node at (1.2,1.3) {$1$};
    \node at (0,-1.4) {$1$};
    \node at (0,1.3) {$2$};
    \node at (1.2,0) {$2$};
    \node at (1.2,-1.4) {$1$};
    \node at (-1.2,1.3) {$1$};
   \end{tikzpicture}
\end{center}
\caption{Four examples of finite group models taken from \cite{zbMATH06577211}. From left to right, a model with a group of order $4$, $6$, $8$ and $10$. They should be normalised to satisfy the property that $\sum_{(i,j)\in\mathcal S}d_{i,j}=1$. The paper \cite{zbMATH06577211} actually contains infinite families of finite group examples. For example, the third example defines a group model of order $8$ if and only if the associated weights $d_{i,j}$ satisfy $d_{1,0}d_{-1,0}=d_{1,-1}d_{-1,1}\neq 0$. In the examples above, the weights are taken to be rational numbers; note that non-rational weights are also allowed.}
\label{fig:step_sets}
\end{figure}
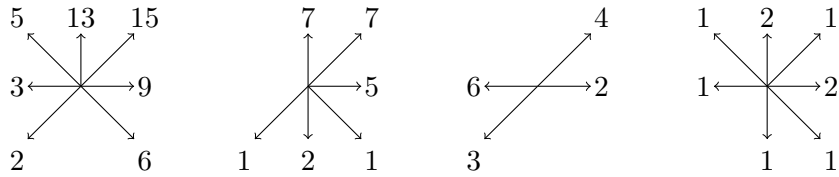

\subsection*{Main results}
Our first and main result is the following; it provides further progress in the classification of lattice walks in the quarter plane. 
\begin{thm}
\label{thm:equi_cond}
The following are equivalent.
\begin{enumerate}
 \item\label{it:1:groupf}The group is finite. 
 \item\label{it:2:seriesx}The series $Q(x,y,t)$ satisfies a (non-trivial)\ linear differential equation with coefficients in $\C(x,y,t)$, in the variable $x$.
  \item\label{it:3:seriesy}The series $Q(x,y,t)$  satisfies a linear differential equation with coefficients in $\C(x,y,t)$, in the variable  $y$.
   \item\label{it:4:seriest}The series $Q(x,y,t)$ satisfies a linear differential equation with coefficients in $\C(x,y,t)$, in the variable  $t$.
\end{enumerate}
\end{thm}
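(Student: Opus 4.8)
The plan is to route all four equivalences through the analytic geometry of the \emph{kernel curve}. First I would record the fundamental functional equation obtained by multiplying $Q$ by the kernel $K(x,y,t)=xy\bigl(1-t\sum_{(i,j)\in\mathcal S}d_{i,j}x^{i}y^{j}\bigr)$: since the inner sum carries the monomials $x^{-1},y^{-1}$, the factor $xy$ clears denominators and one obtains a relation expressing $K\cdot Q$ as $xy$ minus an explicit combination of the two boundary series $Q(x,0,t)$ and $Q(0,y,t)$ together with $Q(0,0,t)$. Fixing a generic small $t$ (the degenerate, genus-$0$ weights are disposed of separately, the group being automatically finite and $Q$ manifestly D-finite there), the biquadratic affine curve $\{K(x,y,t)=0\}$ compactifies to an elliptic curve $\Etproj$ carrying the two involutions $\iota_1$ (exchanging the two $y$-roots above a given $x$)\ and $\iota_2$ (exchanging the two $x$-roots above a given $y$). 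After uniformising $\Etproj\cong\C/(\Z\omega_1+\Z\omega_2)$ both involutions become affine maps $z\mapsto -z+c_k$, so their composition is a translation $z\mapsto z+\omega_3$, and the group of the walk is finite precisely when this translation is torsion, i.e.\ $\omega_3\in\Q\omega_1+\Q\omega_2$. This is the elliptic reformulation of the invariant criterion recalled from \cite[Lem.~2.4]{hardouin2021differentially}. Note that $x$ and $y$ play symmetric roles, so it will suffice to treat one of \eqref{it:2:seriesx}, \eqref{it:3:seriesy}.

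For the implication \eqref{it:1:groupf}$\Rightarrow$\eqref{it:2:seriesx},\eqref{it:3:seriesy},\eqref{it:4:seriest}, I would exploit finiteness of the orbit under $z\mapsto z+\omega_3$. When $N\omega_3\in\Z\omega_1+\Z\omega_2$ for some $N$, the functional equation, rewritten on the curve as a first-order difference equation $f(z+\omega_3)-f(z)=g(z)$ with $g$ an explicit elliptic function, can be telescoped around the resulting finite orbit; this produces a closed-form expression for the boundary series as an elliptic function of $z$. Pushing it back through the algebraic parametrisation $z\mapsto(x(z),y(z))$, together with the orbit-sum (invariant)\ construction available for finite groups, then exhibits $Q(x,y,t)$ as a D-finite function in each of $x$, $y$ and $t$ separately.

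The heart of the matter is the converse, that D-finiteness in any single variable forces the group to be finite, and this is where I expect the main obstacle. Suppose $Q$ is D-finite in $x$ (the case of $y$ being symmetric, that of $t$ treated below). Lifting the relevant boundary section to $\C/(\Z\omega_1+\Z\omega_2)$ and using $f(z+\omega_3)-f(z)=g(z)$, I would argue that D-finiteness constrains $f$ to have finitely many singularities modulo the period lattice and finite-dimensional monodromy; but the difference equation propagates any singularity of $f$ along the entire orbit $\{z_0+n\omega_3:n\in\Z\}$, which is infinite (indeed dense)\ modulo $\Z\omega_1+\Z\omega_2$ as soon as $\omega_3$ is incommensurable with the lattice, producing infinitely many singularities and contradicting D-finiteness. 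Hence $\omega_3$ is torsion and the group is finite. The delicate points are: (a)\ transferring D-finiteness of $Q$ in one complex variable into an analytic finiteness statement about the lifted section on $\Etproj$, which requires controlling the multivalued algebraic maps $x(z),y(z)$ and separating genuine singularities from apparent ones; and (b)\ the variable $t$, which is the most delicate since the curve $\Etproj$ and its periods themselves vary with $t$, so one must work in a neighbourhood of a generic $t$ and track how the translation $\omega_3(t)$ and the singularity orbit move, deducing torsion of $\omega_3$ from the D-finiteness of the $t$-dependence.
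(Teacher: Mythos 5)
Your proposal has the right skeleton (functional equation, kernel curve, uniformisation, ``group finite $\Leftrightarrow$ $\omega_3$ torsion'', singularity propagation for the converse), but both directions contain genuine gaps. For the direction \ref{it:1:groupf}$\Rightarrow$\ref{it:2:seriesx}--\ref{it:4:seriest}, your central claim fails: telescoping the difference equation around the finite orbit yields $r_x(\omega+\ell\omega_3(t),t)=r_x(\omega,t)+\mathcal{O}_x(\omega,t)$, i.e.\ \eqref{eqrx_transform_finite_group}, so the lifted boundary series is an elliptic function of $\omega$ \emph{only when the orbit sum $\mathcal{O}_x$ vanishes}. For finite-group models with non-zero orbit sum (e.g.\ the first two models of Figure~\ref{fig:step_sets}) your ``closed-form expression for the boundary series as an elliptic function of $z$'' is false: if it held, the pullback would be algebraic over $\C(x,t)$ and hence $Q$ algebraic, contradicting Theorem~\ref{thm1}, which says algebraicity occurs precisely when $\mathcal{O}(x,y)=0$. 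The paper handles the non-zero orbit-sum case by the decomposition \eqref{eq4}, $r_x=\mathcal{O}_x\phi+\psi$, where $\phi$ is built from the (quasi-periodic, non-elliptic) Weierstrass zeta function; this detour is unavoidable. Moreover, even where ellipticity in $\omega$ holds, it only controls the $x$-dependence. D-finiteness in $t$ --- the actual novelty of the theorem --- requires proving that the Weierstrass data is tame in $t$: $\wp^{(1,k)}$ algebraic over $\C(x(\omega,t),t)$ and its invariants algebraic over $\C(t)$ (Theorem~\ref{thm3}, via transformation theory), D-finiteness of $\omega_1(t)$ and of $\zeta$-values (Lemma~\ref{lem:zeta_DF}), and the pole/principal-part criterion of Theorem~\ref{thm2}. ``Pushing it back through the algebraic parametrisation'' is not an argument in the $t$-variable, because the lattice itself moves with $t$.

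For the converse, your sketch in $x$ (and $y$) is essentially the argument of \cite{KuRa12} that the paper builds on, but note two prerequisites you omit: one must first reduce to the D-algebraic case via \cite[Thm~1.1]{dreyfus2021differential} (otherwise differential transcendence in all variables already concludes), so that the decoupling of \cite[Prop.~3.9]{hardouin2021differentially} is available, and one must produce an initial singularity to propagate (the paper cites \cite[Lem.~8.16]{EP-22}: the boundary series is not rational for small $t$). The serious gap is \ref{it:4:seriest}$\Rightarrow$\ref{it:1:groupf}: your point (b) is a restatement of the goal, not a proof. The paper's argument is a delicate period-rationality analysis with no analogue in your sketch: a hypothetical $\partial_t$-equation plus Cauchy--Lipschitz forces pole projections $x(b(t)+k\omega_3(t),t)$ to be constant in $t$; then one splits on whether $\omega_3(t)/\omega_2(t)$ is constant. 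If it is non-constant, one finds a nearby $t_2$ at which $x(\cdot,t_2)$ takes the same value at three distinct points of one fundamental domain, contradicting that $x(\omega,t_2)$ has degree two; if it is constant (hence irrational), a density argument shows $x(b(t)+\alpha\omega_2(t),t)$ is independent of $t$ for every $\alpha$, forcing $\omega_1(t)/\omega_2(t)$ to be constant in $t$, which contradicts $\omega_1(t)/\omega_2(t)\to0$ as $t\to0$. Finally, two smaller errors: in the genus-zero case the series can be differentially transcendental, so it is neither ``manifestly D-finite'' nor is the group ``automatically finite'' there; and you conflate the group $G$ (with $t$ an indeterminate) with its specializations $G_t$ --- $G$ infinite does not prevent $G_{t_0}$ from being finite for a dense set of $t_0$, so ``fixing a generic small $t$'' requires the care the paper takes in choosing $t_0$.
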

Throughout this paper, although we will often omit the term non-trivial, it is implicitly assumed that all differential equations are non-trivial, i.e., non-zero. 

Note that $x$ and $y$ play a symmetric role so the equivalence between \ref{it:2:seriesx} and \ref{it:3:seriesy} is not so surprising. On the other hand, the equivalence between the $t$ behavior and the other variables behavior is at first sight much more surprising.  Proofs of the equivalence of the first three conditions of Theorem~\ref{thm:equi_cond} have already appeared in the literature for fixed values of $t$. More specifically, if the assumption~\ref{it:1:groupf} on the finiteness of the group holds, the properties \ref{it:2:seriesx} and \ref{it:3:seriesy} are obtained in \cite[Thm~4.1]{dreyfus2019differential}. On the other hand, if the group is infinite, it is shown in \cite{KuRa12} that the series $Q(x,y,t)$ does not satisfy any linear equation in $x$ or in $y$, further assuming that the step sets are unweighted. The previous result is extended to the weighted case in \cite[Thm~8.7]{EP-22}. Compared to the present work, the main difference is that in the previous references all results are obtained with a fixed value of $t$. The main novelty of our work is precisely to be able to deal with this variable $t$, see~\ref{it:4:seriest}, which is very important from a combinatorial point of view. On the other hand, the result of \cite{EP-22} holds not only for the quarter plane, but for any cone obtained as the union of an odd number of quadrants.

Theorem~\ref{thm:equi_cond} will be obtained as a consequence of two further results, namely Theorems~\ref{thm:main_section_DF} and \ref{thm4}.

\begin{thm}
\label{thm:main_section_DF}
Assume the group of the walk is finite. Then $Q(x,y,t)$ in \eqref{eq:def_generating_function} satisfies a linear differential equation with coefficients in $\C(x,y,t)$ in each of its variables. 
\end{thm}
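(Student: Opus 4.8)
The plan is to let the finiteness of the group produce an \emph{explicit} closed form for $Q(x,y,t)$ that exhibits D-finiteness in all three variables simultaneously, rather than proceeding variable by variable with $t$ frozen as in \cite{dreyfus2019differential}. First I would write down the kernel functional equation
\[
K(x,y,t)\,xy\,Q(x,y,t)=xy-A(x,t)\,Q(x,0,t)-B(y,t)\,Q(0,y,t)+c(t)\,Q(0,0,t),
\]
where $K(x,y,t)=1-tS(x,y)$ and $S(x,y)=\sum_{(i,j)\in\mathcal S}d_{i,j}x^iy^j$ is the step polynomial. The group $G$ introduced in Section~\ref{secnot} acts birationally on $(x,y)$ while fixing $K$, its two generators being the involutions that interchange the two roots in $x$, respectively in $y$, of the kernel seen as a quadratic polynomial.

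Next I would form the signed orbit sum: apply every $g\in G$ to the functional equation and sum with sign $(-1)^{\ell(g)}$ according to the parity of the word length of $g$ in the generators. Since $K$ is $G$-invariant, and since each generator fixes one of the boundary unknowns (and both fix $c(t)\,Q(0,0,t)$), the alternating sum telescopes and all boundary terms cancel. When $G$ is finite the surviving identity reads, in the generic (non-singular) case where the orbit sum below is non-zero,
\[
xy\,Q(x,y,t)=[x^{>0}y^{>0}]\,\frac{1}{K(x,y,t)}\sum_{g\in G}(-1)^{\ell(g)}\,g(xy),
\]
where $[x^{>0}y^{>0}]$ extracts the monomials with strictly positive exponents in both $x$ and $y$.

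The final step is a closure argument. The right-hand side is a positive-part extraction (a section, close to a diagonal) of a \emph{rational} function of $(x,y,t)$, hence of a function that is D-finite in the three variables jointly. By the stability of D-finiteness under such extractions (Lipshitz), the series $Q(x,y,t)$ is then D-finite in $x$, in $y$, and---this is the new point---in $t$, with coefficients in $\C(x,y,t)$. For the finitely many weighted models whose orbit sum vanishes I would replace the full orbit sum by a \emph{half}-orbit sum, or equivalently by the rational decoupling invariants furnished by the elliptic parametrisation of the curve $K=0$; this yields an algebraic equation for $Q$, and algebraicity gives D-finiteness in each variable a fortiori.

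The hard part is twofold. First, one must ensure the extraction genuinely transfers D-finiteness to the variable $t$ and not merely to $x,y$: this forces one to keep the rational function above jointly rational in all three variables and to perform the section only in $x,y$, so that $t$ survives as a true differential variable. Second, the vanishing-orbit-sum cases must be handled uniformly, and this is precisely where the elliptic-function viewpoint of the paper is needed, both to manufacture the decoupling invariants that replace the orbit sum and to control the dependence of the curve $E_t$ on $t$ through its periods. In the present weighted generality, verifying that these orbit-sum and invariant constructions survive the introduction of arbitrary weights $d_{i,j}$ is the genuinely delicate point.
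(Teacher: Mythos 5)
Your plan is the classical orbit-sum route of \cite{BMM}, and it breaks down at precisely the two points that motivated the paper's elliptic approach. First, the extraction identity
\begin{equation*}
xy\,Q(x,y,t)=[x^{>0}y^{>0}]\,\frac{\mathcal O(x,y)}{1-tS(x,y)}
\end{equation*}
does not follow from the telescoping cancellation alone. Cancellation only gives $\sum_{g}(-1)^{\ell(g)}g(xy)\,Q(g(x),g(y),t)=\mathcal O(x,y)/(1-tS(x,y))$; one must still prove that no non-identity term $g(xy)\,Q(g(x),g(y),t)$ contributes a monomial $x^iy^j$ with $i,j\geq 1$, and indeed that these substituted series have a coherent formal expansion at all. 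In \cite[Prop.~8]{BMM} this is established by inspecting the explicit orbits of the $23$ unweighted models, where the group acts essentially by Laurent-monomial substitutions. In the weighted setting of this theorem the generators are genuinely rational, $\iota_1(x,y)=\bigl(x,\tfrac{A_{-1}(x)}{A_1(x)y}\bigr)$, and --- as the paper emphasises --- the weights producing groups of order $10$ and $12$ are not even classified, so there is no list of orbits to inspect; you give no general argument replacing the inspection. Without that step, Lipshitz's closure theorem has nothing to apply to.

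Second, your treatment of the vanishing orbit sum is both factually off and circular. In the weighted setting these are not ``finitely many models'' but continuous families (for instance the last two models of Figure~\ref{fig:step_sets}, whose orbit sums vanish for all admissible weights), and the half-orbit-sum device of \cite[Sec.~6]{BMM} is known to fail even in the unweighted case for Gessel's walk --- which is exactly why \cite{BMM} covers only $22$ of the $23$ models and Gessel's model required \cite{Bostan10}. Asserting that decoupling invariants ``yield an algebraic equation'' is asserting Theorem~\ref{thm1}, whose proof is the content of Section~\ref{secalg} and itself rests on the elliptic machinery (Theorems~\ref{thm2} and~\ref{thm3}); it cannot be imported as a black box into a proof of Theorem~\ref{thm:main_section_DF} that was meant to bypass that machinery. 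By contrast, the paper proves the theorem uniformly in both cases: it lifts $K(x,0,t)Q(x,0,t)$ to the universal cover, decomposes $r_x=\mathcal O_x\phi+\psi$ with $\phi$ built from the Weierstrass zeta function, and applies the pole/principal-part criterion of Theorem~\ref{thm2}, so that no classification of the models and no case split on the orbit sum is needed.
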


Theorem~\ref{thm:main_section_DF} is proved in Section~\ref{sec:DF_case}. In the unweighted case it is known \cite[Sec.~3]{BMM} that there are exactly $23$ relevant small step quadrant walk models with finite group. Theorem~\ref{thm:main_section_DF} is proved in \cite{BMM} for $22$ of these $23$ models. For the remaining model, known as Gessel's walk, Theorem~\ref{thm:main_section_DF} is proved in \cite{Bostan10} (note that Gessel's model is the unweighted variant of the third example in Figure~\ref{fig:step_sets}). See \cite{Pech} for closely related results. A weaker version of Theorem~\ref{thm:main_section_DF}, when $t$ is fixed in the unit interval $(0,1)$, is proved in \cite{Fayolle-Raschel-10}. Back to the weighted case, a weaker version of Theorem~\ref{thm:main_section_DF} is given in \cite[Thm~4.1]{dreyfus2019differential}. It is proved that if the group is finite, then for any fixed $t\in(0,1)$, $Q(x,y,t)$ satisfies a linear differential equation in $\mathbb C(x,y)$ in each of the two variables $x$ and $y$. In this sense, our Theorem~\ref{thm:main_section_DF} is a refinement of \cite[Thm~4.1]{dreyfus2019differential}, including the last time variable $t$. As we could mention above, there is no exhaustive description of the parameters $d_{i,j}$ that leads to a finite group. In this sense, Theorem~\ref{thm:main_section_DF} holds independently of an explicit description of the model, the finite group hypothesis is sufficient.

Our next result shows that in the infinite group case the generating function does not satisfy any differential equation.
\begin{thm}\label{thm4}
Assume that the group is infinite. Then $Q(x,y,t)$ does not satisfy any (non-trivial)\ linear differential equation with coefficients in $\C(x,y,t)$, in any of its variables.  
\end{thm}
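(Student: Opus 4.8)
The plan is to prove the contrapositive: assuming $Q(x,y,t)$ satisfies a non-trivial linear differential equation in one of its variables, we will deduce that the group is finite. By the symmetry between $x$ and $y$ and by Theorem~\ref{thm:equi_cond}'s structure (which we are in fact establishing), it suffices to treat each variable, but the conceptually new case is the variable $t$; the cases of $x$ and $y$ can be reduced to the fixed-$t$ results already available in \cite{KuRa12,EP-22} by a specialisation argument. So the heart of the matter is: if $Q(x,y,t)$ is D-finite in $t$, then the group is finite.

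First I would pass from the full generating function to its diagonal sections or to a single-variable slice on which the elliptic-function machinery of the paper operates. The key object is the kernel $K(x,y,t)=1-t\sum_{(i,j)\in\mathcal S}d_{i,j}x^iy^j$ and the associated elliptic curve $\{K=0\}$; for fixed generic $t$ this curve has genus $1$, and the group of the walk acts as a group of automorphisms generated by the two birational involutions (the $x$- and $y$-switches). The standard dictionary identifies the group's order with the order of a translation $\tau$ on the elliptic curve, so that the group is finite if and only if $\tau$ is a torsion point, equivalently if and only if a certain ratio of elliptic periods $\omega_3/\omega_2$ is rational (or $\tau$ corresponds to a rational multiple of a period). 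The plan is to show that D-finiteness of $Q$ in $t$ forces this torsion/rationality condition.

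The mechanism I would use is a transfer of regularity: the D-finiteness of $Q$ in $t$ constrains the nature of the boundary functions $Q(x,0,t)$ and $Q(0,y,t)$ as functions of $t$, and through the functional equation coming from the kernel these are linked to the behaviour of $\tau$ as $t$ varies. Concretely, I would parametrise the curve uniformly in $t$ (so the periods $\omega_1(t),\omega_2(t),\omega_3(t)$ become explicit functions of $t$, typically via complete elliptic integrals / hypergeometric functions), and analyse how a D-finite $Q$ would impose a D-finite, hence rather rigid, dependence on $t$. The infinite-group hypothesis means $\omega_3/\omega_2$ is irrational, and one expects that when this ratio is irrational it in fact varies non-trivially (transcendentally) with $t$, producing an essential singularity or a non-D-finite accumulation of singularities in the $t$-variable that no linear differential equation with rational-function coefficients can accommodate. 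The contradiction would then come from confronting the finitely many singularities allowed by a differential equation against the infinitely many constraints forced by the orbit of the infinite group.

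The main obstacle I anticipate is precisely this last step: rigorously showing that the infinite-group (irrational period-ratio) case is incompatible with D-finiteness in $t$, rather than merely in $x$ or $y$. For fixed $t$ the infinite-group obstruction is by now well understood (the orbit of a pole under an infinite group produces infinitely many singularities on a fixed curve, ruling out algebraicity and D-finiteness), but letting $t$ vary means the curve itself moves, and one must control the singularity structure of $Q$ as a function of $t$ uniformly. I expect the decisive input to be a careful study of the monodromy and of the analytic continuation of the uniformising parameter in $t$, showing that irrationality of the period ratio persists on a set with an accumulation point (so that it cannot be ``repaired'' at special values of $t$), and then invoking the standard fact that a D-finite function has only finitely many singularities to reach the contradiction. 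Making the dependence on $t$ of the elliptic data explicit enough to pin down this accumulation of singularities is, I expect, where the genuine work lies.
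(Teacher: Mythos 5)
Your skeleton (contrapositive, dispose of $x$ and $y$ via the fixed-$t$ results of \cite{KuRa12,EP-22}, isolate the $t$-variable as the real difficulty) matches the paper's structure, but the heart of the theorem --- non-D-finiteness in $t$ --- is left as an expectation in your proposal, and the mechanism you expect to work is not the one that does. You propose to contradict D-finiteness in $t$ by exhibiting ``a non-D-finite accumulation of singularities in the $t$-variable''. The problem is that the infinite-group hypothesis produces infinitely many singularities in the \emph{$x$-direction} (equivalently in $\omega$, along the orbit $b+\ell\omega_3$ for a fixed $t$), not in the $t$-direction: for a fixed $x$, nothing in your argument shows that $t\mapsto Q(x,0,t)$ has more than finitely many singularities, so the ``finitely many singularities of a D-finite function'' principle has nothing to bite on. A bridge is needed between the $\omega$-pole structure (where the infinitude lives) and the $t$-dependence (where the hypothetical ODE lives), and your proposal does not supply one. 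Also, your suggestion that one should show irrationality of $\omega_3/\omega_2$ ``persists'' is in tension with the actual proof, one branch of which deliberately exploits nearby parameters $t_1$ where $\omega_3(t_1)/\omega_2(t_1)$ \emph{is} rational.

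Concretely, the paper's bridge is as follows. First, using the equivalence of differential algebraicity in $x$, $y$, $t$ from \cite[Thm~1.1]{dreyfus2021differential}, one reduces to the D-algebraic case, where \cite[Prop.~3.9]{hardouin2021differentially} gives a decoupling $r_x(\omega,t)=f(x(\omega,t),t)+g(\omega,t)$ with $f$ rational and $g$ elliptic with periods $\omega_1(t),\omega_3(t)$; this (not a bare appeal to the orbit) is what yields infinitely many poles modulo $\omega_2(t_0)\Z$ when $G_{t_0}$ is infinite. Then, assuming a $t$-ODE $\sum_i b_i(x,t)\partial_t^iF=0$, one picks a pole $x_c$ of $F(\cdot,0,t_0)$ with $b_n(x_c,t_0)\neq 0$; Cauchy--Lipschitz applied in $t$ forces the pole to be \emph{frozen}: $F(x_c,0,t)=\infty$ for all $t$, i.e.\ $x(b(t),t)$ is independent of $t$, and likewise along the orbit $b(t)+k\omega_3(t)+j\omega_2(t)$. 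The contradiction is then purely geometric: if $\omega_3/\omega_2$ is non-constant, one passes through a nearby $t_1$ with rational ratio to produce three distinct points in one fundamental domain where $x(\cdot,t_2)$ takes the same value, contradicting that $x$ has degree $2$; if $\omega_3/\omega_2$ is constant (hence irrational), a density argument forces $x(b(t)+\alpha\omega_2(t),t)$ to be independent of $t$ for all $\alpha$, whence $\omega_1(t)/\omega_2(t)$ would be constant, contradicting $\omega_1(t)/\omega_2(t)\to 0$ as $t\to 0$ (explicit elliptic-integral asymptotics). None of these steps --- the reduction to the D-algebraic case, the freezing of poles, or the two-case elliptic-geometry argument --- is present or anticipated in your proposal, so as it stands it has a genuine gap precisely where you located the difficulty.
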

The proof of non-D finiteness in $x$ and $y$ is inspired by the one used in \cite{KuRa12}. On the other hand, the non-D finiteness in the variable $t$ is more subtle and requires a careful study of the rationality of the periods naturally associated with the model. See Section~\ref{sec:infinite_group_case} for more details.

In the next result we give a refinement of Theorem~\ref{thm:main_section_DF} under the additional assumption that the orbit-sum is zero. By definition, denoting the group by $G$ and the signature of a given element $g\in G$ by $\text{sign}(g)$, the orbit-sum of the function $(x,y)\mapsto x y$ is the quantity
\begin{equation}
\label{eq:formal_orbit_sum}
    \mathcal O(x,y) = \sum_{g\in G}\text{sign}(g) g(x y).
\end{equation} 
For example, the orbit-sum is zero for the last two models in Figure~\ref{fig:step_sets}, and non-zero for the first two models.
\begin{thm}
\label{thm1}
Assume that the group of the walk is finite. Then $Q(x,y,t)$ is algebraic over $\C(x,y,t)$ if and only if $\mathcal{O}(x,y)=0$. 
\end{thm}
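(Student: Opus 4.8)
The plan is to prove both directions of the equivalence using the functional equation that $Q(x,y,t)$ satisfies, combined with the theory of invariants already developed in the finite group setting. Recall that the generating function satisfies a kernel equation of the form $K(x,y,t)Q(x,y,t) = \text{(boundary terms in } x \text{ and } y\text{)}$, where $K$ is the kernel associated to the step set and weights. When the group $G$ is finite, applying the group elements to this functional equation and forming the signed sum $\sum_{g\in G}\mathrm{sign}(g)\,g(\cdot)$ causes the unknown bulk term $K\,Q$ to telescope, leaving an identity relating the orbit-sum $\mathcal{O}(x,y)$ to a combination of the known sections $Q(x,0,t)$ and $Q(0,y,t)$. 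This orbit-sum relation is the central object on which everything hinges.

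For the direction $\mathcal{O}(x,y)=0 \Rightarrow$ algebraicity, I would exploit the fact that a vanishing orbit-sum forces the sections to satisfy rational-type relations that can be solved explicitly. Concretely, after multiplying by the kernel and passing to the elliptic curve $\{K(x,y,t)=0\}$, the condition $\mathcal{O}=0$ means the relevant function descends to a genuinely meromorphic function on this curve (rather than a function with a nontrivial additive quasi-period), so its analytic continuation under the group action is single-valued. Since $G$ is finite, the orbit of the relevant point on the elliptic curve is finite, so the function one builds is invariant under a finite-index subgroup of translations on the curve, hence lifts to a rational function of $x$ and $y$; this is precisely the mechanism that makes the sections, and therefore $Q$ itself, algebraic. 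I would follow the standard elliptic-function construction (as in the treatment of the unweighted algebraic models) but phrased uniformly via the invariant of \cite[Lem.~2.4]{hardouin2021differentially}, so that no case-by-case step-set analysis is needed.

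For the converse, $\mathcal{O}(x,y)\neq 0 \Rightarrow Q$ is transcendental, I would argue by contradiction. Suppose $Q$ is algebraic; then by Theorem~\ref{thm:main_section_DF} it is in particular D-finite, and its sections $Q(x,0,t)$ and $Q(0,y,t)$ are algebraic. Feeding algebraic sections back into the orbit-sum identity forces $\mathcal{O}(x,y)$, pulled back to the elliptic curve, to be an algebraic function that is additively quasi-periodic with a nonzero period along the group translation. The key point is that a nonzero additive quasi-period obstructs algebraicity: on the elliptic curve a function with a genuinely nonzero additive increment under a translation of infinite order, or with incompatible period behaviour, cannot be algebraic. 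Here the finiteness of the group is what keeps $Q$ itself D-finite, while the nonvanishing of $\mathcal{O}$ is exactly the obstruction to the extra rationality, yielding a contradiction with algebraicity.

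The main obstacle I anticipate is the analytic/arithmetic heart of the converse: making precise why a nonzero orbit-sum produces an additive term on the elliptic curve that is incompatible with algebraicity, uniformly across all weighted finite-group models and including the dependence on $t$. This requires controlling the behaviour of the uniformising elliptic functions as $t$ varies, and showing that the quasi-period associated to $\mathcal{O}$ does not degenerate to a rational multiple of a lattice period in a way that would permit an algebraic solution. I expect this to be where the elliptic-function machinery of the paper does the real work, since the naive orbit-sum manipulation only yields D-finiteness, and the sharper algebraic-versus-transcendental dichotomy demands the finer period analysis that distinguishes this unified approach from the earlier case-by-case arguments.
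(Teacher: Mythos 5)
Your overall frame (pass to the uniformizing $\omega$-plane, use the quasi-periodicity relation $r_x(\omega+\ell\omega_3,t)=r_x(\omega,t)+\mathcal{O}_x(\omega,t)$, exploit finiteness of the group) matches the paper's, but there is a genuine gap in the direction $\mathcal{O}=0\Rightarrow$ algebraic, which is where the paper's real work lies. When the orbit-sum vanishes, the continued section $r_x$ becomes elliptic with periods $(\omega_1(t),k\omega_2(t))$, a \emph{sublattice} of the lattice of $\Etproj$; so it is not a rational function of $(x,y)$ as you claim, only algebraic of degree at most $k$ over $\C(x,y)$ for each fixed $t$. More importantly, this fixed-$t$ statement is exactly the already-known result of \cite[Thm~4.1]{dreyfus2019differential}, and it does not yield algebraicity over $\C(x,y,t)$, which is what Theorem~\ref{thm1} asserts. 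To control the variable $t$ one must show that the Weierstrass function $\wp^{(1,k)}$ attached to the sublattice is algebraic over $\cx=\C(x(\omega,t),t)$ and that its invariants $g_2^{(1,k)},g_3^{(1,k)}$ are algebraic over $\C(t)$ (Theorem~\ref{thm3}, proved via the transformation theory of Lemmas~\ref{lem4}--\ref{lem8}), and then that the coefficients, as functions of $t$, in the representation of $r_x$ as a rational function of $\wp^{(1,k)},\partial_\omega\wp^{(1,k)}$ are themselves algebraic over $\C(t)$. The paper achieves the latter with Theorem~\ref{thm2}, by verifying that the poles of $r_x$ lie in $X_t$, that its principal parts have coefficients algebraic over $\C(t)$, and that $r_x$ takes an algebraic value at one point $a(t)\in X_t$ (Lemmas~\ref{lem1:1} and~\ref{lem1:2}). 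Your proposal acknowledges a $t$-uniformity obstacle but offers no mechanism to overcome it, and it locates that obstacle in the converse direction, where in fact it does not arise.

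Two further points. In the converse direction, the quasi-periodic object is the section $r_x$, not $\mathcal{O}$: the coherent version of your contradiction argument is that an algebraic $r_x$ has finitely many branches over $\cx$, whereas iterating $r_x(\omega+n\ell\omega_3,t)=r_x(\omega,t)+n\,\mathcal{O}_x(\omega,t)$ with $\mathcal{O}_x\neq 0$ produces infinitely many values above the same point, since $\ell\omega_3=k\omega_2$ is a full lattice period. (The paper sidesteps this entirely by citing the fixed-$t$ equivalence of \cite[Thm~4.1]{dreyfus2019differential}.) Finally, in both directions you need to pass between the formal orbit-sum $\mathcal{O}(x,y)$ of \eqref{eq:formal_orbit_sum}, which is what the theorem is about, and its restriction $\mathcal{O}_x(\omega,t)$ to the curves: the functional equation only controls the latter, so concluding $\mathcal{O}(x,y)\equiv 0$ on $\C^2$ requires the argument of Lemma~\ref{lem9} (a rational function vanishing on $\Etproj$ for all $t\in(0,1)$ vanishes identically); this step is absent from your proposal.
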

Note that by Theorem \ref{thm:equi_cond} when the group is infinite, the series $Q(x,y,t)$ is transcendental.
Theorem~\ref{thm1} will be shown in Section~\ref{secalg}.  
In the unweighted case, Theorem~\ref{thm1} follows from \cite{BMM,Bostan10,BeBMRa-21}. See also \cite{Fayolle-Raschel-10}. In the weighted case, a weak variant of Theorem~\ref{thm1} (for fixed $t$)\ is proved in \cite[Thm~4.1]{dreyfus2019differential}. In principle, Tutte's invariants methodology developed in \cite{BeBMRa-21} in the unweighted case also applies to weighted models, and should lead to algebraic results in three variables; however, to our knowledge this has not been worked out. 

In order to present our last main result (Theorem~\ref{thm2}), we need to comment on general techniques to approach Theorems~\ref{thm:main_section_DF} and~\ref{thm1}. Looking at the literature, the techniques used are mostly quite different in the D-finite case (Theorem~\ref{thm:main_section_DF})\ and the algebraic case (Theorem~\ref{thm1}). For example, in the D-finite case, the reference \cite{BMM} uses explicit expressions for the generating function as positive parts of rational functions (see \cite[Prop.~8]{BMM}, see also \cite{Courtiel}), from which the D-finiteness follows from general theoretical arguments. On the other hand, again in \cite{BMM}, explicit algebraic expressions are obtained from a subtle half-orbit-sum technique (see \cite[Sec.~6]{BMM}). Similarly, Tutte's invariant method of \cite{BeBMRa-21} applies only in the zero-orbit-sum case, while the techniques of \cite{Pech} leading to hypergeometric expressions for $Q(x,y,t)$ apply only in the D-finite case; and so on. 

Remarkably, our techniques for proving Theorems~\ref{thm:main_section_DF} and~\ref{thm1} are similar and rely on a two-step approach. First, we will use classical results \cite{FIM17,new,dreyfus2019differential,dreyfus2021differential} which give expressions for the generating functions $Q(x,y,t)$ in terms of elliptic functions (after lifting to the complex plane). Second, we will use a single technical result about elliptic functions, given as Theorem~\ref{thm2}. Although it is a little too technical to be presented in this introduction, we can give the intuitive idea. Theorem~\ref{thm2} can be interpreted as a refinement of one of the most classical statements in elliptic function theory, which asserts that any elliptic function is determined by its poles and the principal parts at its poles, up to some additive constant. More specifically, we will show that controlling the arithmetic nature (i.e., algebraicity or D-finiteness)\ of the poles and principal parts of a given elliptic function yields a global control of the arithmetic nature of the function itself.

It is worth mentioning that there is a result similar to Theorem~\ref{thm:equi_cond} for D-algebraicity, see \cite[Thm~1.1]{dreyfus2021differential}, \cite[Thm~2]{dreyfus2019length}, and \cite[Thm~3.8]{
hardouin2021differentially} for more details.
It is proved in \cite{dreyfus2021differential,dreyfus2019length,hardouin2021differentially} that the following are equivalent:
\begin{itemize}
 \item There exists a decoupling function.
 \item The series $Q(x,y,t)$ satisfies an algebraic differential equation with coefficients in $\C(x,y,t)$, in the variable $x$.
  \item The series $Q(x,y,t)$ satisfies an algebraic differential equation with coefficients in $\C(x,y,t)$, in the variable  $y$.
   \item The series $Q(x,y,t)$ satisfies an algebraic differential equation with coefficients in $\C(x,y,t)$, in the variable  $t$.
\end{itemize}
\section{Preliminary results on the kernel curve and generating functions}
\label{secnot}

\subsection*{Functional equation}
In this section we recall some well-known facts about the generating function counting weighted walks in the quarter plane. We follow the presentation of \cite{dreyfus2018nature}; for more details and precise references we refer to \cite{dreyfus2018nature,dreyfus2021differential}. 

Let $\K=\Q(d_{i,j})$ be the field generated by the weights $d_{i,j}$. The kernel of the walk is the bivariate polynomial defined by 
$
K(x,y,t):=xy (1-t S(x,y)),
$
where $S(x,y)$ denotes the jump polynomial
\begin{equation}
\label{eq:def_A_i}
S(x,y) =\sum_{-1\leq i,j\leq 1} d_{i,j}x^i y^j
 =\frac{A_{-1}(x)}{y} +A_{0}(x)+ A_{1}(x) y
 = \frac{B_{-1}(y)}{x} +B_{0}(y)+ B_{1}(y) x,
\end{equation}
with $A_{i}(x) \in x^{-1}\K[x]$ and $B_{i}(y) \in y^{-1}\K[y]$. 
The kernel plays an important role in the so-called kernel method. 
Define further the sectional generating functions
\begin{equation}
\label{eq:def_F1_F2}
     F^{1}(x,t):= K(x,0,t)Q(x,0,t) \quad\text{and}\quad F^{2}(y, t):= K(0,y, t)Q(0,y, t).
\end{equation}
As first proved in \cite{BMM} in the unweighted framework, and then used e.g.\ in \cite{dreyfus2019differential} in the weighted case, the following functional equation holds:
\begin{lem}
\label{lem11}
The generating function $Q(x,y,t)$  satisfies the functional equation
\begin{equation}
\label{eq:funcequ}
     K(x,y,t)Q(x,y,t)=F^{1}(x,t) +F^{2}(y,t)-K(0,0,t) Q(0,0,t)+xy.
\end{equation}
\end{lem}

\subsection*{Kernel curve}
By definition, the kernel curve $E_{t}$ is the complex affine algebraic curve defined by
\begin{equation*}
   E_t = \bigl\{(x,y) \in \C \times \C \ \vert \ K(x,y,t) = 0\bigr\}.
\end{equation*}
We shall consider a compactification of this curve. We let $\P1(\C)$ be the complex projective line, that is the quotient of $(\C \times \C) \setminus \{(0,0)\}$ by the equivalence relation 
\begin{equation*}
   (x_{0},x_{1}) \sim (x_{0}',x_{1}') \quad \Longleftrightarrow \quad\exists \lambda \in \C^{*},\  (x_{0}',x_{1}') = \lambda (x_{0},x_{1}). 
\end{equation*}
The equivalence class of $(x_{0},x_{1}) \in (\C \times \C) \setminus \{(0,0)\}$ is denoted by $[x_{0}:x_{1}] \in \P1(\C)$. The map 
$
x \mapsto  [x:1]
$ 
embeds $\C$ inside $\P1(\C)$. It is not surjective: its image is $\P1(\C) \setminus \{[1:0]\}$; the missing point $[1:0]$  is usually denoted by $\infty$. 
  Now, we embed $E_{t}$  inside $\P1(\C) \times \P1(\C)$ via  ${(x,y) \mapsto ([x:1],[y:1])}$. 
  
The  kernel curve $\Etproj$ is the closure of this embedding of $E_{t}$.  In other words, $\Etproj $ is the algebraic curve defined by
\begin{equation*}
   \Etproj = \{([x_{0}:x_{1}],[y_{0}:y_{1}]) \in \P1(\C) \times \P1(\C) \ \vert \ \overline{K}(x_0,x_1,y_0,y_1,t) = 0\},
\end{equation*}
where $\overline{K}(x_0,x_1,y_0,y_1,t)$ is the following degree-two homogeneous polynomial
\begin{equation*}
\overline{K}(x_0,x_1,y_0,y_1,t)=x_1^2y_1^2K\left(\frac{x_0}{x_1},\frac{y_0}{y_1},t\right)= x_0x_1y_0y_1 -t \sum_{0\leq i,j\leq 2} d_{i-1,j-1} x_0^{i} x_1^{2-i}y_0^j y_1^{2-j}. 
 \end{equation*}
To simplify our notation, we will denote by $\overline{K}(x,y,t)$ the polynomial $\overline{K}(x_0,x_1,y_0,y_1,t)$. We have basically three different options:
\begin{itemize}
\item The kernel curve is degenerate, in the sense of \cite[Def.~2.2]{dreyfus2021differential}, and in this case the generating series is algebraic over $\C(x,y,t)$, see \cite[Lem.~2.4]{dreyfus2021differential}. 
\item The kernel curve has genus zero and either the generating series  is algebraic over $\C(x,y,t)$, or is differentially transcendental in all its variables, see \cite[Lem.~2.6]{dreyfus2021differential}. 
\item $\Etproj$ is an elliptic curve. 
\end{itemize}
So we only need to focus on the situation where $\Etproj$ is an elliptic curve.  Necessary and sufficient conditions on $d_{i,j}$ to ensure that $\Etproj$ is an elliptic curve are given in \cite[Prop.~2.5]{dreyfus2021differential}. 

In this situation, see \cite[Prop.~2.5]{dreyfus2021differential}, $A_{1}(x),A_{-1}(x),B_{1}(y),B_{-1}(y)$ are not identically zero.

\subsection*{Group of the walk}
For a fixed value of $x$, consider the two roots  $y_{-},y_{+}$ of 
$y\mapsto \overline{K}(x,y,t)$.
Then,  $(x,y_{\pm})\in\Etproj$.  We consider the involutive function
\begin{equation*} 
   \iota_{1} : \Etproj \rightarrow \Etproj
\end{equation*} 
that sends $(x,y_{-})$ to $(x,y_{+})$.  Similarly, we define $\iota_2$ that permutes the $x$-roots.  
Let us finally define
\begin{equation*}
     \sigma=\iota_2 \circ \iota_1. 
\end{equation*}
Let $G$ be the group generated by the involutions $\iota_{1}$ and $\iota_{2}$, and let $G_{t}$ be the specialization of this group for a fixed value of $0<t<1$. 
 
In the unweighted case,  the algebraic nature of the generating  series depends on whether $\sigma$ has finite or infinite order, as implied by a combination of several works \cite{MR2553316,BMM,Bostan10,KuRa12}. More precisely, $G$ is finite if and only if the generating series is D-finite, i.e., it satisfies a non-trivial linear differential equation with coefficients in $\C(x,y,t)$ in each of its three variables. Also note that if $G$ is infinite, then $G_{t}$ can be either finite or infinite. 
 
As in Section~\ref{sec:intro}, see \eqref{eq:formal_orbit_sum}, we define the orbit-sum
    $\mathcal O(x,y) = \sum_{g\in G}\text{sign}(g) g(x y)$,
where in the above, $\text{sign}(g) = 1$ (resp.\ $-1$)\ if the number of elements $\iota_1,\iota_2$ used to write $g$ is even (resp.\ odd).

\subsection*{Parametrization} 
The elliptic curve $\Etproj$ is biholomorphic to $\C/\bigl(\omega_1 (t)\Z  + \omega_2 (t)\Z\bigr)$ for some lattice $\omega_1 (t)\Z+ \omega_2 (t)\Z$ of $\C$  via the $\bigl(\omega_1 (t)\Z + \omega_2 (t)\Z\bigr)$-periodic holomorphic map 
\begin{equation*}
\begin{array}{llll}
\Lambda :& \C& \rightarrow &\overline{E}_t\\
 &\omega &\mapsto& \bigl(x(\omega,t), y(\omega,t)\bigr),
\end{array}
\end{equation*}
where $x$ and $y$ are rational functions of $\wp$ and its derivative $\partial_{\omega}\wp$, and $\wp$ is the Weierstrass function associated with the lattice $\omega_1 (t)\Z + \omega_2 (t)\Z$:
\begin{equation*}
     \wp(\omega ,t)=\frac{1}{\omega^{2}}+ \sum_{(\ell_{1},\ell_{2}) \in \Z^{2}\setminus \{(0,0)\}} \left(\frac{1}{\bigl(\omega +\ell_{1}\omega_{1}(t)+\ell_{2}\omega_{2}(t)\bigr)^{2}} -\frac{1}{\bigl(\ell_{1}\omega_{1}(t)+\ell_{2}\omega_{2}(t)\bigr)^{2}}\right).
\end{equation*}     
The maps $\iota_{1}$, $\iota_{2}$ and $\sigma$ may be lifted to the $\omega$-plane $\C$. We denote these lifts by  $\iup_{1}$, $\iup_{2}$ and $\widetilde{\sigma}$, respectively. So we have the commutative diagrams 
\begin{equation*}
\xymatrix{
    \Etproj  \ar@{->}[r]^{\iota_k} & \Etproj  \\
    \C \ar@{->}[u]^\Lambda \ar@{->}[r]_{\iup_k} & \C \ar@{->}[u]_\Lambda 
  }
  \qquad\qquad\qquad
  \xymatrix{
    \Etproj  \ar@{->}[r]^{\sigma} & \Etproj  \\
\C \ar@{->}[u]^\Lambda \ar@{->}[r]_{\widetilde{\sigma}} & \C \ar@{->}[u]_\Lambda 
  }  
\end{equation*}
The lifted maps are of the form 
\begin{equation*}
   \iup_{1}(\omega)=-\omega, \quad\iup_{2}(\omega)=-\omega+\omega_{3}\quad \text{and} \quad\widetilde{\sigma}(\omega)=\omega+\omega_{3},
\end{equation*}
for some $\omega_{3}(t)\in (0,\omega_2(t))$. 
We now give explicit expressions for the periods $\omega_1(t),\omega_2(t)$,  $\omega_{3}(t)$ and the coordinates $x(\omega,t),y(\omega,t)$. 
For $[x_0:x_1]$ in $\P1(\C)$, we denote by ${\Delta_{1}([x_0:x_1],t)}$ the discriminant of the degree-two homogeneous polynomial $y \mapsto \overline{K}(x_0,x_1,y,1,t)$.
 Let us write 
\begin{equation*}
   \Delta_{1}([x_0:x_1],t)= \displaystyle\sum_{i=0}^{4}\alpha_{i}(t)x_{0}^{i}x_{1}^{4-i}.
\end{equation*}
The discriminant $\Delta_{1}([x_0:x_1],t)$ admits four distinct continuous real roots $a_1(t),\dots,a_{4}(t)$. They are numbered so that the cycle of $\mathbb P_1(\R)$ from $-1$ to $\infty$ and from $-\infty$ to $-1$ crosses them in the order $a_1(t),\ldots , a_4(t)$. One of them, say $a(t)$, is different from $[1:0]$ for all $t\in (0,1)$.  Since the discriminant has coefficients in $\K(t)$,  we deduce that $a(t)$ (and the other $a_i(t)$ as well)\ are algebraic over $\C(t)$.

Similarly, we denote by  $b(t)$ a  continuous real root of $\Delta_{2}([y_0:y_1],t)$,  the discriminant $x \mapsto \overline{K}(x,1,y_0,y_1,t)$, that is not $[1:0]$.  For similar reasons,  $b(t)$ is algebraic over $\C(t)$. 
\begin{prop}
\label{prop:uniformization}
For $i=1,2$, let us set $D_i(\star ,t):=\Delta_{i}([\star:1],t)$. An explicit expression for the periods is given by the elliptic integrals
\begin{equation*}
     \omega_{1} (t)=\mathbf{i}\int_{a_{3}(t)}^{a_{4}(t)} \frac{dx}{\sqrt{\vert D_1(x,t)\vert}}\in \mathbf{i}\R_{>0}\quad\text{and}\quad
\omega_{2}(t)=\int_{a_{4}(t)}^{a_{1}(t)} \frac{dx}{\sqrt{D_1(x,t)}}\in \R_{>0}.
\end{equation*}
An explicit expression of the map $
\Lambda(\omega,t)=(x(\omega,t),y(\omega,t))$
is given by 
\begin{itemize}
\item
 $x(\omega,t)=\left[a(t)+\frac{D'_{1}(a(t),t)}{\wp (\omega,t)-\frac{1}{6}D''_{1}(a(t),t)}:1\right]$;
\vspace{0.2cm}
\item 
$y(\omega,t)=\left[b(t)+\frac{D'_{2}(b(t),t)}{\wp (\omega-\omega_3 (t)/2 ,t)-\frac{1}{6}D''_{2}(b(t),t)}:1\right]$.\vspace{0.2cm}
\end{itemize}
An explicit expression of $\omega_3(t)$  is given by
\begin{equation*}
   \omega_{3}(t)=\int_{x_{\pm}(b_1(t),t)}^{a_1(t)} \frac{dx}{\sqrt{D_1(x,t)}}\in (0,\omega_{2}(t)),
\end{equation*}
and $x_{\pm}(y,t)$ are the two roots of $\overline{K}(x_{\pm}(y,t),y,t)=0$. 
\end{prop}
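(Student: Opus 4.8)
The plan is to treat $\Etproj$ as a double cover of the projective $x$-line and to reduce the resulting elliptic integral to Weierstrass normal form; the three periods and the two coordinate maps will then be read off from this reduction. First I would exploit the projection $\Etproj\to\P1(\C)$, $(x,y)\mapsto x$, which is two-to-one with branch points exactly at the zeros $a_1(t),\dots,a_4(t)$ of the discriminant $\Delta_1$, since there the two roots $y_{\pm}$ of $y\mapsto\overline{K}(x,y,t)$ collide. Completing the square in $y$ shows that $\Etproj$ is birational to the genus-one curve $w^2=D_1(x,t)$, on which the holomorphic differential is $\frac{dx}{\sqrt{D_1(x,t)}}$ up to scalar. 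Normalising it as $\frac12\frac{dx}{\sqrt{D_1}}$ and integrating over the two cycles encircling the pairs $\{a_3,a_4\}$ and $\{a_4,a_1\}$ yields the periods: because each cycle wraps a branch cut it contributes twice the corresponding segment integral, which cancels the factor $\frac12$. Using that $D_1$ is a real quartic whose sign alternates between consecutive simple real roots, one checks $D_1<0$ on $(a_3,a_4)$ and $D_1>0$ on $(a_4,a_1)$, giving $\omega_1(t)=\mathbf{i}\int_{a_3}^{a_4}\frac{dx}{\sqrt{|D_1|}}\in\mathbf{i}\R_{>0}$ and $\omega_2(t)=\int_{a_4}^{a_1}\frac{dx}{\sqrt{D_1}}\in\R_{>0}$.

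Next I would establish the formula for $x(\omega,t)$ by the classical reduction of $\int dx/\sqrt{\text{quartic}}$ to Weierstrass form. Writing $a=a(t)$ for the distinguished finite root of $D_1$ and Taylor expanding $D_1$ at $a$ (so that $D_1(a)=0$), the substitution $\wp-\frac16 D_1''(a)=\frac{D_1'(a)}{x-a}$ turns the normalised relation $(\partial_\omega x)^2=4D_1(x)$ into $(\partial_\omega\wp)^2=4\wp^3-g_2\wp-g_3$. The additive constant $\frac16 D_1''(a)$ is precisely what makes the $\wp^2$-term vanish, and the numerator $D_1'(a)$ is forced by matching the local behaviour $x-a\sim D_1'(a)\,\omega^2$ near the branch point $\omega=0\leftrightarrow x=a$ (where $\wp\sim\omega^{-2}$); reading off $g_2,g_3$ from the remaining coefficients identifies the lattice with the one from the previous step. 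Inverting the substitution gives the stated expression for $x(\omega,t)$.

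For $y(\omega,t)$ I would invoke the symmetry of the construction under $x\leftrightarrow y$: the same reduction applied to the projection $(x,y)\mapsto y$, with discriminant $D_2$ and root $b(t)$, produces an identical formula in a shifted variable. The shift is dictated by the group action: since $y$ is invariant under $\iota_2$ and $\iup_2(\omega)=-\omega+\omega_3$, the lift $y(\omega)$ must be even about $\omega_3/2$, hence a function of $\wp(\omega-\omega_3(t)/2,t)$, which yields the claimed expression. Finally, $\omega_3(t)$ is obtained by locating the fixed points of $\iup_2$, namely the branch points of the $y$-projection; these sit at $\omega\equiv\omega_3/2$ modulo the half-lattice and correspond to the points $(x_{\pm}(b_1(t),t),b_1(t))$. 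Integrating the holomorphic differential along a real path from such a point to the reference $x$-branch point $a_1(t)$ then produces $\omega_3(t)=\int_{x_{\pm}(b_1(t),t)}^{a_1(t)}\frac{dx}{\sqrt{D_1}}$, and the sign analysis above places it in $(0,\omega_2(t))$.

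The main obstacle is bookkeeping rather than conceptual. One must fix the normalisation of the differential consistently, match each half-period and each shifted half-period to the correctly labelled root $a_i$ or $b_i$, and track signs and integration contours so that $\omega_1\in\mathbf{i}\R_{>0}$, $\omega_2\in\R_{>0}$ and $\omega_3\in(0,\omega_2)$ emerge with the asserted signs, while simultaneously verifying that the invariants $g_2,g_3$ coming out of the reduction reproduce exactly the lattice whose periods are the integrals of the first step. These verifications are entirely classical (compare \cite{FIM17,dreyfus2021differential}), but carrying them through without a sign error is where the care is needed.
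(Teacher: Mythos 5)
The paper itself offers no proof of Proposition~\ref{prop:uniformization}: it is stated as recalled material, with Section~\ref{secnot} explicitly deferring to \cite{FIM17} and \cite{dreyfus2018nature,dreyfus2021differential}, so the only comparison available is with the arguments in those sources. Your sketch follows exactly that classical route---$\Etproj$ as a double cover of the $x$-line branched at the zeros of $\Delta_1$, reduction of $dx/\sqrt{D_1}$ to Weierstrass normal form via the substitution $\wp-\tfrac16 D_1''(a)=D_1'(a)/(x-a)$ with the local matching $x-a\sim D_1'(a)\omega^2$, and the lifted involutions $\iup_1,\iup_2$ to force $y$ to be a M\"obius function of $\wp(\omega-\omega_3/2,t)$ and to locate $\omega_3$---and it is correct in outline. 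The one caveat is that the ``bookkeeping'' you defer is not entirely cosmetic: the stated integral returns $\omega_3$ itself (rather than $\omega_2-\omega_3$ or a sign-reversed translate) only after one fixes which root $a_i$ coincides with $x(0,t)$ and orients the integration path accordingly, and the arc of $\mathbb{P}^1(\R)$ from $a_4$ to $a_1$ generally passes through $\infty$ (indeed one of the $a_i$ may equal $[1:0]$, degenerating the quartic to a cubic), so the sign-alternation argument must be run on $\mathbb{P}^1(\R)$ rather than on $\R$; these labelling and orientation conventions are precisely where the cited proofs spend their care.
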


\subsection*{Analytic continuation of the generating functions}
Let us now focus on the analytic continuation of the generating functions $F^1$ and $F^2$ introduced in \eqref{eq:def_F1_F2}.  Let us fix $t\in (0,1)$.
The generating function $Q(x,y, t)$ converges for $\vert x\vert,\vert y\vert<1$. The projection of this set  inside $\mathbb{P}^{1}(\C)\times \mathbb{P}^{1}(\C)$ has a non-empty intersection with the kernel curve $\Etproj$. In virtue of~\eqref{eq:funcequ}, we then find for $\vert x\vert,\vert y\vert<1$ and $(x,y)\in \Etproj$,
\begin{equation*}
   F^{1}(x, t) +F^{2}(y, t)-K(0,0, t) Q(0,0, t)+xy=0.
\end{equation*}
Since the  series $F^{1}(x, t)$ and  $F^{2}(y, t)$ converge for $\vert x\vert$ and $\vert y\vert<1$ respectively,
we can continue $F^{1}(x, t)$  for $(x,y)\in \Etproj$ and $\vert y\vert<1$ with the formula: 
\begin{equation*}
   F^{1}(x, t) =-F^{2}(y, t)+K(0,0, t) Q(0,0, t)-xy.
\end{equation*}
We continue $F^{2}(y,t)$  for $(x,y)\in \Etproj$ and $\vert x\vert<1$ similarly. 
\begin{lem}[Lemma 27 in \cite{dreyfus2021differential}]
\label{lem:existence_properties_O}
There exists a connected set $\mathcal{O}\subset \C$ such that 
\begin{enumerate}
   \item\label{it:O1}$\Lambda (\mathcal{O})=\{(x,y)\in \Etproj : \vert x\vert<1 \hbox{ or } \vert y\vert<1 \}$;
   \item\label{it:O2}$\widetilde{\sigma}^{-1}(\mathcal{O})\cap \mathcal{O}$ is non-empty;
   \item\label{it:O3}$\displaystyle \bigcup_{\ell\in \Z}\widetilde{\sigma}^{\ell}(\mathcal{O})=\C$.
\end{enumerate}
\end{lem}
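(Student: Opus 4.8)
The plan is to work entirely in the $\omega$-plane through the uniformization $\Lambda$ of Proposition~\ref{prop:uniformization} and to exhibit $\mathcal O$ as a single connected component of $\widetilde{\mathcal D}:=\Lambda^{-1}(\mathcal D)$, where $\mathcal D:=\{(x,y)\in\Etproj:\vert x\vert<1\text{ or }\vert y\vert<1\}$. The two structural inputs are that $x(\omega,t)$ is a real M\"obius function of $\wp(\omega,t)$, hence even and $\bigl(\omega_1\Z+\omega_2\Z\bigr)$-periodic, whereas $y(\omega,t)$ is the same type of function of $\wp(\omega-\omega_3/2,t)$; consequently $\{\vert x\vert<1\}$ is invariant under $\iup_1(\omega)=-\omega$ and $\{\vert y\vert<1\}$ under $\iup_2(\omega)=-\omega+\omega_3$.

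First I would locate the level sets $\{\vert x\vert=1\}$ and $\{\vert y\vert=1\}$. Since the Weierstrass function of the rectangular lattice $\omega_1\Z+\omega_2\Z$ (with $\omega_1$ purely imaginary and $\omega_2$ real)\ is real exactly on the horizontal and vertical half-period lines, $x$ is real there, and a Fayolle--Iasnogorodski--Malyshev-style study of the sign and modulus of $x$ along these lines shows that exactly two of the four branch values of the degree-two map $x\colon\Etproj\to\mathbb{P}^1(\C)$ lie inside the unit disc. By Riemann--Hurwitz this forces $\{\vert x\vert<1\}$ to be a topological annulus wrapping in the $\omega_1$-direction; lifting, $\Lambda^{-1}\bigl(\{\vert x\vert<1\}\bigr)$ is a disjoint union of vertical strips $\{c+k\omega_2-\delta_x<\mathrm{Re}\,\omega<c+k\omega_2+\delta_x\}$ ($k\in\Z$)\ of full imaginary extent, whose centre is, by the $\iup_1$-symmetry, a fixed point of $\iup_1$; I normalise it to $c=0$. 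The identical analysis for $y$, using the shift by $\omega_3/2$ built into $\iup_2$, produces vertical strips of half-width $\delta_y$ centred at $\omega_3/2$.

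I would then assemble $\mathcal O$ and read off the three properties from one width computation. The polydisc $\{\vert x\vert<1,\ \vert y\vert<1\}$ meets $\Etproj$ --- this is exactly the non-empty intersection used above to continue $F^1$ and $F^2$ --- so the $x$-strip and the $y$-strip overlap, which is equivalent to $\delta_x+\delta_y>\omega_3/2$. Their union is then a single vertical strip, in the staggered configuration at hand equal to $\mathcal O=\{-\delta_x<\mathrm{Re}\,\omega<\omega_3/2+\delta_y\}$, of real width $\tfrac{\omega_3}{2}+\delta_x+\delta_y$; it is connected and satisfies $\Lambda(\mathcal O)=\mathcal D$ by construction, which is~\ref{it:O1}. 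Since $\widetilde\sigma^{-1}(\mathcal O)=\mathcal O-\omega_3$, the intersection $\widetilde\sigma^{-1}(\mathcal O)\cap\mathcal O$ is non-empty exactly when this width exceeds $\omega_3$, i.e.\ again when $\delta_x+\delta_y>\tfrac{\omega_3}{2}$, giving~\ref{it:O2}; and the same inequality makes the horizontal translates $\mathcal O+\ell\omega_3$, $\ell\in\Z$, overlap consecutively and cover $\C$, because $\mathcal O$ already has full imaginary extent, which is~\ref{it:O3}.

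The routine parts here are the Riemann--Hurwitz bookkeeping and the final width arithmetic. The main obstacle is the geometric step of the second paragraph: showing that the unit-circle preimages are genuinely \emph{vertical} annuli of full imaginary extent --- a horizontal one would make~\ref{it:O3} fail --- and that each of $\{\vert x\vert<1\}$ and $\{\vert y\vert<1\}$ is connected. This requires pinning down, from the explicit formulas of Proposition~\ref{prop:uniformization} and the real structure of $\wp(\omega,t)$, precisely which branch values lie inside the unit disc and how the curve $\{\vert x\vert=1\}$ threads through the real locus of $\Etproj$.
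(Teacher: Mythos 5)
Your overall route is the standard one: the paper itself contains no proof of this lemma (it is quoted as \cite[Lem.~27]{dreyfus2021differential}), and the proof given there, which goes back to Fayolle--Iasnogorodski--Malyshev, is exactly the picture you describe --- lift $\{\vert x\vert<1\}$ and $\{\vert y\vert<1\}$ to the $\omega$-plane, observe that each lift is a $\omega_2\Z$-family of $\omega_1$-periodic ``vertical'' components, let $\mathcal{O}$ be the union of one component of each chosen so that they intersect (possible because $\Etproj$ meets the unit polydisc), and deduce \ref{it:O1}, \ref{it:O2}, \ref{it:O3} from the relative position of these components and of their $\omega_3$-translates.

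The genuine gap is in the step you dismiss as ``routine width arithmetic''. The components of $\Lambda^{-1}(\{\vert x\vert<1\})$ are \emph{not} straight strips $\{\vert \mathrm{Re}\,\omega-c\vert<\delta_x\}$: they are regions bounded by the curves $\{\vert x(\omega)\vert=1\}$, which are $\omega_1$-periodic but genuinely curved (if they were vertical lines, $\log\vert x\vert$ would be a harmonic function of $\mathrm{Re}\,\omega$ alone and $x$ could not be elliptic). This is fatal for your argument for \ref{it:O3}: with curved boundaries, knowing that $\mathcal{O}$ is connected and has width exceeding $\omega_3$ at \emph{some} height --- and that is all the polydisc argument supplies, since non-empty intersection produces a single common point --- does not imply $\bigcup_{\ell}(\mathcal{O}+\ell\omega_3)=\C$: the two components could overlap near one value of $\mathrm{Im}\,\omega$ and leave a gap at another, and since the translation $\omega\mapsto\omega+\omega_3$ preserves $\mathrm{Im}\,\omega$, such a gap is never swept out. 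What is actually needed is overlap of $\Delta_x$ with $\Delta_y$, and of $\Delta_y$ with $\Delta_x+\omega_3$, at \emph{every} height (equivalently, that $\mathcal{O}$ contains a straight vertical strip of width $>\omega_3$); this uniform statement is the real geometric content of the lemma, and in the references it comes from a detailed study of the curves $\vert x(\omega)\vert=1$, $\vert y(\omega)\vert=1$ and of the interlacing of the branch points, not from a one-height width count. A second, related gap is your unproved ``staggered configuration'': that the $y$-components sit at distance $\omega_3/2$ (and not $\omega_3/2+\omega_2/2$ modulo $\omega_2$) from the $x$-components, and that the $y$-component meeting your chosen $x$-component is the one preserved by $\widetilde{\iota}_2$. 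Both \ref{it:O2} and \ref{it:O3} depend on this; settling it is again the Fayolle--Iasnogorodski--Malyshev-type analysis you defer. (Once the configuration is known, \ref{it:O2} needs no width computation at all: if $\omega_*\in\Delta_x\cap\Delta_y$, then $-\omega_*=\widetilde{\iota}_1(\omega_*)\in\Delta_x\subset\mathcal{O}$ and $\widetilde{\sigma}(-\omega_*)=\widetilde{\iota}_2(\omega_*)\in\Delta_y\subset\mathcal{O}$, so $-\omega_*\in\widetilde{\sigma}^{-1}(\mathcal{O})\cap\mathcal{O}$.)
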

Furthermore, there exist functions $\omega\mapsto r_x(\omega,t)$ and $\omega\mapsto r_y(\omega,t)$ which are meromorphic on the connected domain $\mathcal{O}$ and such that
\begin{equation}
\label{eq:liftings_GF}
   \left\{\begin{array}{rclcl}
   \rx(\omega, t)&=&F^{1}(x(\omega, t),t)& =&K(x(\omega, t),0,t)Q(x(\omega, t),0,t),\smallskip\\
   \ry(\omega, t)&=&F^{2}(y(\omega, t),t)& =&K(0,y(\omega, t),t)Q(0,y(\omega, t),t).
   \end{array}\right.
\end{equation}
With~\ref{it:O2} and~\ref{it:O3} of Lemma~\ref{lem:existence_properties_O}, we extend $\rx(\omega, t)$ and $\ry(\omega, t)$ as meromorphic functions on $\C$, where they satisfy the equations
\begin{align}
     \rx(\omega+\omega_{3}(t),t) & =\rx(\omega,t)+b_{x}(\omega,t) ,\label{eq:omega_3_per_rx} \\
     \rx(\omega+\omega_{1}(t),t)&=  \rx(\omega,t),\label{eq:omega_1_per_rx}    \\ 
     \ry(\omega+\omega_{3}(t),t)   &=\ry(\omega,t)+b_{y}(\omega,t), \label{eq:omega_3_per_ry} \\ 
     \ry(\omega+\omega_{1}(t),t)&=  \ry(\omega,t),\nonumber\\
     \rx (\omega,t)+\ry(\omega,t)&=K(0,0,t)Q(0,0,t)-x(\omega,t)y(\omega,t),\label{eq:lifting_func_eq}
\end{align}
where
\begin{equation}
\label{eq:def_bx_by}
   b_{x}(\omega,t) =y(-\omega,t)\bigl(x(\omega,t)-x(\omega+\omega_{3}(t),t)\bigr) \hbox{ and }b_{y}(\omega,t)=x(\omega,t)\bigl(y(\omega,t)-y(-\omega,t)\bigr).
\end{equation}

\section{Algebraic nature of elliptic functions depending on a parameter}
\label{seccrit}

In elliptic functions theory, it is well known that any elliptic function is determined by its poles and the principal parts at its poles, up to some additive constant, see e.g.\ \cite[Thm~3.14.4]{JoSi-87}. In this section, we will prove a refinement of this result, see Theorem~\ref{thm2} below. More specifically, we will show that controlling the arithmetic nature (i.e., algebraicity, D-finiteness or differential algebraicity)\ of the poles and principal parts of a given elliptic function yields a global control of the arithmetic nature of the function itself. Theorem~\ref{thm2} is not only interesting in itself, it is also, as we shall see, perfectly adapted to the setting of random walks in the quadrant, in the sense that one single statement will capture various situations occurring in the theory (algebraic, D-finite and D-algebraic generating functions).

Let $\mathcal{P}$ be the field of germs of meromorphic Puiseux series at a given point   $(\omega_0,t_0)\in \C \times (0,1)$.  Let
\begin{equation*}
   \cx := \C(x(\omega,t),t).
\end{equation*}
Let $\mathbf{k}$ be an algebraically closed field and $\mathbf{R}$ be a ring such that
 \begin{equation*}
{\cx}\subset\mathbf{k}\subset \mathbf{R}\subset \mathcal{P}. 
\end{equation*}    
   
The associated constant fields are denoted by $\cx_t$, $\mathbf{k}_t$ and $\mathbf{R}_t$:
\begin{equation*}
  \quad \cx_t = \mathbb C(t),\quad \mathbf{k}_t=\{a\in \mathbf{k}: \partial_{\omega} (a)=0 \}\quad \text{and}\quad  \mathbf{R}_t=\{a\in \mathbf{R}: \partial_{\omega} (a)=0 \} . 
\end{equation*}
 The aim of this article is to determine whether certain functions lie in $\mathbf{R}$ in the following situations:
\begin{itemize}
\item The {\em algebraic} case: $\mathbf{k}=\mathbf{R}=\overline{{\cx}}$;
\item The {\em D-finite} case: $\mathbf{k}=\overline{{\cx}}$,  $\mathbf{R}$ is the ring of  functions that are solutions to linear $\partial_{\omega}$ and $\partial_t$-differential equations with coefficients in $\overline{{\cx}}$;
\item The {\em D-algebraic} case: $\mathbf{k}=\mathbf{R}$ is the field of functions that are solutions to algebraic $\partial_{\omega}$ and $\partial_t$-differential equations with coefficients in $\overline{{\cx}}$. 
\end{itemize}
The purpose of defining $\mathbf{k}$ separately from $\mathbf{R}$ is that some functions we will be considering will sometimes involve ratios, so to prove that the ratio lies in $\mathbf{R}$ it will be sufficient to prove that the numerator and denominator both lie in $\mathbf{k}$ (or that the numerator lies in $\mathbf{R}$ while the denominator lies in $\mathbf{k}$).

 Let  $(\wp)_{t\in (0,1)}$ be a family of Weierstrass functions. The function $\wp$ depends on two variables $(\omega,t)$; let $g_2,g_3$ be the invariants (which depend upon $t$). Let us now specify the $t$-dependency of these functions.

\begin{assu}
\label{assu1}
$\wp\in \mathbf{k}$ and $g_2,g_3\in \mathbf{k}_t$.
\end{assu}

\begin{lem}
\label{rem1}
Let $k\geq 0$. Then $\partial_{\omega}^k\wp \in \mathbf{k}$. Moreover, if $\wp(a(t),t)\in\mathbf{k}_t$ for some quantity $a(t)$, then $\partial_{\omega}^{k}\wp(a(t),t)\in\mathbf{k}_t$.
\end{lem}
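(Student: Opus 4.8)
The plan is to prove both statements via the Weierstrass differential equation and the chain rule, exploiting that $\mathbf{k}$ is a ring (in fact closed under $\partial_\omega$, as I will verify) and that $\mathbf{k}_t$ is the field of $\partial_\omega$-constants of $\mathbf{k}$. Recall the classical identity
\begin{equation*}
(\partial_\omega \wp)^2 = 4\wp^3 - g_2 \wp - g_3,
\end{equation*}
together with its differentiated form $\partial_\omega^2 \wp = 6\wp^2 - \tfrac{1}{2}g_2$. By Assumption~\ref{assu1} we have $\wp \in \mathbf{k}$ and $g_2,g_3 \in \mathbf{k}_t \subset \mathbf{k}$.

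First I would treat the claim $\partial_\omega^k \wp \in \mathbf{k}$ by induction on $k$. The base cases $k=0$ (Assumption~\ref{assu1}) and $k=1$ are immediate: from $\partial_\omega^2\wp = 6\wp^2 - \tfrac12 g_2$ one sees $\partial_\omega^2\wp \in \mathbf{k}$ since $\mathbf{k}$ is a ring containing $\wp$ and $g_2$; and $(\partial_\omega\wp)^2 \in \mathbf{k}$ from the Weierstrass equation, but to get $\partial_\omega\wp \in \mathbf{k}$ itself I would instead argue that $\mathbf{k}$ is closed under $\partial_\omega$. The cleanest route: since $\cx = \C(x(\omega,t),t) \subset \mathbf{k}$ and $x(\omega,t)$ is a rational function of $\wp$ and $\partial_\omega\wp$ (by the Parametrization paragraph), and since $\mathbf{k}=\overline{\cx}$ or a $\partial_\omega$-differential ring in each of the three cases listed, $\mathbf{k}$ is stable under $\partial_\omega$. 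Granting this, $\partial_\omega\wp \in \mathbf{k}$, and then the induction step follows by applying $\partial_\omega$ to $\partial_\omega^{k-1}\wp \in \mathbf{k}$: since $\mathbf{k}$ is a $\partial_\omega$-stable ring, $\partial_\omega^k\wp = \partial_\omega(\partial_\omega^{k-1}\wp) \in \mathbf{k}$. Alternatively, avoiding any appeal to $\partial_\omega$-stability, I can prove by strong induction that every $\partial_\omega^k\wp$ is a polynomial in $\wp$, $\partial_\omega\wp$, $g_2$, $g_3$ with $\C$-coefficients, using $\partial_\omega^2\wp = 6\wp^2-\tfrac12 g_2$ to eliminate all even-order derivatives and the product rule to reduce odd-order ones; this keeps everything manifestly inside the ring generated by $\wp, \partial_\omega\wp, g_2, g_3 \subset \mathbf{k}$.

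For the second claim, suppose $\wp(a(t),t) \in \mathbf{k}_t$ for some $a(t)$. The key observation is that specialising $\omega = a(t)$ and then differentiating in $t$ are the operations at play, so I must relate $\partial_\omega^k\wp$ evaluated at $a(t)$ to $t$-derivatives. The natural approach is again induction using the recursions above: evaluating $\partial_\omega^2\wp = 6\wp^2 - \tfrac12 g_2$ at $\omega = a(t)$ gives $\partial_\omega^2\wp(a(t),t) = 6\wp(a(t),t)^2 - \tfrac12 g_2$, which lies in $\mathbf{k}_t$ since $\mathbf{k}_t$ is a field (ring suffices) containing $\wp(a(t),t)$ and $g_2$. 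This settles all even orders. For the odd orders the difficulty is that $\partial_\omega\wp(a(t),t)$ is only determined up to sign by the Weierstrass equation $(\partial_\omega\wp(a(t),t))^2 = 4\wp(a(t),t)^3 - g_2\wp(a(t),t) - g_3 \in \mathbf{k}_t$, and a square root need not lie in $\mathbf{k}_t$. I expect this sign/square-root issue to be the main obstacle. I would resolve it by noting that $\mathbf{k}_t$ is the constant field of the algebraically closed field $\mathbf{k}$, hence is itself algebraically closed, so the square root $\partial_\omega\wp(a(t),t) \in \mathbf{k}_t$ automatically; with this, every $\partial_\omega^k\wp(a(t),t)$, being a polynomial in $\wp(a(t),t)$, $\partial_\omega\wp(a(t),t)$, $g_2$, $g_3$, lies in $\mathbf{k}_t$, completing the induction.
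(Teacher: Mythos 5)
Your proof is correct in substance and follows the same skeleton as the paper's: both rest on the Weierstrass equation \eqref{eq:classical_diffeq_P}, its differentiated form $\partial_{\omega}^2\wp=6\wp^2-\tfrac{1}{2}g_2$, and an induction, with the sign/square-root issue at odd orders resolved by algebraic closedness. Two differences are worth flagging. First, your route to $\partial_{\omega}\wp\in\mathbf{k}$ --- arguing that $\mathbf{k}$ is stable under $\partial_{\omega}$, case by case over the three instantiations --- is heavier than needed and is your weakest step: for $\mathbf{k}=\overline{\cx}$ this stability is not automatic (the algebraic closure of a subfield that is \emph{not} itself $\partial_{\omega}$-stable, as $\cx$ is not, need not be stable a priori); it is essentially the content of the paper's later Lemma~\ref{lem:deriv}, which requires the uniformization of Proposition~\ref{prop:uniformization}, and invoking the three cases also sacrifices the generality of the abstract setting of Section~\ref{seccrit}, where no such stability is assumed. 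The clean argument --- which you yourself deploy for $\mathbf{k}_t$ in the second part --- works verbatim for $\mathbf{k}$ and is what the paper intends: $(\partial_{\omega}\wp)^2=4\wp^3-g_2\wp-g_3\in\mathbf{k}$ by Assumption~\ref{assu1}, and since $\mathbf{k}$ is algebraically closed by hypothesis, both roots of $X^2-(4\wp^3-g_2\wp-g_3)$ lie in $\mathbf{k}$; as $\mathcal{P}$ is a field, $\partial_{\omega}\wp$ must be one of them. With that substitution your argument needs no stability assumption at all. Second, your inductive step is in one respect \emph{more} careful than the paper's: the paper differentiates \eqref{eq:classical_diffeq_P} $k$ times and solves for $\partial_{\omega}^{k+1}\wp(a(t),t)$ as a rational function of lower-order data, which silently divides by $2\partial_{\omega}\wp(a(t),t)$, a quantity that vanishes when $a(t)$ is a half-period; your variant, iterating the second-order equation so that every $\partial_{\omega}^{k}\wp$ is a polynomial in $\wp,\partial_{\omega}\wp,g_2,g_3$, avoids that division entirely. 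Your observation that $\mathbf{k}_t$ is algebraically closed, being the constant field of the algebraically closed field $\mathbf{k}$ inside the differential field $\mathcal{P}$, is correct (minimality of the minimal polynomial forces any root in $\mathbf{k}$ of a polynomial over $\mathbf{k}_t$ to be a $\partial_{\omega}$-constant) and is exactly the point the paper leaves implicit in its treatment of the case $k=1$.
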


\begin{proof}
The first item follows from Assumption~\ref{assu1} together with the classical differential equation satisfied by $\wp$:
\begin{equation}
\label{eq:classical_diffeq_P}
   (\partial_{\omega}\wp)^2=4\wp^3-g_{2}\wp-g_{3}.
\end{equation}
We prove the second statement by induction, the case $k=0$ is trivial and the case  $k=1$ following from Assumption~\ref{assu1} and \eqref{eq:classical_diffeq_P}. For the inductive step, we take the $k$th derivative of both sides of \eqref{eq:classical_diffeq_P} and rearrange the result so as to express $\partial_{\omega}^{k+1}\wp(a(t),t)$ as a rational function of $g_{2},g_{3}$ and the lower order derivatives $\partial_{\omega}^{j}\wp(a(t),t)$, $0\leq j\leq k$. This implies that if these lower order derivatives belong to $\mathbf{k}_t$, then so does $\partial_{\omega}^{k+1}\wp(a(t),t)$. 
\end{proof}

Lemma~\ref{rem1} suggests to introduce the set $X_{t}$ of functions $b:(0,1)\to\mathbb{C}$, defined by
\begin{equation}
\label{eq:def_X_t}
   X_{t}:=\bigl\{b(t):\wp(b(t),t)\in\mathbf{k}_t\cup\{\infty\}\bigr\}.
\end{equation}

Recall that a meromorphic function admits a Laurent series expansion $\sum_{\ell=\nu}^{\infty} a_{\ell}(\omega-a)^{\ell}$ at any given point $a\in \C$. By definition, its principal part at $a$ is the rational function $\sum_{\ell=\nu}^{-1} a_{\ell}(\omega-a)^{\ell}$ (with the convention that the sum is $0$ if $\nu\geq 0$). The coefficients of the principal part at $a$ are the  complex numbers $a_{\nu},\dots,a_{-1}$. Given a meromorphic function on $\C$, the coefficients of its principal parts are the collection of all coefficients of the principal parts at its poles. If this function is elliptic, the coefficients of its principal parts form a finite set.

The following theorem is one of our main technical results. 

\begin{thm}
\label{thm2}
For $t\in (0,1)$, let $\omega \mapsto f(\omega,t)$ be a meromorphic function on $\C$.  Let us assume that Assumption~\ref{assu1} holds, and that:  
\begin{enumerate}[label={\rm(\roman*)},ref={\rm(\roman*)}]
   \item For all $t\in (0,1)$, $\omega\mapsto f(\omega,t)\in \C(\wp,\partial_{\omega}\wp)$; \label{thm2:condition_elliptic} 
   \item The  poles of $\omega\mapsto f(\omega,t)$ belong to $X_t$; \label{thm2:condition_nicepoles}
   \item The coefficients of the principal parts  of $\omega\mapsto f(\omega,t)$ belong to $\mathbf{R}_t$; \label{thm2:condition_principalcoefficients}
   \item There exists $a(t)\in X_t$ such that  $f(a(t),t)\in \mathbf{R}_t$. \label{thm2:condition_existsnicea}
\end{enumerate}
Then $f(\omega,t)\in \mathbf{R}$.
\end{thm}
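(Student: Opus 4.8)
The plan is to realise $f$ as a sum $f=h+C$, where $h(\omega,t)$ is an explicit elliptic ``model'' built from translates of $\wp$ and its derivatives that matches the poles and principal parts of $f$ and manifestly lies in $\mathbf{R}$, and $C$ is a constant in $\omega$ which will be pinned down using \ref{thm2:condition_existsnicea}. This is the effective form of the classical fact that an elliptic function is determined by its poles and principal parts up to an additive constant; the whole point is to track, at each step, that the data produced stays inside $\mathbf{R}$, using that $\mathbf{R}\supset\mathbf{k}$ is a ring while $\mathbf{k}$ is a field, so that the only divisions performed take place in $\mathbf{k}$.

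\textbf{Construction of $h$.} Let $p_1,\dots,p_r$ be the poles of $\omega\mapsto f(\omega,t)$ in a fundamental domain, with principal part $\sum_{k=1}^{m_i}c_{i,k}(\omega-p_i)^{-k}$ at $p_i$; by \ref{thm2:condition_principalcoefficients} each $c_{i,k}\in\mathbf{R}_t$. For the terms of order $k\geq 2$ I use the translated derivatives $\partial_\omega^{k-2}\wp(\omega-p_i,t)$, whose principal part at $p_i$ is exactly $(-1)^{k}(k-1)!\,(\omega-p_i)^{-k}$ and which have no other pole. For the simple poles I use the Weierstrass zeta function $\zeta$ (with $\partial_\omega\zeta=-\wp$): since $f$ is elliptic by \ref{thm2:condition_elliptic} one has $\sum_i c_{i,1}=0$, so $\sum_i c_{i,1}\zeta(\omega-p_i)$ is again elliptic, and the addition theorem $\zeta(\omega-p_i)=\zeta(\omega)-\zeta(p_i)+\tfrac12\frac{\partial_\omega\wp(\omega)+\partial_\omega\wp(p_i)}{\wp(\omega)-\wp(p_i)}$ together with $\sum_i c_{i,1}=0$ kills the non-elliptic term $\zeta(\omega)$ and leaves, up to an additive constant, the rational expression $\sum_i\tfrac{c_{i,1}}{2}\frac{\partial_\omega\wp(\omega)+\partial_\omega\wp(p_i)}{\wp(\omega)-\wp(p_i)}$. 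Setting $h$ equal to the sum of these two families of terms, $h$ is elliptic and has exactly the poles and principal parts of $f$.

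\textbf{Checking $h\in\mathbf{R}$.} By Assumption~\ref{assu1} and Lemma~\ref{rem1}, $\wp(\omega,t),\partial_\omega\wp(\omega,t)\in\mathbf{k}$; since $p_i\in X_t$ by \ref{thm2:condition_nicepoles}, the same lemma gives $\wp(p_i,t),\partial_\omega^j\wp(p_i,t)\in\mathbf{k}_t\subset\mathbf{k}$. Via the addition theorem every $\partial_\omega^{k-2}\wp(\omega-p_i,t)$ is a rational function of the four quantities $\wp(\omega),\partial_\omega\wp(\omega),\wp(p_i),\partial_\omega\wp(p_i)$, hence lies in the field $\mathbf{k}$; likewise $\frac{\partial_\omega\wp(\omega)+\partial_\omega\wp(p_i)}{\wp(\omega)-\wp(p_i)}\in\mathbf{k}$. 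Multiplying these $\mathbf{k}$-elements by the coefficients $c_{i,k}\in\mathbf{R}_t$ and summing gives $h\in\mathbf{R}$, the crucial point being that all divisions occur inside the field $\mathbf{k}$.

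\textbf{Conclusion and pinning the constant.} Since $f$ and $h$ are elliptic with the same principal parts, $f-h$ is a holomorphic elliptic function, hence a constant $C(t)$ in $\omega$ by Liouville's theorem. Evaluating at the point $a(t)\in X_t$ provided by \ref{thm2:condition_existsnicea} (which we may take away from the poles, so that $f(a(t),t)$ is finite) gives $C(t)=f(a(t),t)-h(a(t),t)$. Here $f(a(t),t)\in\mathbf{R}_t$ by hypothesis, while $h(a(t),t)\in\mathbf{R}_t$ because $a(t)\in X_t$ forces $\wp(a(t),t),\partial_\omega^j\wp(a(t),t)\in\mathbf{k}_t$ and $h$ is a rational-in-$\mathbf{k}$ expression with coefficients in $\mathbf{R}_t$; thus $C(t)\in\mathbf{R}_t\subset\mathbf{R}$ and $f=h+C\in\mathbf{R}$. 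The main obstacle is the treatment of the simple poles: $\wp$ and its derivatives only produce poles of order $\geq 2$, so one must bring in the non-elliptic function $\zeta$ and carefully exploit the vanishing of the residue sum (to cancel $\zeta(\omega)$) together with the vanishing of the numerator $\partial_\omega\wp(\omega)+\partial_\omega\wp(p_i)$ at $\omega=-p_i$ (to avoid a spurious pole), while the genuinely non-elliptic constants $\zeta(p_i)$, which need not lie in $\mathbf{R}_t$, are harmlessly absorbed into $C$ and controlled only through \ref{thm2:condition_existsnicea}.
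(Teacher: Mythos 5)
Your proof is correct, but it takes a genuinely different route from the paper's. You build an explicit Mittag--Leffler-type model $h$ --- the translates $\partial_\omega^{k-2}\wp(\omega-p_i,t)$ for the poles of order $k\geq 2$, plus the rational expressions $\frac{c_{i,1}}{2}\,\frac{\partial_\omega\wp(\omega)+\partial_\omega\wp(p_i)}{\wp(\omega)-\wp(p_i)}$ obtained from $\zeta$-differences for the simple poles --- and then invoke Liouville's theorem to reduce everything to pinning down the constant $f-h$ via condition \ref{thm2:condition_existsnicea}. The paper never leaves rational expressions in $(\wp,\partial_\omega\wp)$: it first reduces, by multiplying by $\bigl(\wp-\wp(a(t),t)\bigr)^n$ so that $f(\pm a(t),t)=0$, to the even and odd cases; it writes an even $f$ in partial-fraction form $c(t)+\sum_i a_{i,\infty}\wp^i+\sum_{i,j}a_{i,j}\bigl(\wp-\wp(b_j)\bigr)^{-i}$ and extracts $a_{i,\infty},a_{i,j}\in\mathbf{R}_t$ by matching Laurent expansions at the poles in a decreasing induction, the constant $c(t)$ being determined at the end exactly as in your argument. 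What your approach buys: no parity splitting, no normalization trick, and the classical ``determined by principal parts up to a constant'' statement is used in its most transparent form. What the paper's approach buys: it avoids the non-elliptic $\zeta$ entirely --- in your construction the constants $\zeta(p_i,t)$ are not known to lie in $\mathbf{R}_t$ and must be absorbed into the additive constant, which you do handle correctly --- and the decomposition it produces (sums of $\mathbf{R}_t$-coefficients times elements of $\mathbf{k}$) is reused verbatim later in the proof of Lemma~\ref{lem17}; your $f=C(t)+h$ yields a decomposition of the same shape, so that reuse would still go through. Both arguments rest on the same pillars: Assumption~\ref{assu1}, Lemma~\ref{rem1}, and the field/ring dichotomy between $\mathbf{k}$ and $\mathbf{R}$ guaranteeing that all divisions occur in $\mathbf{k}$. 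Two routine points you gloss over, neither fatal: a pole $p_i$ lying on the period lattice, where $\wp(p_i,t)=\infty$ and the addition formula degenerates (there one keeps $c_{i,1}\zeta(\omega,t)$ for that term, and the residue bookkeeping $\sum_i c_{i,1}=0$ still closes the construction); and the evaluation $h(a(t),t)$ when $a(t)$ is a lattice point or $a(t)\equiv -p_i$ modulo the lattice, which are removable singularities of your rational expressions whose values must be computed by expanding, staying inside $\mathbf{k}_t$ and $\mathbf{R}_t$ --- the paper's own proof likewise has to treat separately the case where $a(t)$ is a pole of $\wp$.
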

\begin{proof}
Let us begin with the case where for all $t\in (0,1)$, $f(\omega,t)$ is an even function of $\omega$. Any even elliptic function being a rational function of the Weierstrass function $\wp$ with the same periods (see, e.g., \cite[p.~44]{akhiezer1990elements}), we have a decomposition of the form 
\begin{equation}
\label{eq6}
   f(\omega,t)={c}(t)+ \sum_{i= 1}^{n_{\infty}}a_{i,\infty}(t)\wp (\omega,t)^{i}+ \sum_{j} \displaystyle \sum_{i= 1}^{n_j} \frac{a_{i,j}(t)}{\bigl(\wp (\omega,t)-\wp (b_j(t),t)\bigr)^{i} },
\end{equation}
where  
\begin{itemize}
   \item ${c}(t),a_{i,\infty}(t), a_{i,j}(t)$ are functions that do not depend upon $\omega$;
   \item $b_j(t)\in X_t \setminus \{\infty \}$ are the poles of $\omega\mapsto f(\omega,t)$;
   \item the sum $\sum_j$ is finite.
\end{itemize}

We first prove that for all $1\leq i \leq n_\infty$, $a_{i ,\infty}(t)\in \mathbf{R}_t$. To that purpose, write the series expansion of the $\wp$-Weierstrass function at $\omega=0$:
\begin{equation}
\label{eq:expansion_0_wp}
   \wp(\omega,t)=\frac{1}{\omega^2} + \sum_{k=2}^\infty f_k(t)\omega^{k}.
\end{equation}
If $n_\infty =0$ there is nothing to prove. Assume that  $n_\infty \neq 0$.
Plugging \eqref{eq:expansion_0_wp} in the identity \eqref{eq6}, we obtain that  $f(\omega,t)$ is equivalent to $a_{n_{\infty},\infty}(t)\omega^{-2n_{\infty}}$ as $\omega$ goes to $0$. Therefore, using our assumption~\ref{thm2:condition_principalcoefficients}, it implies that $a_{n_{\infty},\infty}(t)$ is in $\mathbf{R}_t$. 

We now look at cases $1\leq i < n_\infty$. It is well known that the coefficients $f_k(t)$ in \eqref{eq:expansion_0_wp} are all polynomials with rational coefficients in the invariants $g_{2}$ and $g_{3}$, see for instance the proof of Theorem~3.16.2 in \cite{JoSi-87}.  Accordingly, using Lemma~\ref{rem1}, all $f_k(t)$ belong to $\mathbf{k}_t$. This shows that for $1\leq i < n_\infty$, the coefficient of $\omega^{-2i}$ in the series expansion of $f(\omega,t)$ in \eqref{eq6} at $\omega= 0$ has the form $a_{i,\infty}(t)+f_{i,\infty}$, where $f_{i,\infty}\in\mathbf{k}_t( a_{i+1,\infty}(t),\dots ,a_{n_\infty ,\infty}(t))$. By condition~\ref{thm2:condition_principalcoefficients}, we know that the previous coefficients all lie in $\mathbf{R}_t$, so it follows by a decreasing induction  that for all $1\leq i \leq n_\infty$,   $a_{i ,\infty}(t)\in \mathbf{R}_t$.

We now prove that for all $j$ and all $1\leq i \leq n_j$, $a_{i ,j}(t)\in \mathbf{R}_t$. The reasoning is very similar to the previous case ($j=\infty$), though it has some particularities, which lead us to present the argument in detail.

For a fixed  $j$,  let $\ell\geq 1$ be minimal such that $\partial_{\omega}^{\ell}\wp(b_j(t),t)$ is not identically zero (by classical properties of Weierstrass functions, $\ell\in\{1,2\}$), so that we may write 
\begin{equation}
\label{eq:one-term-asymp}
   \frac{1}{\bigl(\wp (\omega,t)-\wp(b_j(t),t)\bigr)^{i} }=\sum_{k=1}^{\ell i}\frac{c_{j,k}(t)}{(\omega-b_j(t))^k} +O(1),
\end{equation}
where the $c_{j,k}(t)$ are such that
\begin{equation*}
   c_{j,k}(t)\in\mathbb Q\bigl(\partial_{\omega}\wp(b_j(t),t),\ldots,\partial^{\ell i}_{\omega}\wp(b_j(t),t)\bigr)\subset \mathbf{R}_t.
\end{equation*}
The order of the pole of $f(\omega,t)$ at $b_j(t)$ is $\ell n_j$, see \eqref{eq6}, so a one-term asymptotic expansion \eqref{eq:one-term-asymp} gives the dominant term in the principal part of $f$ at $b_j(t)$. As a consequence, if $i=n_{j}$ in \eqref{eq6}, the coefficients of the principal parts of $\omega\mapsto f(\omega,t)$ being in $\mathbf{R}_t$ (Assumption~\ref{thm2:condition_principalcoefficients} of Theorem~\ref{thm2}), and since by Lemma~\ref{rem1},  $\partial_{\omega}^{k}\wp(b_j(t),t)\in\mathbf{k}_t$, we deduce that $a_{n_j,j}\in \mathbf{R}_t$.  

If now $1\leq i < n_j$, one needs to go further and to look at the complete expansion \eqref{eq:one-term-asymp}. The coefficient of the term $(\omega- b_j(t))^{-\ell i}$ in the Laurent expansion of $f(\omega,t)$ admits the form $c_{j,\ell i}(t)+f_{i,j}$,  where $f_{i,j}\in\mathbf{k}_t ( a_{i+1,j}(t),\dots ,a_{n_j ,j}(t))$. By a decreasing induction, we deduce that for all $1\leq i \leq n_j$,   $a_{i ,j}(t)\in \mathbf{R}_t$.

Recall that by condition~\ref{thm2:condition_existsnicea}, there is some function $a(t)\in X_t$ such that $f(a(t),t)\in \mathbf{R}_t$. If $a(t)$ happens to be a pole of $\omega \mapsto \wp(\omega,t)$, substituting $\omega=a(t)$ into \eqref{eq6} then yields 
$a_{1,\infty}(t)=\dots =a_{n_{\infty},\infty}(t)=0$ and ${c}(t)\in \mathbf{R}_t$.  
When $a(t)$ is not a pole, then $\wp (a(t),t)\in \mathbf{k}_t$ and substituting $\omega=a(t)$ into \eqref{eq6} gives
\begin{equation*}
   {c}(t)+\sum_{i= 1}^{n_{\infty}}a_{i,\infty}(t)\wp (a(t),t)^{i}+ \sum_{j} \sum_{i= 1}^{n_j} \frac{a_{i,j}(t)}{\bigl(\wp (a(t),t)-\wp(b_j(t),t)\bigr)^{i} }\in \mathbf{R}_t.
\end{equation*}
Moreover, every term in this expression apart from ${c}(t)$ belongs to the ring $\mathbf{R}_t$, so we must have ${c}(t)\in\mathbf{R}_t$.  In both cases, we have proved that ${c}(t)\in\mathbf{R}_t$.

Summing up, we have shown that every term in the right-hand side of \eqref{eq6} belongs to $\mathbf{R}_t (\wp(\omega,t))$.  Since $\wp(\omega,t)\in \mathbf{k}\subset \mathbf{R}$ by Assumption~\ref{assu1}, we find that $f(\omega,t)$ belongs to $\mathbf{R}$. This concludes the proof in the  even case.  
 
We now consider the case where $\omega\mapsto f(\omega,t)$ is odd for all $t\in (0,1)$. 
Then the function $ \partial_{\omega}\wp(\omega,t)^{-1} f(\omega,t)$ is even and may be written in the same form as \eqref{eq6}, so that we have 
\begin{equation*}
f(\omega,t)=\sum_{i=0}^{n_{\infty}}a_{i,\infty}(t)\partial_{\omega}\wp(\omega,t)\wp (\omega,t)^{i}+ \sum_{j} \displaystyle \sum_{i= 1}^{n_j} \frac{a_{i,j}(t)\partial_{\omega}\wp(\omega,t)}{\bigl(\wp (\omega,t)-\wp (b_j(t),t)\bigr)^{i} },
\end{equation*}
where the $i=0$ term in the first sum above replaces the ${c}(t)$ term occurring in \eqref{eq6}. Similar computations as in the even case give that $f(\omega,t)$ belongs to $\mathbf{R}$.
  
We finally consider the general case. For any integer $n$, $\bigl(\wp(\omega,t)-\wp (a(t),t)\bigr)^{n}\in \mathbf{k}$. Then,  $f(\omega,t)\in \mathbf{R}$ if and only if $f(\omega,t)\bigl(\wp(\omega,t)-\wp (a(t),t)\bigr)^{n}\in \mathbf{R}$. So without loss of generality, we may reduce to the case where $f( a(t),t)=f( -a(t),t)=0$.

Classically, we write 
\begin{equation*}
   f(\omega,t)=f_{+}(\omega,t)+f_{-}(\omega,t),
\end{equation*}
  where $f_{+}(\omega,t)=\frac{f(\omega,t)+f(-\omega,t)}{2}$ is even and $f_{-}(\omega,t)=\frac{f(\omega,t)-f(-\omega,t)}{2}$ is odd.
  Given that conditions~\ref{thm2:condition_elliptic}--\ref{thm2:condition_principalcoefficients} hold for $f(\omega,t)$, they hold as well for $f(-\omega,t)$, and therefore for $f_{+}(\omega,t)$ and $f_{-}(\omega,t)$. With $f(\pm a(t),t)=0$, we find $f_{\pm}( a(t),t)=0$. Then our condition~\ref{thm2:condition_existsnicea} also holds.
From the even and odd cases of this theorem, we deduce that $f_{\pm }(\omega,t)\in\mathbf{R}$, so $f(\omega,t)=f_{+}(\omega,t)+f_{-}(\omega,t)\in\mathbf{R}$, as required.
 This completes the proof.
\end{proof}

\section{Algebraic case and transformation theory of elliptic functions}
\label{secalg}

The main objective of this section is to prove Theorem~\ref{thm1}.
This result states that, under the assumption that the group of the walk is finite, $Q(x,y,t)$ is algebraic over $\C(x,y,t)$ if and only if the orbit-sum $\mathcal{O}(x,y)$ defined in \eqref{eq:formal_orbit_sum} is identically zero. 

\subsection{Strategy of proof}

It will be convenient to write the orbit-sum directly as a function of $\omega$ and $t$, for any $\omega\in\mathbb C$ and $t\in (0,1)$:
\begin{equation}
\label{eq:orbit_sum_Ox}
   \mathcal{O}_x(\omega,t) = \mathcal O\bigl(x(\omega,t),y(\omega,t)\bigr).
\end{equation}
Obviously, the hypothesis on the orbit-sum implies that $\mathcal{O}_x(\omega,t)=0$. 
Note that \cite[Thm~4.1]{dreyfus2019differential} proves that for $t_0\in (0,1)$ fixed, $Q(x,y,t_0)$ is algebraic over $\C(x,y)$ if and only if  $\mathcal{O}_x(\omega,t_0)$ is identically zero. Hence, if $Q(x,y,t)$ is algebraic over $\C(x,y,t)$ then $\mathcal{O}_x(\omega,t)=0$ for all $t\in (0,1)$. The following lemma proves that the formal orbit-sum $\mathcal{O}(x,y)$ in \eqref{eq:formal_orbit_sum} is zero as well, proving the ``if part'' in Theorem~\ref{thm1}. In the remaining of Section~\ref{secalg}, we will thus concentrate on the ``only if'' part.

\begin{lem}\label{lem9}
Assume that for all $t\in (0,1)$, the orbit-sum $\mathcal{O}(x,y)$ vanishes on $\Etproj$. Then $\mathcal{O}(x,y)$ is identically zero on $\mathbb C^2$. 
\end{lem}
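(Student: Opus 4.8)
The plan is to leverage the fact that the hypothesis controls $\mathcal{O}$ along the \emph{entire family} of kernel curves $\{\Etproj\}_{t\in(0,1)}$, whereas $\mathcal{O}(x,y)$ is itself a single rational function carrying no $t$-dependence. First I would make this last point precise. Recall from Section~\ref{secnot} that on $\Etproj$ the involution $\iota_1$ fixes $x$ and exchanges the two $y$-roots of $K(x,\cdot,t)$; writing $K(x,y,t)=xy-t\sum_{i,j}d_{i,j}x^{i+1}y^{j+1}$ and reading off the leading and constant coefficients in $y$, the product of these two roots equals $A_{-1}(x)/A_1(x)$, so that $\iota_1(x,y)=\bigl(x,\tfrac{A_{-1}(x)}{A_1(x)\,y}\bigr)$. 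Likewise $\iota_2(x,y)=\bigl(\tfrac{B_{-1}(y)}{B_1(y)\,x},y\bigr)$. Neither map involves $t$; hence every $g\in G$ acts by a fixed birational transformation of $\mathbb{C}^2$, and, the group being finite, $\mathcal{O}(x,y)=\sum_{g\in G}\mathrm{sign}(g)\,(xy)\circ g$ is one well-defined element of $\C(x,y)$, independent of $t$.

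Write $\mathcal{O}=P/Q$ with $P,Q\in\C[x,y]$ coprime. The assumption that $\mathcal{O}$ vanishes on $\Etproj$ for a given $t$ means, in the parametrization, that $\mathcal{O}_x(\omega,t)=0$ for all $\omega$, i.e.\ the restriction of $\mathcal{O}$ to $\Etproj$ is the zero function; in particular $\Etproj$ is not contained in the pole locus $\{Q=0\}$, so $P$ vanishes on the Zariski-dense set $\Etproj\setminus\{Q=0\}$. In the elliptic case the affine kernel curve is cut out by the irreducible polynomial $K_t(x,y):=K(x,y,t)$, whence $K_t$ divides $P$ in $\C[x,y]$. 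The crucial observation is now that for $t\neq t'$ the polynomials $K_t$ and $K_{t'}$ are non-associate: both have the shape $xy-t\,\overline{S}(x,y)$ with $\overline{S}(x,y)=\sum_{i,j}d_{i,j}x^{i+1}y^{j+1}$ nonconstant, and matching the coefficient of $xy$ in any proportionality forces $t=t'$.

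Running the argument over all $t\in(0,1)$ then yields infinitely many pairwise non-associate irreducible polynomials $K_t$, each dividing the single fixed polynomial $P$. Since a nonzero element of $\C[x,y]$ has only finitely many irreducible factors up to scalars, this is impossible unless $P=0$; hence $\mathcal{O}(x,y)\equiv0$ on $\mathbb{C}^2$, as required. The main obstacle, and the only genuinely substantive step, is the $t$-independence of $\mathcal{O}$ established in the first paragraph: it rests on the explicit rational and $t$-free form of $\iota_1,\iota_2$ together with the finiteness of $G$, which guarantees that $\mathcal{O}$ is an honest rational function rather than a formal infinite sum. Once this is secured, the remaining ingredients — irreducibility of $K_t$ in the elliptic case, pairwise non-associativity of the $K_t$, and the finiteness of the factorization of $P$ — are routine; the one point deserving a line of care is the passage from ``$\mathcal{O}$ vanishes on $\Etproj$'' to ``$K_t\mid P$'', which uses precisely that the pole locus of $\mathcal{O}$ cannot contain the irreducible curve $\Etproj$.
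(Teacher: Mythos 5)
Your proof is correct, but it takes a genuinely different route from the paper's. The paper argues pointwise in one variable: fix $x\in\C$ and let $y_{\pm}(x,t)$ be the roots of $y\mapsto \overline{K}(x,y,t)$; restricting $t$ to a suitable interval $U$ on which these roots vary continuously and non-trivially, the set $\{y_{+}(x,t) : t\in U\}$ is infinite, and since $\mathcal{O}\bigl(x,y_{+}(x,t)\bigr)=0$ for all such $t$, the univariate rational function $y\mapsto \mathcal{O}(x,y)$ has infinitely many zeros and is therefore identically zero; as $x$ is arbitrary, $\mathcal{O}\equiv 0$. You instead argue globally by divisibility: after making explicit that $\iota_1,\iota_2$, hence $\mathcal{O}$, carry no $t$-dependence (the paper treats this as definitional, since $G$ is generated by these fixed birational maps and $G_t$ is only its specialization), you show that each kernel polynomial $K_t:=K(x,y,t)$, irreducible in the elliptic case, divides the numerator $P$ of $\mathcal{O}$, that distinct values of $t$ give non-associate $K_t$, and you conclude $P=0$ because a nonzero polynomial in $\C[x,y]$ has only finitely many irreducible factors. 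Both arguments exploit the same phenomenon --- the curves $\Etproj$ genuinely move with $t$ --- but the paper's is more elementary and more robust: it needs neither the irreducibility of $K_t$ (i.e., no appeal to the elliptic, non-degenerate hypothesis) nor the Nullstellensatz, and it would survive verbatim even for reducible kernels, whereas your step ``$K_t\mid P$'' only makes sense once irreducibility is secured. In exchange, your version isolates the structural reason the conclusion holds and records the $t$-independence of $\mathcal{O}$ explicitly. One small precision: for the pairwise non-associativity of the $K_t$ you need $\overline{S}(x,y)=\sum_{i,j}d_{i,j}x^{i+1}y^{j+1}$ to contain a monomial other than $xy$ (equivalently, some $d_{i,j}\neq 0$ with $(i,j)\neq(0,0)$), not merely that $\overline{S}$ be nonconstant: if $\overline{S}=d_{0,0}xy$, it is nonconstant yet every $K_t=(1-td_{0,0})xy$ is associate to $xy$. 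This stronger property does hold in the elliptic case, since then $A_{\pm1}$ and $B_{\pm1}$ are not identically zero, so your proof stands as written within the paper's standing assumptions.
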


\begin{proof}
Let us fix $t\in (0,1)$ and $x\in \C$. Let $y_{\pm}(x,t)$ be such that $\overline{K}(x,y_{\pm}(x,t),t)=0$. Note that  the roots  $y_{\pm}(x,t)$ depend on $t$. We may restrict $(0,1)$ to an interval $U$, and without loss of generality reduce to the case where for such $x$ fixed,  $y_{\pm}(x,t)$ depends continuously upon $t\in U$. In particular, $\{y_{+}(x,t) \vert t\in U\}$ admits an infinite number of values.
  We now use the fact that  $y\mapsto \mathcal{O}(x,y)$ is a rational function. Since for all $t\in U$, $\mathcal{O}(x,y_{+}(x,t))=0$ we deduce that $y\mapsto \mathcal{O}(x,y)$ has an infinite number of roots, it is therefore $0$. Since $x$ is arbitrary, we deduce that $\mathcal{O}(x,y)$ is zero.
\end{proof}

Recall that the period $\omega_{1}$ is purely imaginary, while $\omega_{2}$ and $\omega_{3}$ are positive real numbers, see Section~\ref{secnot}. Moreover, we consider the transformation $\widetilde{\sigma}$ from Section~\ref{secnot} satisfying $ \widetilde{\sigma}(\omega)=\omega+\omega_{3}$. 
Since the group is finite, there exists a non-zero $\ell\in \N$ such that $\widetilde{\sigma}^{\ell}$  leaves invariant the lattice $\Lambda=\omega_{1}\mathbb{Z}+\omega_{2}\mathbb{Z}$. Hence $\ell\omega_3\in \Lambda$, so there exists a positive number $k$ such that 
\begin{equation}
\label{eq:ratio_periods}
   \frac{\omega_3}{\omega_2}=\frac{k}{\ell}.
\end{equation}
We may further assume that $k$ and $\ell$ are coprime.

Let us iterate \eqref{eq:omega_3_per_rx} and \eqref{eq:omega_3_per_ry} to deduce that
\begin{align}
\label{eqrx_transform_finite_group}    
   \rx(\omega+\ell\omega_{3}(t),t)  &=\rx(\omega,t)+\mathcal{O}_{x}(\omega,t),\\
   \nonumber 
   \ry(\omega+\ell\omega_{3}(t),t)  &=\ry(\omega,t)+\mathcal{O}_{y}(\omega,t),\end{align}
where
\begin{equation}
\label{eq:orbit_sums}   
   \mathcal{O}_{x}(\omega,t)=\sum_{j=0}^{\ell-1} b_{x}(\omega+j\omega_3 (t),t)\quad  \text{and} \quad \mathcal{O}_{y}(\omega,t)=\sum_{j=0}^{\ell-1} b_{y}(\omega+j\omega_3 (t),t)
\end{equation}
are the orbit-sum \eqref{eq:orbit_sum_Ox} and its $y$-analogue. 
It is shown in the proof of \cite[Thm~4.1]{dreyfus2019differential} that  $\mathcal{O}_{x}(\omega,t)=- \mathcal{O}_{y}(\omega,t)$, so the orbit-sums $\mathcal{O}_{x}$ and $\mathcal{O}_{y}$ are simultaneously zero or non-zero.

In this section we prove that if the orbit-sum is zero, then $Q(x,y,t)$ is algebraic over $\C(x,y,t)$.  It suffices to show that if for all $t\in (0,1)$, $\mathcal{O}_{x}(\omega,t)= \mathcal{O}_{y}(\omega,t)=0$ then $Q(x,y,t)$ is algebraic over $\C(x,y,t)$.
Using \eqref{eq:funcequ}, it suffices to show that $F^1$ and $F^2$ (defined in \eqref{eq:def_F1_F2})
are algebraic over $\C(x,t)$ and $\C(y,t)$, respectively (since $K(0,0,t)Q(0,0,t)=F^{1}(0,t)=F^{2}(0,t)$). We will only prove the result for $F^{1}(x,t)$, as the other case would be derived similarly, by symmetry.

For $j,k\in \N$ and non-zero, let $\wp^{(j,k)} (\omega,t)$ be the Weierstrass function with periods $(j\omega_1 (t),k\omega_2 (t))$, with invariants denoted by  $g_2^{(j,k)}$ and $g_3^{(j,k)}$. 
 
Assume that for all $t\in (0,1)$, $\mathcal{O}_{x}(\omega,t)= \mathcal{O}_{y}(\omega,t)=0$.
By \eqref{eqrx_transform_finite_group}, the analytic continuation  $\rx (\omega,t)$ of $F^{1}(x(\omega,t),t)$ is $(\omega_1 (t),k\omega_2 (t))$-elliptic,  with $\ell\omega_3=k\omega_2$.  We want to apply Theorem~\ref{thm2} with $\mathbf{k}=\mathbf{R}=\overline{{\cx}}$ and $f=r_{x}$. 

One key point (which is independent of the orbit-sum being zero and will be true in the D-finite case presented in Section~\ref{sec:DF_case} as well) is to prove that Assumption~\ref{assu1} is satisfied, namely: 
\begin{thm}
\label{thm3}
Assume that the group of the walk is finite.  Then  $\wp^{(1,k)}$ is algebraic over ${\cx}$ and $g_2^{(1,k)},g_3^{(1,k)}$ are algebraic over $\mathbb C(t)$.
\end{thm}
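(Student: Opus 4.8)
The plan is to handle the two assertions by reducing everything to the base lattice $\omega_1(t)\Z+\omega_2(t)\Z$ of $\wp=\wp^{(1,1)}$, where the parametrization of Proposition~\ref{prop:uniformization} gives explicit algebraic control, and then to transport this control to the index-$k$ sublattice $\omega_1(t)\Z+k\omega_2(t)\Z$ of $\wp^{(1,k)}$ by transformation theory. First I would record the base case. The roots $a_1(t),\dots,a_4(t)$ of the quartic discriminant $D_1(x,t)$ are algebraic over $\C(t)$ (as noted after Proposition~\ref{prop:uniformization}), and the parametrization yields the M\"obius relation $\wp^{(1,1)}(\omega,t)=\tfrac16 D_1''(a(t),t)+D_1'(a(t),t)/(x(\omega,t)-a(t))$, whose coefficients lie in $\overline{\C(t)}$. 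Solving this for $\wp^{(1,1)}$ as a rational function of $x$ over $\overline{\C(t)}$ shows at once that $\wp^{(1,1)}$ is algebraic over $\cx=\C(x(\omega,t),t)$.

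Applying the inverse M\"obius map $x\mapsto\wp$ at the branch points, the three finite half-period values $e_1,e_2,e_3$ of $\wp^{(1,1)}$ are precisely the images of the three roots $a_i(t)\neq a(t)$: indeed the double cover $\omega\mapsto x$ branches over $\{a_1,\dots,a_4\}$, while $\omega\mapsto\wp^{(1,1)}$ branches over $\{e_1,e_2,e_3,\infty\}$, and the two agree through the M\"obius map, with $\infty\mapsto a(t)$. Hence $e_1,e_2,e_3\in\overline{\C(t)}$, and since $g_2^{(1,1)}=-4\sum_{m<m'}e_me_{m'}$ and $g_3^{(1,1)}=4e_1e_2e_3$ are symmetric in the $e_m$, we get $g_2^{(1,1)},g_3^{(1,1)}\in\overline{\C(t)}$.

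The crux is the sublattice invariants $g_2^{(1,k)},g_3^{(1,k)}$, the first claim of the theorem. As $g_2^{(1,1)},g_3^{(1,1)}\in\overline{\C(t)}$, it suffices to show these are algebraic over $\C(g_2^{(1,1)},g_3^{(1,1)})$, i.e.\ that the invariants of a finite-index sublattice are algebraic over those of the lattice. Writing the two lattices as $\omega_2(\tau\Z+\Z)$ and $k\omega_2(\tfrac{\tau}{k}\Z+\Z)$ with $\tau=\omega_1/\omega_2$, one has $g_i^{(1,1)}=\omega_2^{-2i}G_i(\tau)$ and $g_i^{(1,k)}=(k\omega_2)^{-2i}G_i(\tau/k)$ for $i\in\{2,3\}$, where $G_i$ is the weight-$2i$ Eisenstein series. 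The two curves are $k$-isogenous, so the modular equation $\Phi_k(j(\tau),j(\tau/k))=0$, with $\Phi_k\in\Z[X,Y]$, shows $j^{(1,k)}$ is algebraic over $\C(j^{(1,1)})\subseteq\overline{\C(t)}$; the remaining scale is fixed by observing that the ratio of modular discriminants $\Delta^{(1,k)}/\Delta^{(1,1)}=k^{-12}\Delta(\tau/k)/\Delta(\tau)$ is a nonvanishing weight-zero modular function for a congruence subgroup, hence algebraic over $\C(j^{(1,1)})$. Combining $j^{(1,k)}$ and $\Delta^{(1,k)}$ recovers $g_2^{(1,k)},g_3^{(1,k)}$ algebraically over $\C(g_2^{(1,1)},g_3^{(1,1)})$, and so over $\C(t)$. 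I expect this scaling step --- passing from the algebraicity of the $j$-invariant to that of $g_2,g_3$ individually, where the transcendental factor $\omega_2$ must be shown to cancel --- to be the main obstacle.

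Finally I would deduce that $\wp^{(1,k)}$ is algebraic over $\cx$. Having established $g_2^{(1,k)},g_3^{(1,k)}\in\overline{\C(t)}$, the curve $C^{(1,k)}\colon Y^2=4X^3-g_2^{(1,k)}X-g_3^{(1,k)}$ is defined over $\overline{\C(t)}$, and the inclusion of lattices induces a degree-$k$ isogeny $\phi\colon C^{(1,k)}\to C^{(1,1)}$ sending $(\wp^{(1,k)}(\omega),\partial_\omega\wp^{(1,k)}(\omega))$ to $(\wp^{(1,1)}(\omega),\partial_\omega\wp^{(1,1)}(\omega))$. Its kernel is generated by the image of $\omega_2$ and consists of $k$-torsion points of $C^{(1,k)}$, whose coordinates are roots of the division polynomial $\psi_k$ and thus lie in the algebraically closed field $\overline{\C(t)}$; by Vélu's formulas $\phi$ is therefore defined over $\overline{\C(t)}$, and its first coordinate is a degree-$k$ rational function $R\in\overline{\C(t)}(X)$ with $\wp^{(1,1)}(\omega)=R(\wp^{(1,k)}(\omega))$. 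Since $\wp^{(1,1)}\in\overline{\cx}$ by the base case, $\wp^{(1,k)}$ is a root of $R(X)-\wp^{(1,1)}\in\overline{\cx}[X]$, hence algebraic over $\cx$, which completes the proof.
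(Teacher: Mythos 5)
Your proof is correct, but it follows a genuinely different route from the paper's. The paper stays entirely within elementary Weierstrass-function manipulations and proves the two claims in the opposite order: it first establishes the algebraicity of $\wp^{(1,k)}$ over $\cx$ by passing through the scaled lattice of $\wp^{(k,k)}$ (Lemmas~\ref{lem2} and~\ref{lem6}), proving algebraicity over $\C(t)$ of the special values $\wp^{(k,k)}(\lambda\omega_1(t),t)$ at rational multiples of the periods by a specialization trick (Lemma~\ref{lem3}), and then assembling $\wp^{(1,k)}$ via the coset-averaging identity \eqref{eq2} and the addition formula (Lemma~\ref{lem7}); only afterwards does it deduce $g_2^{(1,k)},g_3^{(1,k)}\in\overline{\C(t)}$ from the differential equations satisfied by $\wp^{(1,k)}$ (Lemma~\ref{lem8}). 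You instead obtain the invariants first, using the modular polynomial $\Phi_k$ to control $j^{(1,k)}$ and the ratio of discriminants --- a weight-zero modular function on a congruence subgroup, hence algebraic over $\C(j^{(1,1)})$ --- to fix the homothety scale; this is a correct resolution of the real difficulty you identify (the $j$-invariant only sees the lattice up to scale), a difficulty the paper sidesteps by working with the functions rather than the invariants. You then recover $\wp^{(1,k)}$ via a degree-$k$ isogeny defined over $\overline{\C(t)}$, which is legitimate because its kernel consists of $k$-torsion points whose coordinates are roots of division polynomials over $\overline{\C(t)}$. Note that your V\'elu coordinate map is precisely the analogue of the paper's identity \eqref{eq2} for the pair of lattices $\omega_1\Z+k\omega_2\Z\subset\omega_1\Z+\omega_2\Z$, so the two arguments ultimately compute the same degree-$k$ rational function; also, your branch-point derivation of $g_2^{(1,1)},g_3^{(1,1)}\in\overline{\C(t)}$ is a self-contained replacement for the paper's citation of \cite{dreyfus2019differential} in Lemma~\ref{lem4}. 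What each approach buys: yours is shorter and more conceptual granted the standard theory of modular curves, division polynomials and V\'elu's formulas; the paper's elementary route produces reusable intermediate statements (Lemmas~\ref{lem:deriv}, \ref{lem:Wal}, \ref{lem3} and~\ref{lem6}) that are needed again later, e.g.\ in Lemmas~\ref{lem1:2} and~\ref{lem:zeta_DF}, which your argument would not supply.
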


\subsection{Preliminary results on transformed Weierstrass functions}
\label{subsec:prel}

Theorem~\ref{thm3} will be obtained as a direct consequence of Lemmas~\ref{lem7} and~\ref{lem8} below. In order to prove these lemmas, we will first prove several intermediate results, and will show successively that $g_2^{(1,1)}, g_3^{(1,1)}\in\overline{\mathbb C(t)}$ (see Lemma~\ref{lem4}), $\wp^{(1,1)} (\omega,t)\in\overline{{\cx}}$ (Lemma~\ref{lem5}), $g_2^{(k,k)}, g_3^{(k,k)}\in\overline{\mathbb C(t)}$ (Lemma~\ref{lem2}) and finally $\wp^{(k,k)} (\omega,t)\in\overline{{\cx}}$ (Lemma~\ref{lem6}). It is worth mentioning that while some of the forthcoming lemmas are specific to our context, some others are very general statements on elliptic functions. Note that in Section~\ref{subsec:prel} we do not make the assumption that the orbit-sum is $0$ (nor that the group is finite), so we will be able to reuse these results in Section \ref{sec:DF_case}.

\begin{lem}
\label{lem4}
The invariants $g_2^{(1,1)}$ and $g_3^{(1,1)}$ 	are algebraic over $\C(t)$. 
\end{lem}

\begin{proof}
 See  \cite[(2.7) and (2.8)]{dreyfus2019differential}.
\end{proof}

\begin{lem}
\label{lem5}
The elliptic functions $\wp^{(1,1)} (\omega,t)$ and $\partial_{\omega}\wp^{(1,1)} (\omega,t)$ are algebraic over  $\cx$. 
\end{lem}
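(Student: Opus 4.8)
The goal is to show that $\wp^{(1,1)}(\omega,t)$ and $\partial_\omega\wp^{(1,1)}(\omega,t)$ are algebraic over $\cx = \C(x(\omega,t),t)$. Recall from Proposition~\ref{prop:uniformization} that the $x$-coordinate of the parametrization is
\begin{equation*}
   x(\omega,t)=\left[a(t)+\frac{D'_{1}(a(t),t)}{\wp (\omega,t)-\frac{1}{6}D''_{1}(a(t),t)}:1\right],
\end{equation*}
where $\wp=\wp^{(1,1)}$ is the Weierstrass function attached to the full lattice $\omega_1(t)\Z+\omega_2(t)\Z$. My plan is to invert this relation. Solving for $\wp(\omega,t)$ gives
\begin{equation*}
   \wp(\omega,t)=\tfrac{1}{6}D''_{1}(a(t),t)+\frac{D'_{1}(a(t),t)}{x(\omega,t)-a(t)}.
\end{equation*}
Since $a(t)$ is algebraic over $\C(t)$ (as noted in Section~\ref{secnot}, the roots of the discriminant are algebraic over $\C(t)$), and $D_1(\star,t)=\Delta_1([\star:1],t)$ is a polynomial in $\star$ with coefficients in $\K(t)$, both $D'_1(a(t),t)$ and $D''_1(a(t),t)$ are algebraic over $\C(t)$. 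Hence the right-hand side is a rational function of $x(\omega,t)$ with coefficients algebraic over $\C(t)$, which shows immediately that $\wp^{(1,1)}(\omega,t)\in\overline{\cx}$.

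For the derivative, the cleanest route is to invoke the classical differential equation \eqref{eq:classical_diffeq_P}, namely $(\partial_\omega\wp)^2=4\wp^3-g_2\wp-g_3$ with $g_2=g_2^{(1,1)}$ and $g_3=g_3^{(1,1)}$. By Lemma~\ref{lem4} the invariants $g_2^{(1,1)}$ and $g_3^{(1,1)}$ are algebraic over $\C(t)$, and we have just shown $\wp^{(1,1)}\in\overline{\cx}$; therefore the right-hand side $4\wp^3-g_2\wp-g_3$ lies in $\overline{\cx}$. Consequently $(\partial_\omega\wp^{(1,1)})^2\in\overline{\cx}$, so $\partial_\omega\wp^{(1,1)}$ is algebraic over $\cx$ (it satisfies the quadratic $T^2-(4\wp^3-g_2\wp-g_3)=0$ over $\overline{\cx}$, hence over $\cx$). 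This completes both claims.

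I do not expect a serious obstacle here, since the argument is essentially an algebraic inversion of the explicit uniformization. The one point deserving a little care is confirming that the coefficients appearing in the formula for $x(\omega,t)$, namely $a(t)$, $D'_1(a(t),t)$ and $D''_1(a(t),t)$, are genuinely algebraic over $\C(t)$ rather than merely meromorphic; this follows because $a(t)$ is a root of a polynomial with coefficients in $\K(t)\subset\C(t)$ and the derivatives of $D_1$ are again polynomials in the first argument with coefficients in $\K(t)$, so evaluating at $\omega$-independent algebraic quantities keeps us in $\overline{\C(t)}$. One should also note that $\wp^{(1,1)}(\omega,t)$ is genuinely non-constant in $x$, so that the inversion makes sense and $x(\omega,t)$ is not identically equal to $a(t)$; this is guaranteed by the ellipticity assumption, under which $D'_1(a(t),t)\neq 0$.
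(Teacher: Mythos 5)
Your proof is correct and follows essentially the same route as the paper: invert the uniformization formula of Proposition~\ref{prop:uniformization} to write $\wp^{(1,1)}$ as a rational function of $x(\omega,t)$ with coefficients in $\overline{\C(t)}$, then use Lemma~\ref{lem4} together with the differential equation \eqref{eq:classical_diffeq_P} to handle $\partial_\omega\wp^{(1,1)}$. The only difference is that you carry out the inversion explicitly (and justify $D'_1(a(t),t)\neq 0$), whereas the paper simply asserts the existence of such a rational function.
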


\begin{proof}
By  Proposition~\ref{prop:uniformization},  there exists a rational function  $f$ with coefficients in $\overline{\C(t)}$ such that $\wp^{(1,1)} (\omega,t) =f(x(\omega,t))$, so $\wp^{(1,1)} $  is algebraic over ${\cx}$. By  Lemma~\ref{lem4} and Equation~\eqref{eq:classical_diffeq_P}, it follows that $\partial_{\omega}\wp^{(1,1)} (\omega,t)$ is algebraic over ${\cx}$.
\end{proof}

\begin{lem}
\label{lem:deriv}
For any function $h(\omega,t)$ algebraic over ${\cx}$, the same holds for $\partial_{\omega}h(\omega,t)$.
\end{lem}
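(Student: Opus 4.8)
The plan is to reduce the statement to the standard differential-algebra fact that, in characteristic zero, the derivative of an algebraic element remains algebraic, provided one first knows that the derivation $\partial_{\omega}$ sends the base field $\cx$ into its algebraic closure $\overline{\cx}$. So the proof splits into establishing that $\partial_{\omega}(\cx)\subset\overline{\cx}$ and then running the usual minimal-polynomial computation.

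First I would check that $\partial_{\omega}x(\omega,t)$ is algebraic over $\cx$. By Proposition~\ref{prop:uniformization}, $x(\omega,t)$ is an explicit rational function of $\wp(\omega,t)=\wp^{(1,1)}(\omega,t)$ with coefficients algebraic over $\C(t)$; inverting this relation in fact shows $\wp\in\cx$. Differentiating in $\omega$, the quantity $\partial_{\omega}x$ becomes a rational function of $\wp$ and $\partial_{\omega}\wp$, and since $\partial_{\omega}\wp$ is algebraic over $\cx$ by Lemma~\ref{lem5} (equivalently via the differential equation~\eqref{eq:classical_diffeq_P} together with Lemma~\ref{lem4}), I conclude that $\partial_{\omega}x$ is algebraic over $\cx$. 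Next, since every element of $\cx=\C(x(\omega,t),t)$ is a rational function $R(x,t)$ and $\partial_{\omega}t=0$, the chain rule gives $\partial_{\omega}R(x,t)=(\partial_{x}R)(x,t)\,\partial_{\omega}x$, which lies in $\overline{\cx}$ because $\partial_{x}R(x,t)\in\cx$ and $\partial_{\omega}x\in\overline{\cx}$. Hence $\partial_{\omega}(\cx)\subset\overline{\cx}$.

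Finally, let $h$ be algebraic over $\cx$ with minimal polynomial $P(T)=\sum_{i=0}^{n}c_{i}T^{i}$, where $c_{i}\in\cx$, so that $\sum_{i}c_{i}h^{i}=0$. Applying $\partial_{\omega}$ and using the Leibniz rule yields $P'(h)\,\partial_{\omega}h=-\sum_{i}(\partial_{\omega}c_{i})\,h^{i}$, whence $\partial_{\omega}h=-\bigl(\sum_{i}(\partial_{\omega}c_{i})\,h^{i}\bigr)/P'(h)$. In characteristic zero the minimal polynomial $P$ is separable, so $P'(h)\neq 0$ (otherwise $h$ would satisfy the nonzero polynomial $P'$ of degree $n-1<n$, contradicting minimality). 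The right-hand side is then a ratio of elements algebraic over $\cx$: each $\partial_{\omega}c_{i}\in\overline{\cx}$ by the previous step, each $h^{i}$ is algebraic, and $P'(h)\in\overline{\cx}$ as well. Consequently $\partial_{\omega}h$ is algebraic over $\cx$, as required.

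The only genuinely non-formal input is the algebraicity of $\partial_{\omega}x$ over $\cx$ secured in the second paragraph, which rests on the uniformization of Proposition~\ref{prop:uniformization} and on Lemma~\ref{lem5}; everything else is the routine differential-algebra argument, and I do not anticipate any real obstacle.
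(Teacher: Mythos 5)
Your proof is correct and is essentially the paper's argument: both reduce everything to the algebraicity of $\partial_{\omega}x(\omega,t)$ over $\cx$ --- obtained from the uniformization of Proposition~\ref{prop:uniformization} together with Lemma~\ref{lem5} --- and then differentiate a polynomial relation for $h$, using minimality (separability in characteristic zero) to solve for $\partial_{\omega}h$. The only difference is organizational: the paper obtains $\partial_{\omega}x\in\overline{\cx}$ by applying its ratio identity $\partial_{\omega}h/\partial_{\omega}x=-P_{x}/P_{h}$ to the particular choice $h=\wp^{(1,1)}$, whereas you differentiate the explicit uniformization formula directly and then invoke the standard fact that a derivation sends algebraic elements into the algebraic closure; the inputs are identical. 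One small correction: inverting the uniformization shows only that $\wp^{(1,1)}\in\overline{\C(t)}\bigl(x(\omega,t)\bigr)\subset\overline{\cx}$, not that $\wp^{(1,1)}\in\cx$, because $a(t)$, $D'_{1}(a(t),t)$ and $D''_{1}(a(t),t)$ are merely algebraic over $\C(t)$; this is harmless, since your argument only ever uses algebraicity over $\cx$, exactly as provided by Lemma~\ref{lem5}.
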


\begin{proof}
Assume $h$ is non-constant in $\omega$, otherwise the statement is clear. Let $P(h,x,t)$ be a non-zero polynomial satisfying $P\bigl(h(\omega,t),x(\omega,t),t\bigr)=0$, of minimal degree in its first variable. Taking the derivative with respect to $\omega$ yields
\begin{equation*}
   \partial_{\omega}h(\omega,t)P_{h}\bigl(h(\omega,t),x(\omega,t),t\bigr)+\partial_{\omega}x(\omega,t)P_{x}\bigl(h(\omega,t),x(\omega,t),t\bigr)=0,
\end{equation*}
with the notation $P_{\star}=\partial_{\star} P$.
By minimality of $P$, $P_{h}$ is non-zero. Hence, $\frac{\partial_{\omega}h(\omega,t)}{\partial_{\omega}x(\omega,t)}$ is a rational function of $h(\omega,t)$, $x(\omega,t)$ and $t$, so it is algebraic over ${\cx}$. Applying this result to the specific case $h= \wp^{(1,1)}$, we obtain that $\frac{\partial_{\omega}\wp^{(1,1)}(\omega,t)}{\partial_{\omega}x(\omega,t)}$ is algebraic over ${\cx}$. Along with Lemma~\ref{lem5}, this shows that $\partial_{\omega}x(\omega,t)$ is algebraic over ${\cx}$, and so $\partial_{\omega}h(\omega,t)$ itself is algebraic over ${\cx}$. 
\end{proof}

\begin{lem}
\label{lem2}
We have $k^4 g_2^{(k,k)}=g_2^{(1,1)}$ and $k^6 g_3^{(k,k)}=g_3^{(1,1)}$. The invariants  $g_2^{(k,k)}$ and $g_3^{(k,k)}$ are thus algebraic over $\C(t)$ by Lemma~\ref{lem4}. 
\end{lem}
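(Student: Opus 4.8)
The plan is to exploit the elementary weight-$4$ and weight-$6$ homogeneity of the Weierstrass invariants under a rescaling of the lattice. Recall that by definition the lattice attached to $\wp^{(j,k)}$ is $j\omega_1(t)\Z+k\omega_2(t)\Z$; in particular the lattice of $\wp^{(k,k)}$ is $k\omega_1(t)\Z+k\omega_2(t)\Z=k\bigl(\omega_1(t)\Z+\omega_2(t)\Z\bigr)$, which is exactly $k$ times the lattice $L:=\omega_1(t)\Z+\omega_2(t)\Z$ underlying $\wp^{(1,1)}$. The whole lemma is then a consequence of this single observation, so the first step is simply to record it from the period prescription.

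Next I would write the invariants as the Eisenstein series associated with $L$,
\begin{equation*}
   g_2(L)=60\sum_{\lambda\in L\setminus\{0\}}\frac{1}{\lambda^4},\qquad
   g_3(L)=140\sum_{\lambda\in L\setminus\{0\}}\frac{1}{\lambda^6},
\end{equation*}
and then substitute $L\mapsto kL$. Reindexing the sum via $\lambda\mapsto k\lambda$, each summand picks up a factor $k^{-4}$ (resp.\ $k^{-6}$), so that $g_2^{(k,k)}=g_2(kL)=k^{-4}g_2(L)=k^{-4}g_2^{(1,1)}$ and $g_3^{(k,k)}=g_3(kL)=k^{-6}g_3^{(1,1)}$. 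Rearranging yields the two claimed identities $k^4g_2^{(k,k)}=g_2^{(1,1)}$ and $k^6g_3^{(k,k)}=g_3^{(1,1)}$.

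Finally, since $k$ is a fixed positive integer coming from \eqref{eq:ratio_periods}, hence a nonzero constant independent of $t$, multiplication by $k^{-4}$ (resp.\ $k^{-6}$) preserves algebraicity over $\C(t)$; the algebraicity of $g_2^{(k,k)}$ and $g_3^{(k,k)}$ over $\C(t)$ then follows at once from Lemma~\ref{lem4}. I do not expect any genuine obstacle here: the argument is the classical homogeneity of these modular invariants, and the only point requiring (trivial) care is recognising that the lattice of $\wp^{(k,k)}$ is literally $k$ times that of $\wp^{(1,1)}$, which is immediate from the period prescription $(j\omega_1,k\omega_2)$.
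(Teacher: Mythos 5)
Your proof is correct and follows essentially the same route as the paper: both identify the lattice of $\wp^{(k,k)}$ as $k$ times the lattice of $\wp^{(1,1)}$, express the invariants as Eisenstein series, and reindex the sum via $\lambda\mapsto k\lambda$ to extract the factors $k^{-4}$ and $k^{-6}$, with the algebraicity statement then following directly from Lemma~\ref{lem4}. The only cosmetic difference is that you write out the $g_3$ case explicitly (with the coefficient $140$), whereas the paper treats it as ``similar''.
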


\begin{proof}
Let $\Lambda$ be the lattice $\omega_1 \Z+\omega_2 \Z$.  We have 
\begin{equation*}
   g_2^{(k,k)}=60 \sum_{\lambda \in k\Lambda \setminus \{(0,0)\}}\frac{1}{\lambda^4}= 60 \sum_{\lambda \in\Lambda \setminus \{(0,0)\}}\frac{1}{(k\lambda)^4}=\frac{g_2^{(1,1)}}{k^{4}}.
\end{equation*}
The proof for  $g_3^{(k,k)}$ is similar.  
\end{proof}

We will use the following classical lemma several times. 
\begin{lem}
\label{lem:Wal}
For any integers $j_{1},j_{2}\geq 1$ and any $\lambda\in\mathbb{Q}\setminus\{0\}$, the function $\omega\mapsto \wp^{(j_{1},j_{2})}(\lambda \omega,t)$ is algebraic over $\C\bigl(g_2^{(j_{1},j_{2})},g_3^{(j_{1},j_{2})},\wp^{(j_{1},j_{2})}(\omega,t)\bigr)$.
\end{lem}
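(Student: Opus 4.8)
The plan is to reduce the statement to the classical multiplication formula for the Weierstrass function, combined with the observation that division by an integer is its inverse operation. Throughout, fix the lattice and abbreviate $\wp=\wp^{(j_1,j_2)}$, $g_2=g_2^{(j_1,j_2)}$, $g_3=g_3^{(j_1,j_2)}$. Since $\wp$ is even in $\omega$, we have $\wp(\lambda\omega,t)=\wp(|\lambda|\omega,t)$, so without loss of generality we may take $\lambda=p/q$ with $p,q\geq 1$ integers.

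First I would recall the multiplication formula: for every positive integer $n$, the function $\omega\mapsto\wp(n\omega,t)$ is a rational function $R_n$ of $\wp(\omega,t)$ whose coefficients lie in $\Q(g_2,g_3)$. This is classical and follows by induction on $n$ from the addition theorem
\begin{equation*}
   \wp(u+v)=-\wp(u)-\wp(v)+\frac{1}{4}\left(\frac{\partial_{\omega}\wp(u)-\partial_{\omega}\wp(v)}{\wp(u)-\wp(v)}\right)^{2},
\end{equation*}
where at each step one uses the differential equation \eqref{eq:classical_diffeq_P} to eliminate even powers of $\partial_{\omega}\wp$ in favour of polynomials in $\wp$, while odd powers cannot appear since $\wp(n\omega,t)$ is an even function of $\omega$ (alternatively one may invoke the division polynomials). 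Next I would introduce the auxiliary function $w(\omega,t):=\wp(\omega/q,t)$. Applying the multiplication formula with $n=q$ yields $\wp(\omega,t)=R_q\bigl(w(\omega,t)\bigr)$, a non-trivial polynomial relation over $\Q(g_2,g_3)$ between $w$ and $\wp(\omega,t)$; since $R_q$ is non-constant, this shows that $w$ is algebraic over $\C\bigl(g_2,g_3,\wp(\omega,t)\bigr)$. Applying the formula instead with $n=p$ gives $\wp(\lambda\omega,t)=\wp\bigl(p\cdot\omega/q,t\bigr)=R_p\bigl(w(\omega,t)\bigr)$, so that $\wp(\lambda\omega,t)$ lies in $\C(g_2,g_3,w)$.

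Finally I would combine the two relations: $\wp(\lambda\omega,t)\in\C(g_2,g_3,w)$, while $w$ is algebraic over $\C\bigl(g_2,g_3,\wp(\omega,t)\bigr)$, so that $\C(g_2,g_3,w)$ is an algebraic extension of $\C\bigl(g_2,g_3,\wp(\omega,t)\bigr)$. By transitivity of algebraicity in a tower of fields, $\wp(\lambda\omega,t)$ is algebraic over $\C\bigl(g_2,g_3,\wp(\omega,t)\bigr)$, as required. I do not expect a genuine obstacle, as the result is entirely classical; the only point demanding care is the direction of the algebraic dependence. The crux is that the relation $\wp(\omega,t)=R_q(w)$ exhibits $\wp(\omega,t)$ as a rational expression in $w$, so that it is $w$ (and not merely $\wp(\omega,t)$) that becomes algebraic over the smaller field, which is exactly what is needed to feed the conclusion through the tower.
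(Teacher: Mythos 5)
Your proof is correct and follows essentially the same route as the paper: both establish the integer case by induction on the addition formula (using the differential equation to eliminate derivatives of $\wp$), and both reduce the rational case $\lambda=p/q$ to the integer case by substituting $\omega/q$ for $\omega$ and concluding by transitivity of algebraic dependence. Your packaging via the explicit multiplication formula $\wp(n\omega,t)=R_n\bigl(\wp(\omega,t)\bigr)$ is a slightly sharper formulation than the paper's symmetric ``algebraically related'' statements, and it makes the direction-of-dependence point you flag transparent, but the underlying argument is the same.
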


Let us first recall three identities satisfied by the Weierstrass elliptic function, namely, a second order differential equation \cite[(3.16.3)]{JoSi-87}, the addition formula \cite[(3.17.9)]{JoSi-87} and the duplication formula \cite[(3.17.10)]{JoSi-87}:
\begin{align}
\label{eq:classical_diffeq_P_2}
   \partial_{\omega}^2 \wp(\omega)&=6\wp^2(\omega)-\frac{g_2}{2},\\
   \wp(m \omega )+\wp(n \omega)+\wp((m+n) \omega )
&=\frac{1}{4}\left(\frac{\partial_{\omega}\wp(m \omega)-\partial_{\omega}\wp(n \omega)}{\wp(m \omega )-\wp(n \omega)}\right)^2,\label{eq:addition_formula}\\
   \wp(2\omega)+2\wp(\omega)
&=\frac{1}{4}\left(\frac{\partial_{\omega}^2 \wp(\omega)}{\partial_{\omega}\wp(\omega)}\right)^2.\label{eq:duplication_formula}
\end{align}

\begin{proof}[Proof of Lemma~\ref{lem:Wal}]
The formula \eqref{eq:duplication_formula} implies that $\wp^{(j_{1},j_{2})}(2\omega,t)$ is algebraically related to  $\wp^{(j_{1},j_{2})}(\omega,t)$ over the 
field $\C\bigl(\partial_{\omega} \wp^{(j_{1},j_{2})}(\omega,t), \partial_{\omega}^2 \wp^{(j_{1},j_{2})}(\omega,t)\bigr)$. 
Using the differential equations \eqref{eq:classical_diffeq_P} and \eqref{eq:classical_diffeq_P_2} satisfied by  $\wp^{(j_{1},j_{2})}$, we obtain that the first and second derivatives of $\wp^{(j_{1},j_{2})}(\omega,t)$ are algebraically related to  $ \wp^{(j_{1},j_{2})}(\omega,t)$ over $\C(g_2^{(j_{1},j_{2})},g_3^{(j_{1},j_{2})})$. As a consequence, Lemma~\ref{lem:Wal} is proved for $\lambda=2$. 

We now prove by induction that for all integers $m\geq 1$, $\wp^{(j_{1},j_{2})}(m\omega,t)$    is algebraically related to $\wp^{(j_{1},j_{2})}(\omega,t)$ over $\C\bigl(g_2^{(j_{1},j_{2})},g_3^{(j_{1},j_{2})}\bigr)$. To that purpose, use the addition formula \eqref{eq:addition_formula} with $n=1$ to deduce that if 
 $\wp^{(j_{1},j_{2})}(m\omega,t)$  with $m>1$ is algebraically related to $\wp^{(j_{1},j_{2})}(\omega,t)$ over $\C\bigl(g_2^{(j_{1},j_{2})},g_3^{(j_{1},j_{2})}\bigr)$, the same holds for 
  $\wp^{(j_{1},j_{2})}((m+1)\omega,t)$.  Note that $\wp^{(j_{1},j_{2})}(-m\omega,t)=\wp^{(j_{1},j_{2})}(m\omega,t)$ so that the result is proved for all $\lambda\in \Z$.

Finally, for $m,n \neq 0$, the above result with $\omega,m$ replaced by $\frac{m\omega}{n},n$ implies that $\wp^{(j_{1},j_{2})}(m\omega,t)$ is algebraically related to $\wp^{(j_{1},j_{2})}(\frac{m}{n}\omega,t)$ over $\C\bigl(g_2^{(j_{1},j_{2})},g_3^{(j_{1},j_{2})}\bigr)$. Hence, since algebraic dependency is a transitive relation, for all $m,n \neq 0$, $\wp^{(j_{1},j_{2})}(\frac{m}{n}\omega,t)$    is algebraically related to $\wp^{(j_{1},j_{2})}(\omega,t)$ over $\C\bigl(g_2^{(j_{1},j_{2})},g_3^{(j_{1},j_{2})}\bigr)$. The proof is complete.
\end{proof}

\begin{lem}
\label{lem6}
The elliptic functions $\wp^{(k,k)}(\omega,t)$ and $\partial_{\omega}\wp^{(k,k)} (\omega,t)$ are algebraic over ${\cx}$. 
\end{lem}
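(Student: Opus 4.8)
The plan is to prove Lemma~\ref{lem6} by relating the Weierstrass function $\wp^{(k,k)}$ with periods $(k\omega_1,k\omega_2)$ to the function $\wp^{(1,1)}$ with periods $(\omega_1,\omega_2)$, which is already known to be algebraic over $\cx$ by Lemma~\ref{lem5}. The key observation is the homogeneity of the Weierstrass function under lattice scaling: since the lattice $k\Lambda$ is simply $\Lambda$ scaled by $k$, we have the classical identity $\wp^{(k,k)}(\omega,t)=\frac{1}{k^2}\wp^{(1,1)}(\omega/k,t)$. First I would make this scaling relation explicit, which immediately reduces the problem to understanding $\wp^{(1,1)}(\omega/k,t)$.

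The heart of the argument is then to invoke Lemma~\ref{lem:Wal} with $(j_1,j_2)=(1,1)$ and $\lambda=1/k$. This lemma tells us precisely that $\omega\mapsto\wp^{(1,1)}(\omega/k,t)$ is algebraic over the field $\C\bigl(g_2^{(1,1)},g_3^{(1,1)},\wp^{(1,1)}(\omega,t)\bigr)$. By Lemma~\ref{lem4} the invariants $g_2^{(1,1)},g_3^{(1,1)}$ are algebraic over $\C(t)$, hence over $\cx$, and by Lemma~\ref{lem5} the function $\wp^{(1,1)}(\omega,t)$ is algebraic over $\cx$ as well. Since algebraicity is transitive and a tower of algebraic extensions is algebraic, it follows that $\wp^{(1,1)}(\omega/k,t)$ is algebraic over $\cx$. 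Combining this with the scaling identity and the fact that $k^{-2}\in\C(t)\subset\cx$, I conclude that $\wp^{(k,k)}(\omega,t)$ is algebraic over $\cx$.

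For the derivative $\partial_{\omega}\wp^{(k,k)}(\omega,t)$, I would appeal to Lemma~\ref{lem:deriv}, which states that the $\omega$-derivative of any function algebraic over $\cx$ is again algebraic over $\cx$; applying it to $h=\wp^{(k,k)}$ finishes the proof. Alternatively, one could use the differential equation \eqref{eq:classical_diffeq_P} for $\wp^{(k,k)}$ together with Lemma~\ref{lem2} (giving algebraicity of $g_2^{(k,k)},g_3^{(k,k)}$), exactly as in the proof of Lemma~\ref{lem5}; either route is routine.

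I do not expect a serious obstacle here, since all the analytic input has been packaged into the preceding lemmas; the only point requiring a little care is bookkeeping with the scaling factor $1/k$ in Lemma~\ref{lem:Wal} (that lemma is stated for arbitrary nonzero rational $\lambda$, so $\lambda=1/k$ is covered directly) and ensuring the constant fields line up, namely that $g_2^{(1,1)},g_3^{(1,1)}$ being algebraic over $\C(t)$ suffices to treat them as algebraic over $\cx$ inside the transitivity argument.
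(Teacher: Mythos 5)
Your proposal is correct and follows essentially the same route as the paper: both rest on the lattice-scaling identity $\wp^{(k,k)}(k\omega,t)=\tfrac{1}{k^2}\wp^{(1,1)}(\omega,t)$, an application of Lemma~\ref{lem:Wal}, and transitivity of algebraicity via Lemmas~\ref{lem4}, \ref{lem5} and \ref{lem2}. The only cosmetic difference is that you apply Lemma~\ref{lem:Wal} to $\wp^{(1,1)}$ with $\lambda=1/k$ (using Lemma~\ref{lem4} for the invariants), whereas the paper applies it to $\wp^{(k,k)}$ with $\lambda=k$ (using Lemma~\ref{lem2}); likewise your use of Lemma~\ref{lem:deriv} for the derivative is a valid substitute for the paper's appeal to the differential equation \eqref{eq:classical_diffeq_P}, which you also note as an alternative.
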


\begin{proof}
Using a reasoning similar as in Lemma~\ref{lem2}, we obtain that  
\begin{equation*}
   \wp^{(k,k)}(k\omega ,t) = \frac{1}{k^2}\wp^{(1,1)}(\omega ,t).
\end{equation*}
By Lemma~\ref{lem5}, $\wp^{(k,k)}(k\omega,t)$ is algebraic over ${\cx}$. It then follows from Lemma~\ref{lem:Wal} that the function $\wp^{(k,k)}(\omega,t)$ is algebraic over ${\cx}$. 
 Finally, since $\partial_{\omega}\wp^{(k,k)}(\omega ,t)$ is algebraic over the field ${\C\bigl(g_2^{(k,k)},g_3^{(k,k)},\wp^{(k,k)}(\omega,t)\bigr)},$ the result for $\partial_{\omega}\wp^{(k,k)} (\omega,t)$ follows from Lemma~\ref{lem2}. 
\end{proof}

Proving the algebraicity of $ \wp^{(1,k)}$ happens to be more delicate.  We first need to show the algebraicity of values of $\wp^{(k,k)}$ at rational multiples of $\omega_1$ and $\omega_2$.  

\begin{lem}
\label{lem3}
For all $\lambda\in \Q\cap (0,k)$, the quantities $\wp^{(k,k)}(\lambda \omega_1(t),t)$ and $\partial_{\omega}\wp^{(k,k)}(\lambda \omega_1(t),t)$ are algebraic over $\C(t)$. A similar statement holds when $\omega_1$ is replaced by $\omega_2$. 
\end{lem}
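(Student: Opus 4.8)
The plan is to prove that $\wp^{(k,k)}(\lambda\omega_1,t)$ is algebraic over $\C(t)$ for rational $\lambda\in(0,k)$, using the fact that $\lambda\omega_1$ is a torsion point of the lattice $k\Lambda = k\omega_1\Z + k\omega_2\Z$. Write $\lambda = p/q$ with $p,q$ coprime positive integers; then $q\cdot(\lambda\omega_1) = p\,\omega_1$, which is an integer multiple of $\omega_1$ and hence lies in the period lattice $k\Lambda$ whenever $p$ is a multiple of $k$, but more importantly $\lambda\omega_1$ is a $q$-torsion point relative to $k\Lambda$ up to a lattice vector. The standard fact is that the values of $\wp^{(k,k)}$ at the nonzero $q$-torsion points are roots of the so-called division polynomial $\psi_q$, whose coefficients are polynomials in $g_2^{(k,k)}$ and $g_3^{(k,k)}$ with rational (in fact integer) coefficients.

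Concretely, I would invoke the $q$-division polynomial of the Weierstrass function: there is a monic (up to normalisation) polynomial $\Phi_q(X)$ with coefficients in $\Q[g_2^{(k,k)}, g_3^{(k,k)}]$ such that for any $q$-torsion point $u$ (i.e.\ $qu\in k\Lambda$, $u\notin k\Lambda$), the value $\wp^{(k,k)}(u,t)$ is a root of $\Phi_q$. Since $q(\lambda\omega_1) = p\,\omega_1 \in \omega_1\Z \subset \frac{1}{k}(k\Lambda)$, one checks that $\lambda\omega_1$ is a torsion point of the appropriate order relative to $k\Lambda$, so $\wp^{(k,k)}(\lambda\omega_1,t)$ satisfies a polynomial equation over $\C(g_2^{(k,k)},g_3^{(k,k)})$. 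By Lemma~\ref{lem2}, the invariants $g_2^{(k,k)}$ and $g_3^{(k,k)}$ are algebraic over $\C(t)$, so $\wp^{(k,k)}(\lambda\omega_1,t)$ is algebraic over $\C(t)$ as well. The claim for $\partial_\omega\wp^{(k,k)}(\lambda\omega_1,t)$ then follows from the differential equation~\eqref{eq:classical_diffeq_P}, which expresses $(\partial_\omega\wp^{(k,k)})^2$ as a polynomial in $\wp^{(k,k)}$, $g_2^{(k,k)}$, $g_3^{(k,k)}$ — all already known to be algebraic over $\C(t)$. The statement with $\omega_1$ replaced by $\omega_2$ is handled identically, since $\omega_2$ is just another generator of the lattice.

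An alternative route, which may be cleaner to write, avoids naming the division polynomials explicitly and instead uses Lemma~\ref{lem:Wal} together with Lemma~\ref{lem6}. Since $\lambda\omega_1 = \lambda\omega_1$ and $\wp^{(k,k)}$ evaluated along the real/imaginary axis generated by $\omega_1$ can be reached by the multiplication formula: $\wp^{(k,k)}(\lambda\omega_1,t) = \wp^{(k,k)}(\lambda\cdot\omega_1,t)$, and by Lemma~\ref{lem:Wal} the function $\omega\mapsto\wp^{(k,k)}(\lambda\omega,t)$ is algebraic over $\C(g_2^{(k,k)},g_3^{(k,k)},\wp^{(k,k)}(\omega,t))$. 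Specialising $\omega=\omega_1$ and using that $\wp^{(k,k)}(\omega_1,t)$ is a root of the cubic $4X^3 - g_2^{(k,k)}X - g_3^{(k,k)}$ (because $\omega_1$ is a half-period of the lattice $k\Lambda$ only when $k=1$; in general $\omega_1$ is a genuine $k$-torsion point whose $\wp$-value satisfies a division relation), one again reduces to the algebraicity of the invariants via Lemma~\ref{lem2}.

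The main obstacle I anticipate is bookkeeping the exact torsion order and lattice relationship: $\omega_1$ and $\omega_2$ are generators of $\Lambda$, not of $k\Lambda$, so relative to the lattice $k\Lambda$ of $\wp^{(k,k)}$ the point $\lambda\omega_1$ with $\lambda\in(0,k)$ is a torsion point whose order divides $k\cdot(\text{denominator of }\lambda)$; one must verify it is genuinely torsion (i.e.\ that $\lambda\omega_1\in\frac1N(k\Lambda)$ for some integer $N$) and not a pole, and handle separately the finitely many $\lambda$ for which $\lambda\omega_1\in k\Lambda$ (where $\wp^{(k,k)}$ has a pole and the value is $\infty$, trivially "algebraic" in the extended sense). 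The cleanest formulation uses the classical result that torsion values of $\wp$ are algebraic over the field generated by the invariants; I would cite the division-polynomial theory (e.g.\ the standard reference already in use, \cite{JoSi-87}) rather than reprove it. Once the torsion-point observation is correctly set up, the remaining deduction via Lemma~\ref{lem2} and equation~\eqref{eq:classical_diffeq_P} is routine.
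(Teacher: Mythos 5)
Your first route (via division polynomials) is correct, and it is genuinely different from the paper's argument. The paper anchors at the half-period $k\omega_1(t)/2$ of the lattice $k\omega_1(t)\Z+k\omega_2(t)\Z$: there $\partial_{\omega}\wp^{(k,k)}$ vanishes, so $\wp^{(k,k)}(k\omega_1(t)/2,t)$ is a root of $4X^3-g_2^{(k,k)}X-g_3^{(k,k)}$ and hence algebraic over $\C(t)$ by Lemma~\ref{lem2}; it then applies Lemma~\ref{lem:Wal} at the level of functions of an auxiliary variable $x$, making $x\mapsto\wp^{(k,k)}(x\lambda\omega_1(t),t)$ algebraic over the field generated by $x\mapsto\wp^{(k,k)}(xk\omega_1(t)/2,t)$, and finally specializes at $x=1$, multiplying the polynomial relation by a suitable power of $x-1$ precisely to rule out the degeneration such a specialization can cause. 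Your torsion-point argument sidesteps that delicate specialization entirely: $\lambda\omega_1=(p/q)\omega_1$ is a nonzero $qk$-torsion point of the lattice of $\wp^{(k,k)}$ (and for $\lambda\in(0,k)$ it never lies in that lattice, since $\omega_1$ and $\omega_2$ are $\R$-linearly independent, so the exceptional case you propose to treat separately is in fact empty), and the classical fact that the division polynomial $\psi_{qk}^{2}$ is a nonzero polynomial in $\wp^{(k,k)}$ with coefficients in $\Q\bigl[g_2^{(k,k)},g_3^{(k,k)}\bigr]$ gives algebraicity over $\C(t)$ via Lemma~\ref{lem2}; the derivative then follows from \eqref{eq:classical_diffeq_P} exactly as in the paper, and the $\omega_2$ case is identical. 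What your route buys is brevity and no degeneracy bookkeeping; what it costs is importing division-polynomial theory from outside, whereas the paper stays within its own Lemma~\ref{lem:Wal}, which it proves from the addition and duplication formulas. Be aware, though, that your ``alternative route'' is not sound as written: $\wp^{(k,k)}(\omega_1,t)$ is a root of the cubic only if $\omega_1$ is a half-period of $k\omega_1\Z+k\omega_2\Z$, i.e.\ only for $k\leq 2$ (for $k=1$ it is even a pole), and the unjustified specialization $\omega=\omega_1$ of the function-level relation in Lemma~\ref{lem:Wal} is exactly the gap that the paper's $(x-1)$ device is designed to close; the paper's choice of $k\omega_1/2$ as base point is the correct repair of that sketch.
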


\begin{proof}
The proof when $\omega_1$ is replaced by $\omega_2$ is similar, so we will focus on $\omega_1$.  
Since $\partial_{\omega}  \wp^{(k,k)}(k\omega_1 (t)/2,t)=0$, it follows from the differential equation satisfied by $\wp^{(k,k)}$ that  $\wp^{(k,k)}(k\omega_1(t)/2,t)$ is algebraic over $\C(g_2^{(k,k)},g_3^{(k,k)})$, and hence over $\mathbb{C}(t)$ by Lemma~\ref{lem2}. 

 Let $\lambda\in \Q\cap (0,k)$.  By Lemma~\ref{lem:Wal}  
 the function $x\mapsto \wp^{(k,k)}(x\lambda  \omega_1(t) ,t)$  is algebraic over 
 \begin{equation}
 \label{eq:ext}
     \C\left(g_2^{(k,k)},g_3^{(k,k)},{\wp^{(k,k)}(x k\omega_1 (t)/2 ,t)}\right).
 \end{equation}
 Hence, there exists some non-trivial polynomial $P_{x}$ with coefficients in \eqref{eq:ext} satisfying \begin{equation}
     \label{eq:pol_Px}
     P_{x}\bigl(\wp^{(k,k)}(x\lambda  \omega_1(t) ,t)\bigr)=0
 \end{equation} for all $x$.
 We multiply  $P_{x}$ by an appropriate power of $x-1$, so that at least one of its coefficients does not vanish at $x=1$ and none of them has a pole at $x=1$. At $x=1$, the relation \eqref{eq:pol_Px} reduces to a non-trivial polynomial of $\wp^{(k,k)}(\lambda  \omega_1(t) ,t)$ with coefficients in $\C\bigl(t,g_2^{(k,k)},g_3^{(k,k)},\wp^{(k,k)}(k\omega_1(t) /2,t)\bigr)$. Since $g_2^{(k,k)}$, $g_3^{(k,k)}$ and $\wp^{(k,k)}(k\omega_1(t) /2,t)$ are algebraic over $\C(t)$, this implies that $\wp^{(k,k)}(\lambda\omega_1(t) ,t)$ is also algebraic over $\C(t)$.    
This proves the lemma for $\wp^{(k,k)}(\lambda\omega_1(t) ,t)$. Finally, the result follows as $\partial_{\omega}\wp^{(k,k)}(\lambda \omega_1(t) ,t)$ is algebraic over the field 
\begin{equation*}
   {\C\bigl(g_2^{(k,k)},g_3^{(k,k)},\wp^{(k,k)}(\lambda \omega_1(t) ,t)\bigr)}.\qedhere
\end{equation*}
\end{proof}

\begin{lem}
\label{lem7}
The elliptic functions $\wp^{(1,k)}(\omega,t)$ and $\partial_{\omega}\wp^{(1,k)}(\omega,t)$ are algebraic over ${\cx}$. 
\end{lem}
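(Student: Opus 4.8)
The plan is to relate $\wp^{(1,k)}$ to $\wp^{(k,k)}$, whose algebraicity over $\cx$ has already been established in Lemma~\ref{lem6}, together with the algebraicity of special values in Lemma~\ref{lem3}. The lattice of $\wp^{(1,k)}$ is $\omega_1\Z + k\omega_2\Z$, while the lattice of $\wp^{(k,k)}$ is $k\omega_1\Z + k\omega_2\Z$, which is a sublattice of index $k$. Thus $\wp^{(k,k)}$ is a $\wp^{(1,k)}$-periodic function (every period of $\wp^{(1,k)}$ is a period of $\wp^{(k,k)}$), so classically $\wp^{(k,k)}$ can be expanded as a rational function of $\wp^{(1,k)}$ and $\partial_\omega\wp^{(1,k)}$, or more precisely $\wp^{(k,k)}$ is an elliptic function for the finer lattice $\omega_1\Z+k\omega_2\Z$ and hence lies in $\C(\wp^{(1,k)},\partial_\omega\wp^{(1,k)})$. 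I would like to invert this relationship, i.e.\ express $\wp^{(1,k)}$ algebraically in terms of $\wp^{(k,k)}$.

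First I would invoke the standard isogeny/summation formula for coarsening a lattice: since $\omega_1\Z+k\omega_2\Z$ decomposes $k\omega_1\Z+k\omega_2\Z$ into $k$ cosets with representatives $j\omega_1$ for $j=0,\dots,k-1$, one has the identity
\begin{equation*}
   \wp^{(1,k)}(\omega,t) = \sum_{j=0}^{k-1}\wp^{(k,k)}(\omega + j\omega_1(t),t) + c(t),
\end{equation*}
for a suitable constant $c(t)$ (depending only on $t$) ensuring the correct normalisation of the Laurent expansion at the origin. By Lemma~\ref{lem6}, each translate $\wp^{(k,k)}(\omega+j\omega_1,t)$ is algebraic over $\cx$: indeed $\wp^{(k,k)}(\omega+j\omega_1,t)$ is expressed through the addition formula \eqref{eq:addition_formula} in terms of $\wp^{(k,k)}(\omega,t)$, $\partial_\omega\wp^{(k,k)}(\omega,t)$ and the special values $\wp^{(k,k)}(j\omega_1,t)$, $\partial_\omega\wp^{(k,k)}(j\omega_1,t)$, all of which are algebraic over $\cx$ (the first two by Lemma~\ref{lem6}, the special values by Lemma~\ref{lem3}). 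Since a finite sum of quantities algebraic over $\cx$, plus a constant $c(t)$ algebraic over $\C(t)\subset\cx$, is again algebraic over $\cx$, this yields the algebraicity of $\wp^{(1,k)}$.

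The algebraicity of $\partial_\omega\wp^{(1,k)}$ then follows from Lemma~\ref{lem:deriv}, which guarantees that the $\omega$-derivative of any function algebraic over $\cx$ is again algebraic over $\cx$; alternatively one uses the differential equation \eqref{eq:classical_diffeq_P} for $\wp^{(1,k)}$ together with the algebraicity of $g_2^{(1,k)},g_3^{(1,k)}$ over $\C(t)$. I expect the main obstacle to be justifying the precise summation identity over cosets and pinning down the normalising constant $c(t)$ as algebraic over $\C(t)$: one must verify that the pole structure of the coset-sum at the origin matches that of $\wp^{(1,k)}$ (both have a unique double pole per fundamental domain with principal part $\omega^{-2}$, the other translates contributing no pole at $0$), so that the difference is a holomorphic, hence constant, function of $\omega$, and then argue that this constant is algebraic over $\C(t)$ by reading off a low-order coefficient in the Laurent expansion and applying Lemma~\ref{rem1} to the invariants. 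Once that bookkeeping is done, the algebraicity of $\wp^{(1,k)}$ and $\partial_\omega\wp^{(1,k)}$ over $\cx$ is immediate.
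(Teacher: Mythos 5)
Your proposal is correct and follows essentially the same route as the paper's own proof: the coset-summation identity $\sum_{j=0}^{k-1}\wp^{(k,k)}(\omega+j\omega_1(t),t)=\wp^{(1,k)}(\omega,t)+\sum_{j=1}^{k-1}\wp^{(k,k)}(j\omega_1(t),t)$ (so your constant $c(t)$ is exactly $-\sum_{j=1}^{k-1}\wp^{(k,k)}(j\omega_1(t),t)$, algebraic over $\C(t)$ by Lemma~\ref{lem3}), established by the same pole-matching argument at $\omega=0$, then the addition formula \eqref{eq:addition_formula} together with Lemmas~\ref{lem6} and~\ref{lem3} to get algebraicity of the sum over ${\cx}$. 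The derivative is handled via Lemma~\ref{lem:deriv}, exactly as in the paper (note that your alternative via $g_2^{(1,k)},g_3^{(1,k)}$ would be circular there, since their algebraicity is Lemma~\ref{lem8}, which the paper deduces from Lemma~\ref{lem7}).
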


\begin{proof}
Let us recall the classical identity
\begin{equation}\label{eq2}
\sum_{\ell=0}^{k-1} \wp^{(k,k)}(\omega + \ell\omega_1 (t) ,t)=\wp^{(1,k)}(\omega,t)+\sum_{\ell=1}^{k-1} \wp^{(k,k)}( \ell\omega_1 (t),t),
\end{equation}
which is obtained as follows. Both sides are $(\omega_1 (t) ,k\omega_2(t))$-periodic with only potential poles at $\omega_1(t)\Z+k\omega_2(t)\Z$.  The expansion at $\omega = 0$ of both sides is $\omega^{-2}+O(\omega)$, proving that the difference
\begin{equation*}
   d(\omega):=\sum_{\ell=0}^{k-1} \wp^{(k,k)} (\omega + \ell\omega_1 (t),t)-\wp^{(1,k)}(\omega,t)-\sum_{\ell=1}^{k-1} \wp^{(k,k)}( \ell\omega_1(t) ,t)
\end{equation*}
has no pole at $\omega=0$. 
Hence, $d(\omega)$ is an $(\omega_1 (t) ,k\omega_2(t))$-periodic function with no poles, so it is constant. Moreover, expanding one term further at $\omega = 0$ shows that $d(0)=0$, so $d(\omega)=0$ for all $\omega$.  This proves \eqref{eq2}. We note that \eqref{eq2} can alternatively be deduced directly from the definition of $\wp$.

We now use the addition formula \eqref{eq:addition_formula} of the Weierstrass function 
\begin{multline*}
\wp^{(k,k)}(\omega,t)+\wp^{(k,k)}(\ell\omega_1 (t),t)+\wp^{(k,k)}(\omega+\ell\omega_1 (t) ,t)\\
=\frac{1}{4}\left(\frac{\partial_{\omega}\wp^{(k,k)}(\omega,t)-\partial_{\omega}\wp^{(k,k)}(\ell\omega_1 (t) ,t)}{\wp^{(k,k)}(\omega,t)-\wp^{(k,k)}(\ell\omega_1 (t) ,t)}\right)^2, 
\end{multline*}   
which shows that the sum
$
\sum_{\ell=0}^{k-1} \wp^{(k,k)}(\omega + \ell\omega_1 (t) ,t)$ belongs to 
\begin{equation*}
   \Q\bigl(\wp^{(k,k)}(\omega,t),  \wp^{(k,k)}(\ell\omega_1 (t) ,t),\partial_{\omega}\wp^{(k,k)}(\omega,t),  \partial_{\omega}\wp^{(k,k)}(\ell\omega_1 (t) ,t)\bigr)_{0<\ell<k}.
\end{equation*}
By Lemmas~\ref{lem6} and \ref{lem3},  this sum  is algebraic over ${\cx}$.  Furthermore,  by Lemma~\ref{lem3} again,  $\sum_{\ell=1}^{k-1} \wp^{(k,k)}( \ell\omega_1  (t),t)$ is algebraic over ${\cx}$. Using \eqref{eq2}, this allows us to conclude that  $\wp^{(1,k)}(\omega,t)$  is algebraic over ${\cx}$. Finally, by Lemma~\ref{lem:deriv}, $\partial_{\omega} \wp^{(1,k)}$ is also algebraic over ${\cx}$.
\end{proof}

\begin{lem}
\label{lem8}
The invariants $g_2^{(1,k)}$ and $g_3^{(1,k)}$ are algebraic over $\mathbb C(t)$.  
\end{lem}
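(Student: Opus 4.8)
The goal is to show that the invariants $g_2^{(1,k)}$ and $g_3^{(1,k)}$ associated with the lattice $\omega_1\Z+k\omega_2\Z$ are algebraic over $\C(t)$. My plan is to extract these invariants from the Laurent expansion of the already-controlled function $\wp^{(1,k)}(\omega,t)$ at the origin. Recall the classical expansion
\begin{equation*}
   \wp^{(1,k)}(\omega,t)=\frac{1}{\omega^2}+\frac{g_2^{(1,k)}}{20}\omega^2+\frac{g_3^{(1,k)}}{28}\omega^4+O(\omega^6),
\end{equation*}
so $g_2^{(1,k)}$ and $g_3^{(1,k)}$ appear (up to explicit rational constants) as coefficients in this expansion. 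By Lemma~\ref{lem7}, $\wp^{(1,k)}(\omega,t)$ is algebraic over $\cx=\C(x(\omega,t),t)$.

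The key step is to convert this $\omega$-algebraicity into $t$-algebraicity of the two invariants. First I would note, using the classical second-order differential equation
\begin{equation*}
   \partial_\omega^2\wp^{(1,k)}=6\bigl(\wp^{(1,k)}\bigr)^2-\frac{g_2^{(1,k)}}{2},
\end{equation*}
that one does not even need the full Laurent expansion: the invariant $g_2^{(1,k)}$ is a $\C$-linear combination of $\partial_\omega^2\wp^{(1,k)}$ and $(\wp^{(1,k)})^2$, both of which are algebraic over $\cx$ by Lemma~\ref{lem7} (the second derivative being handled via Lemma~\ref{lem:deriv} applied twice). A parallel relation, obtained by differentiating the first-order equation \eqref{eq:classical_diffeq_P} or by pushing the expansion one order further, expresses $g_3^{(1,k)}$ as a polynomial in $\wp^{(1,k)}$ and its derivatives with rational coefficients, hence it too is algebraic over $\cx$. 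So both invariants lie in $\overline{\cx}$.

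It remains to descend from $\overline{\cx}$ to $\overline{\C(t)}$. The point is that $g_2^{(1,k)}$ and $g_3^{(1,k)}$ are functions of $t$ alone, i.e. they are constant in $\omega$ (they are elements of the constant field $\mathbf{k}_t$ in the notation of Section~\ref{seccrit}). A quantity that is algebraic over $\cx=\C(x(\omega,t),t)$ and simultaneously independent of $\omega$ must already be algebraic over $\C(t)$: indeed, if $g$ satisfies a polynomial relation $P(g,x(\omega,t),t)=0$ with $P$ of minimal degree in its first argument, then specialising $\omega$ to a value $\omega_*$ for which $x(\omega_*,t)$ is algebraic over $\C(t)$ (for instance $\omega_*=\omega_1/2$, where $x$ takes a branch-point value, or any $\omega_*$ with $x(\omega_*,t)\in\overline{\C(t)}$) produces a non-trivial algebraic relation for $g$ over $\C(t)$. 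One must only check that the specialised coefficients do not all vanish, which can be arranged exactly as in the proof of Lemma~\ref{lem3} by clearing an appropriate power before specialising.

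The main obstacle I anticipate is this last descent argument: one needs to guarantee that a well-chosen specialisation of $\omega$ keeps the relevant algebraic relation non-trivial while placing $x(\omega,t)$ in $\overline{\C(t)}$. This is precisely the kind of technicality already resolved in Lemma~\ref{lem3}, so in practice I expect this Lemma to be stated and proved as a short consequence of Lemma~\ref{lem7} together with the differential equations for $\wp$; the cleanest route is simply to observe that $g_2^{(1,k)},g_3^{(1,k)}$ are $\omega$-independent elements of $\overline{\cx}$ and invoke the fact that $\overline{\cx}\cap\C(t)\text{-constants}=\overline{\C(t)}$, which follows because $x(\omega,t)$ is genuinely transcendental over $\C(t)$ as a function of $\omega$.
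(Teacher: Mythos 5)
Your proposal is correct, and its core is exactly the paper's proof: the paper likewise obtains $g_2^{(1,k)}$ from the second-order equation \eqref{eq:classical_diffeq_P_2} (using Lemmas~\ref{lem:deriv} and~\ref{lem7} to control $\partial_{\omega}^2\wp^{(1,k)}$) and then $g_3^{(1,k)}$ from the first-order equation \eqref{eq:classical_diffeq_P}; the Laurent-expansion framing you open with is never needed, as you yourself note. Two remarks. First, a wording slip: differentiating \eqref{eq:classical_diffeq_P} in $\omega$ just reproduces \eqref{eq:classical_diffeq_P_2} and \emph{eliminates} $g_3^{(1,k)}$, so it cannot yield the invariant; the correct move---and the paper's---is simply to solve \eqref{eq:classical_diffeq_P} for $g_3^{(1,k)}=4\bigl(\wp^{(1,k)}\bigr)^3-g_2^{(1,k)}\wp^{(1,k)}-\bigl(\partial_{\omega}\wp^{(1,k)}\bigr)^2$, which lies in $\overline{\cx}$ because every term on the right does. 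Second, your explicit descent from $\overline{\cx}$ to $\overline{\C(t)}$ is a genuine addition rather than a deviation: the paper's proof stops at ``algebraic over ${\cx}$'' and leaves implicit the fact that an $\omega$-independent element of $\overline{\cx}$ is algebraic over $\C(t)$ (this is the identification $\mathbf{k}_t=\overline{\C(t)}$ invoked elsewhere, e.g.\ in Lemma~\ref{lem:alg_coeffs}). Your specialisation argument---or, more cleanly, the observation that the field of $\partial_{\omega}$-constants of $\overline{\cx}$ is $\overline{\C(t)}$ because $\omega\mapsto x(\omega,t)$ is non-constant, hence transcendental over $\C(t)$---supplies exactly that missing step, so your write-up is if anything more complete than the paper's.
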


\begin{proof} 
Combining Lemmas~\ref{lem:deriv} and~\ref{lem7},  $\partial_{\omega}^2 \wp^{(1,k)}$ is algebraic over ${\cx}$.
Hence, the differential equation  $\partial_{\omega}^2 \wp^{(1,k)}=6(\wp^{(1,k)})^2-\frac{g_2^{(1,k)}}{2}$,  along with Lemma~\ref{lem7} again implies that $g_2^{(1,k)}$ is  algebraic over ${\cx}$. 
Finally, using the differential equation
\begin{equation*}
   (\partial_{\omega}\wp^{(1,k)}(\omega,t))^2= 4 \wp^{(1,k)}(\omega,t)^3-g_2^{(1,k)}\wp^{(1,k)}(\omega,t)-g_3^{(1,k)},
\end{equation*}
we deduce that $g_3^{(1,k)}$ is algebraic over ${\cx}$ too.   
\end{proof}

The proof of Theorem~\ref{thm3} is complete.

\subsection{A proof of Theorem~\ref{thm1}}

In this part, we check the assumptions~\ref{thm2:condition_elliptic},~\ref{thm2:condition_nicepoles},~\ref{thm2:condition_principalcoefficients} and~\ref{thm2:condition_existsnicea} of Theorem~\ref{thm2}, with $\mathbf{k}=\mathbf{R}=\overline{{\cx}}$ and $f(\omega,t)=\rx(\omega,t)$, where we recall that $r_x (\omega,t)$ is the meromorphic continuation of $K(x(\omega,t),0,t)Q(x(\omega,t),0,t)$, see \eqref{eq:liftings_GF}. We do it in two separate lemmas: while the first one (Lemma~\ref{lem1:1}) is specific to the algebraic case considered in this section, the second one (Lemma~\ref{lem1:2}) will be used and applied exactly in the same way in the D-finite case analyzed in Section~\ref{sec:DF_case}.

Before these two lemmas, we prove a general result about principal parts.
\begin{lem}\label{lem:alg_coeffs}
Let $\mathbf{k}=\overline{{\cx}}$ and  $\mathbf{k}_t=\overline{\mathbb C(t)}$ and recall the definition \eqref{eq:def_X_t} of $X_{t}$ in terms of $\mathbf{k}_t$. For any function $h(\omega,t)\in\mathbf{k}$, and any $b(t)\in X_{t}$, the function $h(\omega,t)$ expands as a series around $\omega=b(t)$, and all coefficients lie in $\mathbf{k}_{t}$. In particular the principal parts of $h(\omega,t)$ lie in $\mathbf{k}_{t}$.
\end{lem}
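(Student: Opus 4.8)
The plan is to reduce everything to the local expansions of the basic functions $x(\omega,t)$ and $\wp(\omega,t)$ at $\omega=b(t)$, and then to propagate this control through the algebraic relation defining $h$. Set $s:=\omega-b(t)$ and write $L:=\mathbf{k}_t=\overline{\C(t)}$. First I would show that the Laurent expansion of $\wp(\omega,t)$ in powers of $s$ has all its coefficients in $L$. If $\wp(b(t),t)\in L$ is finite, this is simply the Taylor expansion $\wp(\omega,t)=\sum_{m\ge 0}\frac{1}{m!}\partial_\omega^m\wp(b(t),t)\,s^m$, whose coefficients lie in $L$ by Lemma~\ref{rem1}. If instead $\wp(b(t),t)=\infty$, then $b(t)$ is a lattice point and periodicity together with \eqref{eq:expansion_0_wp} gives $\wp(\omega,t)=s^{-2}+\sum_{k\ge 2}f_k(t)s^k$ with $f_k(t)\in L$ (the $f_k$ being $\Q$-polynomials in $g_2,g_3$, which lie in $\mathbf{k}_t$ by Assumption~\ref{assu1}). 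Since Proposition~\ref{prop:uniformization} expresses $x(\omega,t)$ as a Möbius transformation of $\wp(\omega,t)$ with coefficients algebraic over $\C(t)$, inverting the relevant power series shows that $x(\omega,t)$ likewise expands as an element of $L((s))$.

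Consequently the operation ``expand at $\omega=b(t)$'' defines a field embedding $\cx=\C(x(\omega,t),t)\hookrightarrow L((s))$: it is a ring homomorphism into a field, and it is injective because $x(\omega,t)$ is non-constant in $\omega$, hence has non-constant expansion, so its image is transcendental over $L$. The key structural input is then that $L=\overline{\C(t)}$ is algebraically closed of characteristic zero, so by the Newton--Puiseux theorem the field of Puiseux series $L\langle\langle s\rangle\rangle$ is an algebraic closure of $L((s))$.

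Now any $h\in\mathbf{k}=\overline{\cx}$ satisfies a nonzero polynomial relation $Q(h(\omega,t),x(\omega,t))=0$ with coefficients in $L$. Applying the embedding above, the expansion of $h$ at $\omega=b(t)$ is a root of the corresponding polynomial over $L((s))$, hence lies in $L\langle\langle s\rangle\rangle$; that is, it is a Puiseux series in $s$ all of whose coefficients belong to $L=\mathbf{k}_t$. When $h$ is single-valued (meromorphic) in $\omega$ --- as is the case for all functions to which we apply the lemma --- this Puiseux series has integer exponents, i.e.\ it is a genuine Laurent series, and its finitely many principal-part coefficients lie in $\mathbf{k}_t$, as claimed.

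The step I expect to be most delicate is the pole case $\wp(b(t),t)=\infty$, where one must check that $x(\omega,t)$ stays regular and still expands over $L$ after inverting a power series with a pole, and, more conceptually, the verification that the expansion map is genuinely a field embedding, so that Newton--Puiseux applies to the specific analytic branch $h$ rather than merely to an abstract root. Note that throughout we never need $b(t)$ itself to be algebraic over $\C(t)$: only the values $\wp(b(t),t)$ and its $\omega$-derivatives are used, and these are controlled by the defining property \eqref{eq:def_X_t} of $X_t$ together with Lemma~\ref{rem1}.
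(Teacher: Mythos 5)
Your proof is correct, at essentially the same level of rigour as the paper's, but it takes a genuinely different route. The paper argues coefficient by coefficient: when $b(t)$ is not a pole of $h$, the expansion coefficients are the values $\tfrac{1}{j!}\partial_{\omega}^{j}h(b(t),t)$; by Lemma~\ref{lem:deriv} each $\partial_{\omega}^{j}h$ is again algebraic over ${\cx}$, and since $x(b(t),t)\in\mathbf{k}_t$ (from \eqref{eq:def_X_t} together with Proposition~\ref{prop:uniformization}), every such value lies in $\overline{\mathbf{k}_t}=\mathbf{k}_t$; the case where $b(t)$ is a pole of $h$ is then reduced to the previous one by applying the argument to $1/h$. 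You instead prove one structural statement: expansion at $\omega=b(t)$ embeds ${\cx}$ into $\mathbf{k}_t((s))$ --- built from the same inputs, namely Lemma~\ref{rem1}, the expansion \eqref{eq:expansion_0_wp} and Proposition~\ref{prop:uniformization} --- and then the Newton--Puiseux theorem forces every element algebraic over ${\cx}$ to expand in the Puiseux field over $\mathbf{k}_t$. Your version buys uniformity: poles of $h$ (and even fractional-exponent branches, which the paper's derivative-based argument silently excludes) need no separate trick, and you rightly note that $b(t)$ itself need never be algebraic. The costs are two. First, the step you flag yourself: one must check that the analytic expansion of $h$, whose coefficients are a priori only germs of functions of $t$, coincides with one of the Newton--Puiseux roots; the missing one-liner is that the minimal polynomial of $h$ over ${\cx}$, pushed through your embedding, splits completely over the Puiseux field of $\mathbf{k}_t$, and since the expansion of $h$ is a root of it inside the larger (integral) Puiseux field over the germs in $t$, it must equal one of those roots. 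Second, a conflation you share with the paper: in the applications $X_t$ is defined through $\wp^{(1,k)}$, while Proposition~\ref{prop:uniformization} involves $\wp^{(1,1)}$, so strictly speaking one needs Lemma~\ref{lem7} (algebraicity of $\wp^{(1,k)}$ over ${\cx}$) to pass from $\wp^{(1,k)}(b(t),t)\in\mathbf{k}_t\cup\{\infty\}$ to control of the expansion of $x$ at $b(t)$ --- a step the paper's own proof glosses over in exactly the same way, so it is not held against you.
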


\begin{proof}
If $b(t)$ is not a pole of $h(\omega,t)$, then it suffices to prove that $\partial_{\omega}^{j}h(b(t),t)\in\mathbf{k}_t$ for all $j$. Since $b(t)\in X_t$, then $\wp(b(t),t)\in  \mathbf{k}_t\cup \{\infty\}$ and with Proposition~\ref{prop:uniformization} we deduce that $x(b(t),t)\in \mathbf{k}_t$, so $h(b(t),t)\in \mathbf{k}_t$. With  Lemma~\ref{lem:deriv}, for all $j$, $\partial_{\omega}^{j}h(\omega,t)$ is algebraic over ${\cx}$.
Hence, we  deduce that $\partial_{\omega}^{j}h(b(t),t)\in \overline{\mathbf{k}_t}=\mathbf{k}_t$ for all $j$.

If now $b(t)$ is a pole of $h(\omega,t)$, we can use the same reasoning to show that the lemma holds for $1/h(\omega,t)$, which implies that it also holds for $h(\omega,t)$. This proves our claim, which in particular proves that the coefficients of the principal parts of $h(\omega,t)$ lie in $\mathbf{k}_t$.
\end{proof}

\begin{lem}
\label{lem1:1}
Let $\mathbf{k}=\overline{{\cx}}$ and  $\mathbf{k}_t=\overline{\mathbb C(t)}$. The following holds:
\begin{enumerate}[label={\rm(\roman*)},ref={\rm(\roman*)}]
   \item For all $t\in (0,1)$, $\omega\mapsto r_{x}(\omega,t)\in \C\bigl(\wp^{(1,k)},\partial_{\omega}\wp^{(1,k)}\bigr)$. \label{lem1:condition1}
\end{enumerate}
\end{lem}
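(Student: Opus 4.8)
The plan is to show that the analytic continuation $r_x(\omega,t)$ of $F^1(x(\omega,t),t)$ lies in the field $\C\bigl(\wp^{(1,k)},\partial_\omega\wp^{(1,k)}\bigr)$, i.e.\ that it is an elliptic function with respect to the lattice $\omega_1(t)\Z+k\omega_2(t)\Z$ and can be expressed rationally in the corresponding Weierstrass function and its derivative. The key structural input is the collection of quasi-periodicity relations \eqref{eq:omega_3_per_rx}, \eqref{eq:omega_1_per_rx} and the iterated relation \eqref{eqrx_transform_finite_group}, combined with the standing hypothesis of this subsection that the orbit-sum $\mathcal{O}_x(\omega,t)$ vanishes identically.

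First I would establish the double periodicity. By \eqref{eq:omega_1_per_rx}, $r_x$ is already $\omega_1(t)$-periodic. For the second period, iterating the $\omega_3$-shift relation \eqref{eq:omega_3_per_rx} gives \eqref{eqrx_transform_finite_group}, namely $r_x(\omega+\ell\omega_3,t)=r_x(\omega,t)+\mathcal{O}_x(\omega,t)$; since we are assuming $\mathcal{O}_x(\omega,t)\equiv 0$ and since $\ell\omega_3=k\omega_2$ by \eqref{eq:ratio_periods}, this yields $r_x(\omega+k\omega_2,t)=r_x(\omega,t)$. Hence $\omega\mapsto r_x(\omega,t)$ is invariant under the lattice generated by $\omega_1(t)$ and $k\omega_2(t)$, so it is an elliptic function for the lattice $\omega_1(t)\Z+k\omega_2(t)\Z$.

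Next I would argue that any such elliptic function is a rational function of $\wp^{(1,k)}$ and $\partial_\omega\wp^{(1,k)}$ over $\C$. This is the classical statement that the field of elliptic functions for a given lattice is generated over $\C$ by $\wp$ and $\partial_\omega\wp$ (see, e.g., the references used for \eqref{eq6} in the proof of Theorem~\ref{thm2}, via the even/odd decomposition $r_x=\tfrac{r_x(\omega)+r_x(-\omega)}{2}+\tfrac{r_x(\omega)-r_x(-\omega)}{2}$, expressing the even part as a rational function of $\wp^{(1,k)}$ and the odd part as $\partial_\omega\wp^{(1,k)}$ times such a rational function). Thus $r_x(\omega,t)\in\C\bigl(\wp^{(1,k)},\partial_\omega\wp^{(1,k)}\bigr)$ for every fixed $t\in(0,1)$, which is exactly condition~\ref{lem1:condition1}.

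The main obstacle I anticipate is not the elliptic-function algebra itself but rather the justification that $r_x$ is genuinely a well-defined single-valued meromorphic elliptic function on all of $\C$ with the claimed periods: one must invoke the analytic continuation supplied by Lemma~\ref{lem:existence_properties_O} and equations~\eqref{eq:liftings_GF}--\eqref{eq:lifting_func_eq} to know that $r_x$ extends meromorphically to $\C$ in the first place, before the periodicity relations can be applied globally. Once that extension is in hand and the orbit-sum vanishing is used, the reduction to $\C\bigl(\wp^{(1,k)},\partial_\omega\wp^{(1,k)}\bigr)$ is routine, so the delicate point is simply to cite the correct continuation statements and to keep track of the fact that the period in play is $k\omega_2(t)$ rather than $\omega_2(t)$.
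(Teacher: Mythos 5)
Your proposal is correct and follows essentially the same route as the paper's proof: $\omega_{1}$-periodicity from \eqref{eq:omega_1_per_rx}, $k\omega_{2}$-periodicity from the iterated relation \eqref{eqrx_transform_finite_group} combined with the vanishing orbit-sum and $\ell\omega_{3}=k\omega_{2}$ from \eqref{eq:ratio_periods}, and then the classical fact that a doubly periodic meromorphic function lies in $\C\bigl(\wp^{(1,k)},\partial_{\omega}\wp^{(1,k)}\bigr)$. The paper's proof is just a more compressed version of this same argument (it leaves the classical elliptic-function characterisation and the meromorphic continuation from Section~\ref{secnot} implicit), so there is nothing to add.
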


\begin{lem}
\label{lem1:2}
Under the same notation of Lemma~\ref{lem1:1}, the following holds:
\begin{enumerate}[label={\rm(\roman*)},ref={\rm(\roman*)}]
\setcounter{enumi}{1}
\item The  poles $b_j(t)$ of $\omega\mapsto r_{x}(\omega,t)$ belong to $X_t$.\label{lem1:condition2}
\item The coefficients of the principal parts of $\omega\mapsto r_{x}(\omega,t)$ are in $\mathbf{k}_t$. \label{lem1:condition3}
\item There exists $a(t)\in X_t$ such that $r_{x}(a(t),t)\in \mathbf{k}_t$. \label{lem1:condition4}
\end{enumerate}
\end{lem}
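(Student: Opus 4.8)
The plan is to verify conditions~\ref{lem1:condition2}, \ref{lem1:condition3} and \ref{lem1:condition4} of Theorem~\ref{thm2} by exploiting the functional equation \eqref{eq:lifting_func_eq} relating $r_x$, $r_y$ and the coordinates $x(\omega,t),y(\omega,t)$. The starting point is that $r_x(\omega,t)$ is, up to meromorphic continuation, the lift $F^1(x(\omega,t),t)=K(x(\omega,t),0,t)Q(x(\omega,t),0,t)$. Since $F^1(x,t)$ is a power series in $x$ and $t$ that converges for $|x|<1$, the only singularities of $r_x$ come from the singularities of $x(\omega,t)$ itself together with the branch points where the analytic continuation via \eqref{eq:lifting_func_eq} introduces new poles. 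By Proposition~\ref{prop:uniformization}, $x(\omega,t)$ is a rational function of $\wp(\omega,t)$, hence its poles occur precisely where $\wp(\omega,t)=\tfrac16 D_1''(a(t),t)$ or at poles of $\wp$; in either case $\wp(b_j(t),t)\in\overline{\C(t)}=\mathbf{k}_t$ (or equals $\infty$), so $b_j(t)\in X_t$. This should establish~\ref{lem1:condition2}.

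For condition~\ref{lem1:condition3}, I would argue as follows. By \eqref{eq:lifting_func_eq}, we have $r_x(\omega,t)=K(0,0,t)Q(0,0,t)-x(\omega,t)y(\omega,t)-r_y(\omega,t)$. Away from the region where $F^1(x,t)$ is analytic (where $r_x$ has no poles), the poles of $r_x$ are controlled by those of $x(\omega,t)y(\omega,t)$ and of $r_y(\omega,t)$; iterating the quasi-periodicity relations \eqref{eq:omega_3_per_rx}--\eqref{eq:omega_3_per_ry}, the principal parts of $r_x$ at each pole are built from the principal parts of the functions $b_x(\omega,t)$ and $b_y(\omega,t)$ defined in \eqref{eq:def_bx_by}, which are themselves explicit rational expressions in $x(\omega,t)$, $y(\omega,t)$ and their translates. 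Since $x(\omega,t),y(\omega,t)\in\overline{\cx}=\mathbf{k}$ (by Proposition~\ref{prop:uniformization} together with the algebraicity of $\wp^{(1,k)}$ from Theorem~\ref{thm3}), Lemma~\ref{lem:alg_coeffs} applies: expanding any function in $\mathbf{k}$ around a point $b_j(t)\in X_t$ yields coefficients in $\mathbf{k}_t$. Thus the coefficients of the principal parts of $r_x$ lie in $\mathbf{k}_t=\overline{\C(t)}$, giving~\ref{lem1:condition3}.

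Finally, for condition~\ref{lem1:condition4}, I would choose a specific convenient value $a(t)\in X_t$ at which $r_x$ can be evaluated explicitly. A natural candidate is a point where $x(\omega,t)=0$ (so that $\omega$ corresponds to a point with $|x|<1$ lying in the domain of convergence of $F^1$): there $r_x(a(t),t)=F^1(0,t)=K(0,0,t)Q(0,0,t)$, which one must check lies in $\mathbf{k}_t=\overline{\C(t)}$. Establishing that such an $a(t)$ indeed lies in $X_t$, i.e.\ that $\wp(a(t),t)\in\mathbf{k}_t\cup\{\infty\}$, follows again from Proposition~\ref{prop:uniformization}, since the preimages under $x(\cdot,t)$ of an algebraic value of $x$ are precisely the points where $\wp$ takes an algebraic-over-$\C(t)$ value. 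Alternatively, one may evaluate at a branch point $a_i(t)$ of the kernel curve, where $x(\omega,t)=a(t)$ is algebraic over $\C(t)$ and the value $r_x$ is pinned down by \eqref{eq:lifting_func_eq}.

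The main obstacle I expect is condition~\ref{lem1:condition3}: one must be careful that the analytic continuation of $r_x$ through the relations \eqref{eq:omega_3_per_rx} and \eqref{eq:lifting_func_eq} does not produce poles whose principal-part coefficients escape $\mathbf{k}_t$. The cleanest route is to reduce everything to Lemma~\ref{lem:alg_coeffs} by showing that, locally near each pole, $r_x$ agrees with an explicit element of $\mathbf{k}$ plus a function analytic there (namely a translate of $F^1$ or $F^2$ in its domain of convergence, which contributes no pole); then the principal part is that of an element of $\mathbf{k}$, and Lemma~\ref{lem:alg_coeffs} finishes the argument. Condition~\ref{lem1:condition2} and \ref{lem1:condition4} are comparatively routine once Proposition~\ref{prop:uniformization} and Theorem~\ref{thm3} are in hand.
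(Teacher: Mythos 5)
Your outline for condition~\ref{lem1:condition3} is essentially the paper's argument (reduce to Lemma~\ref{lem:alg_coeffs} via the functional equation inside $\mathcal{O}$ and the quasi-periodicity \eqref{eq:omega_3_per_rx} outside it), but conditions~\ref{lem1:condition2} and~\ref{lem1:condition4} both have genuine gaps.

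For~\ref{lem1:condition2}, you only analyse the poles of $x(\omega,t)$ and defer everything else to ``branch points where the analytic continuation introduces new poles'' --- but that deferred part is exactly where the content lies. The continuation \eqref{eq:omega_3_per_rx} produces poles of $r_x$ at translates $b+\ell\omega_3(t)$, $\ell\in\Z$, where $b$ is a pole of $b_x(\omega,t)=y(-\omega,t)\bigl(x(\omega,t)-x(\omega+\omega_3(t),t)\bigr)$, so poles of $y(-\omega,t)$ enter (handled via the kernel relation $K(x(\omega),y(-\omega),t)=0$), and above all one must show that membership in $X_t$ survives translation by $\ell\omega_3$. This is precisely where the finite-group hypothesis is needed: $\omega_3/\omega_2=k/\ell\in\Q$ by \eqref{eq:ratio_periods}, so Lemma~\ref{lem3} makes $\wp^{(1,1)}(\ell\omega_3)$ and $\partial_\omega\wp^{(1,1)}(\ell\omega_3)$ algebraic over $\C(t)$, the addition formula \eqref{eq:addition_formula} then propagates algebraicity from $\wp^{(1,1)}(b)$ to $\wp^{(1,1)}(b+\ell\omega_3)$, and finally Lemma~\ref{lem7} transfers the conclusion to $\wp^{(1,k)}$. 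Note also that $X_t$ here is defined with respect to $\wp^{(1,k)}$ (the function verifying Assumption~\ref{assu1} via Theorem~\ref{thm3}), whereas the uniformization formula in Proposition~\ref{prop:uniformization} involves $\wp^{(1,1)}$; your sentence ``in either case $\wp(b_j(t),t)\in\mathbf{k}_t$'' silently identifies these two different Weierstrass functions, and bridging them is again Lemma~\ref{lem7}.

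For~\ref{lem1:condition4}, your choice of evaluation point is circular. At a zero $a(t)$ of $x(\cdot,t)$ you get $r_x(a(t),t)=F^1(0,t)=K(0,0,t)Q(0,0,t)$, and there is no independent way to ``check'' that this lies in $\overline{\C(t)}$: the algebraicity of $Q(0,0,t)$ is part of the conclusion of Theorem~\ref{thm1}, not an available hypothesis. The paper's choice avoids this trap: take $\omega_0$ with $y(\omega_0,t)=0$. Then $r_y(\omega_0,t)=F^2(0,t)=K(0,0,t)Q(0,0,t)$, so in \eqref{eq:lifting_func_eq} the unknown constant \emph{cancels}, leaving $r_x(\omega_0,t)=-x(\omega_0,t)y(\omega_0,t)$ (this is \cite[Lem.~3.11]{dreyfus2021differential}), which is algebraic over $\C(t)$ by the uniformization. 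Your fallback of evaluating at a branch point $a_i(t)$ suffers from the same defect: the value of $r_x$ there is not pinned down by \eqref{eq:lifting_func_eq} without already knowing $K(0,0,t)Q(0,0,t)$, since no cancellation of that constant occurs.
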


To simplify our notation, we will sometimes remove the $t$-dependency and for instance write $x(\omega)$ instead of $x(\omega,t)$. 

\begin{proof}[Proof of Lemma~\ref{lem1:1}]
Condition~\ref{lem1:condition1} is equivalent to the statement that $r_{x}(\omega)$ has $\omega_{1}$ and $k\omega_{2}$ as periods. Indeed, the fact that $\omega_{1}$ is a period follows from the definition, see \eqref{eq:omega_1_per_rx}, while the fact that $k\omega_{2}$ is a period follows from \eqref{eqrx_transform_finite_group}, since we are in the case where the orbit-sum $\mathcal{O}_{x}$ is zero.
 \end{proof}
 
\begin{proof}[Proof of Lemma~\ref{lem1:2}]
We start with proving Condition~\ref{lem1:condition2}. We will heavily rely on a result derived in \cite[Sec.~2.6]{dreyfus2021differential}, 
asserting that all poles $b_{j}$ of $\omega \mapsto r_x (\omega)$ are of the form $b_j=b+\ell \omega_3$, for some $\ell\in \Z$  and some pole $b$ of either $x(\omega)$ or $b_x(\omega)=y(-\omega)(x(\omega)-x(\omega+\omega_3))$, see \eqref{eq:def_bx_by}. This follows from the meromorphic continuation procedure \eqref{eq:omega_3_per_rx}. We may reformulate this result by saying that the poles of $r_x$ are to be found among the points $b_j=b+\ell \omega_3$, for some $\ell\in\Z$ and $b$ pole of $x(\omega)$ or $y(-\omega)$.

Given the definition \eqref{eq:def_X_t} of $X_t$, Condition~\ref{lem1:condition2} will follow from proving that for any $b$ pole of $x(\omega)$ or $y(-\omega)$ and any $\ell\in\mathbb Z$, $\wp^{(1,k)}(b+\ell \omega_3)$ is either infinite or algebraic over $\mathbb C(t)$. The strategy of proof is as follows: we shall first prove the statement for the Weierstrass function $\wp^{(1,1)}$, then for $\wp^{(1,k)}$.

If $b$ is a pole of $x(\omega)$, then using the uniformization (Proposition~\ref{prop:uniformization}), we reach the conclusion that $\wp^{(1,1)}(b)$ is algebraic over $\mathbb C(t)$. If now $b$ is a pole of $y(-\omega)$, it follows from the equation $K(x(\omega),y(-\omega))=0$ that either $b$ is a pole of $x(\omega)$ or $x(b)$ is algebraic over $\mathbb{C}(t)$. The uniformization formula implies that $\wp^{(1,1)}(b)$ is algebraic or infinite. Using now the addition formula \eqref{eq:addition_formula} for $\wp^{(1,1)}$, we deduce that $\wp^{(1,1)}(b+\ell \omega_3)$ is algebraic as well, or infinite. Indeed, recall that in the finite group case $\omega_3$ is a rational multiple of $\omega_2$, hence Lemma~\ref{lem3} yields that, if finite, $\wp^{(1,1)}(\ell \omega_3)$ and $\partial_\omega\wp^{(1,1)}(\ell \omega_3)$ are algebraic functions of $t$. Using finally Lemma~\ref{lem7}, we immediately conclude that $\wp^{(1,k)}(b+\ell \omega_3)$ is either infinite or algebraic over $\mathbb C(t)$.

We now prove Condition~\ref{lem1:condition3}. We start by analysing $x(\omega)y(\omega)$ around a pole $b(t)$ of $r_{x}(\omega,t)$, as this will relate to a principal part of $r_{x}$ due to \eqref{eq:lifting_func_eq}. By Lemma \ref{lem:alg_coeffs}, the coefficients of the principal parts of $x(\omega)y(\omega)$ (at poles $b(t)$) lie in $\mathbf{k}_t$. We now consider the poles of $r_x$ belonging to the domain $\mathcal O$ (see Lemma~\ref{lem:existence_properties_O} for a proper definition of $\mathcal{O}$). In $\mathcal O$, the function $\rx$ is defined in the following way. Since the series  $K(x,0)Q(x,0)$ converges absolutely when $\vert x\vert<1$ and $0<t<1$, and since $r_x (\omega)$ is the meromorphic continuation of $K(x(\omega),0)Q(x(\omega),0)$, the function $r_x (\omega)$ has no pole when $\vert x(\omega)\vert<1$ and $\omega\in \mathcal{O}$. In the region $\vert y(\omega)\vert<1$ and $\omega\in \mathcal{O}$, the functions $r_x (\omega)$ and $-x(\omega)y(\omega)$ have the same poles and the same principal parts at these poles, as 
$\rx (\omega)+\ry(\omega)+x(\omega)y(\omega)-K(0,0)Q(0,0)=0$ (see \eqref{eq:lifting_func_eq}).
As a partial conclusion, the coefficients of the principal parts of $\omega \mapsto r_x (\omega)$ at any pole inside $\mathcal{O}$ are algebraic.

Let us now use the fact that the union of all translated domains $\mathcal O + \ell\omega_3$, $\ell\in\mathbb Z$ covers the full complex plane, as stated in Lemma~\ref{lem:existence_properties_O}. Moreover, the following equation holds: $\rx (\omega+\omega_3)=\rx (\omega)+b_x (\omega)$, see \eqref{eq:omega_3_per_rx}. Accordingly, the poles of $r_x$ come from exactly two sources: first, all points which are poles of $b_x$ translated by $\ell\omega_3$ are potential poles of $r_x$; second, the $\ell\omega_3$-translations of the primordial poles of $r_x$ in the region $\mathcal O$ represent potential poles of $r_x$ as well. The coefficients of the principal parts being unaffected by the translations described by the dynamic \eqref{eq:omega_3_per_rx}, the proof is complete.

 Finally, we show that Condition~\ref{lem1:condition4} holds.  Similarly to  \cite[Lem.~3.11]{dreyfus2021differential}, consider a root 
 $\omega_0=\omega_0(t)$ of $y(\omega_0 )=0$.
 Let us show that we may take the value $a(t)=\omega_0 $.  Since $\wp^{(1,k)}$ is algebraic over ${\cx}$,  see Lemma~\ref{lem7}, we deduce  that either $\omega_0$ is a pole of $ \wp^{(1,k)}$, or 
  $\wp^{(1,k)}(\omega_0)$ is algebraic over $\mathbb{C}(t)$.  So it remains to consider the case where  $\wp^{(1,k)}(\omega_0)$ is algebraic over $\mathbb{C}(t)$.

In the proof of \cite[Lem.~3.11]{dreyfus2021differential}, it is shown that $\rx (\omega_0)+x(\omega_0)y(\omega_0)=0$. Now, by Proposition~\ref{prop:uniformization} and the algebraicity of $\wp^{(1,1)}(\omega_0)$ over $\mathbb{C}(t)$, we deduce that $x(\omega_0)y(\omega_0)$ is algebraic over $\mathbb{C}(t)$. Then $\rx (\omega_0)$ is algebraic over $\mathbb{C}(t)$. This concludes the proof.  
\end{proof}

We are now ready to prove the main result of the section.  

\begin{proof}[Proof of Theorem~\ref{thm1}]
Recall from the very beginning of Section~\ref{secalg} that we have already proved that the algebraicity property of $Q(x,y,t)$ implies the zero orbit-sum condition. It remains to prove the converse statement. To do so, assume that
 $\mathcal{O}_x(\omega,t)=\mathcal{O}_y(\omega,t)=0$ for all $t\in (0,1)$.  By \eqref{eq:funcequ}, it suffices to show that $K(x,0,t)Q(x,0,t)$ and $K(0,y,t)Q(0,y,t)$ are algebraic over $\C(x,t)$ and $\C(y,t)$ respectively.  
Let us consider $K(x,0,t)Q(x,0,t)$, the proof for $K(0,y,t)Q(0,y,t)$ is similar.  We have seen that  $K(x,0,t)Q(x,0,t)$  admits an analytic continuation $r_x (\omega,t)$ that is $(\omega(t), k\omega_2 (t))$-periodic.  
 By Theorem~\ref{thm3}, Lemma~\ref{lem1:1} and Lemma~\ref{lem1:2},  we may apply Theorem~\ref{thm2} with $\mathbf{k}=\mathbf{R}=\overline{{\cx}}$ and $f(\omega,t)=\rx(\omega,t)$, and deduce that   $r_x (\omega,t)$ is algebraic over $\overline{{\cx}}$. Then  $K(x,0,t)Q(x,0,t)$ is algebraic over $\C(x,t)$. 
\end{proof}

\section{D-finite case: a detour via Weierstrass zeta functions}
\label{sec:DF_case}

The main result in this section is Theorem~\ref{thm:main_section_DF}, which says the following: 
Assume that the group of the walk is finite. Then $Q(x,y,t)$ satisfies a (non-trivial)\ linear differential equation coefficients in $\C(x,y,t)$ in each of its variables.

\subsection{Strategy of the proof}

The first point is that it is sufficient to prove Theorem~\ref{thm:main_section_DF} for the specialization $Q(x,0,t)$, by similar arguments as in Section~\ref{secalg} (using standard manipulations on the functional equation~\eqref{eq:funcequ}). To that purpose, we shall prove a factorization of the generating function $Q(x,0,t)$ as follows:
\begin{equation}
\label{eq4x}
   K(x,0,t)Q(x,0,t) = F_1(x,t)F_2(x,t)+F_3(x,t),
\end{equation}
and we will successively show that $F_1$, $F_2$ and $F_3$ are $D$-finite functions in both variables $x$ and $t$ (concluding thanks to the ring structure of D-finite functions).

To prove \eqref{eq4x}, our starting point is the lifting $\rx(\omega,t)$ of $Q(x,0,t)$ as introduced in \eqref{eq:liftings_GF}, which we will prove to satisfy the following decomposition:
\begin{equation}
\label{eq4}
   \rx (\omega,t)= \mathcal{O}_x (\omega,t)\phi (\omega,t)+\psi (\omega,t),
\end{equation}
where
\begin{itemize}
   \item $\mathcal{O}_x (\omega,t)$ is the orbit-sum \eqref{eq:orbit_sums};
   \item $\phi (\omega,t)=\frac{\omega_{1}(t)}{2\mathbf{i}\pi}\zeta (\omega,t)-\frac{\omega}{\mathbf{i}\pi}\zeta (\tfrac{\omega_{1}(t)}{2},t)$, with $\zeta$ the negative of an anti-derivative of the Weierstrass function with periods $(\omega_{1}(t),k\omega_{2}(t))$, i.e., $\partial_\omega \zeta(\omega,t) = - \wp^{(1,k)}(\omega,t)$;
    \item $\psi (\omega,t)$ is a certain $(\omega_{1}(t),k\omega_{2}(t))$-periodic function.
\end{itemize}
This decomposition is obtained in \cite[Sec.~4]{dreyfus2019differential} (see also \cite[Sec.~9]{new} for unweighted quadrant walks), based on ideas of \cite{FIM17}.

In the zero orbit-sum case (Section~\ref{secalg}), the function $r_x(\omega,t)=\psi (\omega,t)$ is $(\omega_1(t),k\omega_2(t))$-elliptic by \eqref{eq4}; see also \eqref{eqrx_transform_finite_group} and recall that we have $\omega_3 /\omega_2=k/\ell$, see \eqref{eq:ratio_periods}. This strong periodicity property is no longer satisfied  in the non-zero orbit-sum case. 

Equation~\eqref{eq4} immediately entails the identity~\eqref{eq4x},
where for instance (with obvious notation) $F_1(x(\omega,t),t)=\mathcal{O}_x (\omega,t)$, and thus $F_1$ is a pullback of the orbit-sum.

It is crucial to remark that the role and the complexity of the three functions appearing in \eqref{eq4x} is very different. Indeed, the function $F_1$ is a simple, explicit function, and we will easily prove in Lemma~\ref{lem16} that it is algebraic and therefore D-finite. The function $F_2$ is also explicit but contained some more difficulties, as it implies the (intrinsically transcendental)\ zeta Weierstrass function; it will be proved to be D-finite in Lemma~\ref{lem15} (as a function of $x(\omega,t)$ and $t$). On the other hand, we only have an implicit control on the function $F_3$, through the poles of the elliptic function $\psi$ in~\eqref{eq4}. Using precise informations on the poles of various functions as well as Theorem~\ref{thm2}, we shall obtain that $F_3$ is D-finite up to some additive constant, see Lemma~\ref{lem17}.

\subsection{Preliminary results on D-finiteness}

We start with two classical results, which will be repeatedly used in Section~\ref{sec:DF_case}.   

\begin{lem}
\label{lem13}
Let $(k,\partial)$ be a differential field and let $(R,\partial)$ be a differential ring extension of $(k,\partial)$. Then $f\in R$ satisfies a linear $\partial$-equation with coefficients in $k$ if and only if the dimension of the $k$-vector space $\mathrm{Vect}_{k} (\partial^{n}f)_{n\in \N}$ is finite. 
\end{lem}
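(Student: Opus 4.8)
The plan is to prove the two implications separately, both by elementary linear algebra over the field $k$, with the Leibniz rule as the only input beyond dimension counting. Note first that ``satisfies a linear $\partial$-equation with coefficients in $k$'' is understood, per the paper's standing convention, to mean a nonzero relation.

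For the forward implication, I would assume $f$ satisfies a nonzero relation $\sum_{i=0}^{m} a_i \partial^i f = 0$ with $a_i \in k$, choosing $m$ so that $a_m \neq 0$. Since $k$ is a field I may divide by $a_m$ and rewrite $\partial^m f$ as a $k$-linear combination of $f, \partial f, \dots, \partial^{m-1} f$. The claim is then that $\mathrm{Vect}_k(\partial^n f)_{n \in \N}$ is already spanned by these $m$ elements, which I would establish by induction on $n \geq m$. Writing $\partial^n f = \sum_{j=0}^{m-1} c_j \partial^j f$ with $c_j \in k$ and differentiating, the Leibniz rule produces $\sum_{j} (\partial c_j) \partial^j f + \sum_{j} c_j \partial^{j+1} f$; every term of the first sum already lies in the span, and in the second sum only the term $c_{m-1}\partial^m f$ escapes the range $0 \le j \le m-1$, and it is reabsorbed using the relation expressing $\partial^m f$. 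Hence $\dim_k \mathrm{Vect}_k(\partial^n f)_{n \in \N} \leq m$, in particular finite.

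For the converse, I would simply observe that if $\dim_k \mathrm{Vect}_k(\partial^n f)_{n \in \N} = d < \infty$, then the $d+1$ elements $f, \partial f, \dots, \partial^d f$ are linearly dependent over $k$, which yields coefficients $a_0, \dots, a_d \in k$, not all zero, with $\sum_{i=0}^d a_i \partial^i f = 0$. This is exactly a nonzero linear $\partial$-equation for $f$ with coefficients in $k$.

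I do not anticipate any genuine obstacle: the statement is classical and the argument is self-contained. The only points requiring care are that $k$ must be a field (used both to divide by the leading coefficient and to speak of a $k$-vector space and its dimension), and that the differential structure enters solely through the product rule in the inductive step of the forward direction.
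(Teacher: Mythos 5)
Your proof is correct. The paper itself never writes out a proof of this lemma --- in the proof of Lemma~\ref{lem14} the authors merely remark that ``Lemma~\ref{lem13} admits an elementary proof'' --- and your argument (downward: divide by the leading coefficient, then show by induction via the Leibniz rule that $\mathrm{Vect}_k(f,\partial f,\dots,\partial^{m-1}f)$ is stable under $\partial$; upward: $d+1$ derivatives in a $d$-dimensional space must be $k$-linearly dependent) is exactly the standard elementary argument the authors are alluding to, with the two hypotheses on $k$ (field, closed under $\partial$) used in precisely the right places.
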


\begin{lem}
\label{lem14}
Let $(k,\partial)$ be a differential field and let $(\overline{k},\partial)$ be its algebraic closure. Let $(R,\partial)$ be a differential ring extension of $(k,\partial)$.  If $f\in R$  satisfies a linear $\partial$-equation with coefficients in $\overline{k}$, then it satisfies a linear $\partial$-equation with coefficients in $k$.
\end{lem}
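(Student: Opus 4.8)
The plan is to descend the coefficients from the algebraic closure $\overline{k}$ to a \emph{finite} extension of $k$, and then to conclude by a dimension count built on the criterion of Lemma~\ref{lem13}. First I would normalise the hypothesis: since $\overline{k}$ is a field, after dividing by the (non-zero)\ leading coefficient I may assume $f$ satisfies a monic relation $\partial^d f=-\sum_{i=0}^{d-1}a_i\partial^i f$ involving only finitely many coefficients $a_0,\dots,a_{d-1}\in\overline{k}$, each algebraic over $k$. I then set $L:=k(a_0,\dots,a_{d-1})$, which is a finite field extension of $k$, say of degree $m:=[L:k]<\infty$; in our setting $L\subseteq\overline{k}\subseteq R$, so that $(R,\partial)$ is in particular a differential ring extension of $L$.

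The one genuinely structural point I expect to require care is that $(L,\partial)$ is itself a \emph{differential} field, i.e.\ that $\partial(L)\subseteq L$: a priori the derivation is only known to carry $\overline{k}$ into $\overline{k}$, and one must verify that it preserves the finite subextension $L$. This is the main obstacle, and it follows from the fact that the derivative of an element algebraic over $k$ lies in the field it generates. Concretely, if $a$ is algebraic over $k$ with minimal polynomial $P\in k[X]$, differentiating $P(a)=0$ yields $P'(a)\,\partial a+\sum_i(\partial c_i)a^i=0$ where $c_i\in k$ are the coefficients of $P$; since we are in characteristic zero, $P$ is separable and $P'(a)\neq 0$, so $\partial a\in k(a)\subseteq L$. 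Combined with the Leibniz and quotient rules applied to rational expressions in $a_0,\dots,a_{d-1}$, this gives $\partial(L)\subseteq L$, so that $(L,\partial)$ is a differential field extension of $(k,\partial)$ that is finite-dimensional as a $k$-vector space.

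Finally I would conclude by a transfer of dimensions. Since $f$ satisfies a linear $\partial$-equation with coefficients in $L$, Lemma~\ref{lem13} applied over $L$ shows that $V:=\mathrm{Vect}_{L}(\partial^n f)_{n\in\N}$ is finite-dimensional over $L$. Viewing $V$ as a $k$-vector space, its $k$-dimension equals $(\dim_L V)\cdot m<\infty$. The $k$-span $W:=\mathrm{Vect}_{k}(\partial^n f)_{n\in\N}$ is a $k$-subspace of $V$, hence $\dim_k W\leq\dim_k V<\infty$. Applying Lemma~\ref{lem13} once more, this time over $k$, we obtain that $f$ satisfies a (non-trivial)\ linear $\partial$-equation with coefficients in $k$, as required.
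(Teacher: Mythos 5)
Your proof is correct and follows essentially the same route as the paper: pass to the finite extension $L$ of $k$ generated by the coefficients, apply Lemma~\ref{lem13} over $L$, transfer finite-dimensionality through the finite degree $[L:k]$, and apply Lemma~\ref{lem13} again over $k$. The only difference is that you carefully verify the (classical) fact that $\partial(L)\subseteq L$, a point the paper's proof leaves implicit.
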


\begin{proof}
Lemma~\ref{lem13} admits an elementary proof. Lemma~\ref{lem14} is classical, but let us give a short proof. As for Lemma~\ref{lem14}, assume that $f\in R$  satisfies a linear $\partial$-equation with coefficients in $\overline{k}$. Let $L$ be a finite field extension of  $k$ that contains the coefficients of the differential equation. By Lemma~\ref{lem13}, the dimension of the vector space $\mathrm{Vect}_{L} (\partial^{n}f)_{n\in \N}$ is finite.  Since $L\vert k$ is finite too, the dimension  $\mathrm{Vect}_{k} (\partial^{n}f)_{n\in \N}$ is finite and by Lemma~\ref{lem13}, $f$ satisfies a linear $\partial$-equation in coefficients in $k$.  
\end{proof}

\subsection{D-finiteness and pullbacks}

Recall that the generating function $K(x,0,t)Q(x,0,t)$ admits a meromorphic lifting $r_x (\omega,t)$ on the $\omega$-complex plane, see~\eqref{eq:liftings_GF}. Let us first see how the D-finiteness of an $(x,t)$-function is related to the D-finiteness of the associated lifted $(\omega,t)$-function.

\begin{prop}
\label{prop:DF_diff_eval}
Let $f(\omega,t)$ be a function such that for all $t\in (0,1)$, $\omega\mapsto f(\omega,t)$ is meromorphic on $\C$. In addition, let $a(t)\in \overline{\mathbb{C}(t)}$ and $\omega(t)$ be such that $x(\omega (t),t)=a(t)$. Assume that $\partial_{\omega} f\in \overline{{\cx}}$ and that $t \mapsto f(\omega(t),t)$ is  D-finite over $\mathbb{C}(t)$. Then any differentiable function $F(x ,t)$ satisfying $f(\omega,t)=F(x(\omega,t),t)$ on some neighbourhood is D-finite in its two variables over $\mathbb{C}(x,t)$.
\end{prop}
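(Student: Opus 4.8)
The plan is to prove D-finiteness in the two variables separately, extracting $x$-D-finiteness from the hypothesis $\partial_\omega f\in\overline{{\cx}}$ and $t$-D-finiteness from the hypothesis that $t\mapsto f(\omega(t),t)$ is D-finite, and then gluing them together by means of the integral representation of $F$ along the $x$-axis with base point $a(t)$.

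First I would establish that $\partial_x F$ is algebraic over $\C(x,t)$. Differentiating the relation $f(\omega,t)=F(x(\omega,t),t)$ with respect to $\omega$ gives, by the chain rule, $\partial_\omega f(\omega,t)=\partial_x F(x(\omega,t),t)\,\partial_\omega x(\omega,t)$. Since $x(\omega,t)\in{\cx}$ is trivially algebraic over ${\cx}$, Lemma~\ref{lem:deriv} yields $\partial_\omega x(\omega,t)\in\overline{{\cx}}$; as $\partial_\omega x$ is not identically zero and $\partial_\omega f\in\overline{{\cx}}$ by hypothesis, we obtain $\partial_x F(x(\omega,t),t)=\partial_\omega f/\partial_\omega x\in\overline{{\cx}}$, that is, $\partial_x F(x(\omega,t),t)$ is algebraic over $\C(x(\omega,t),t)$. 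Because $\omega\mapsto x(\omega,t)$ is non-constant, a nonzero annihilating polynomial evaluated along the curve $X=x(\omega,t)$ vanishes for infinitely many values of $X$ at fixed $t$, hence identically in $X$; thus $\partial_x F(x,t)$ is algebraic, hence D-finite, in $x$ over $\C(x,t)$. If $A\in\C(x,t)\langle\partial_x\rangle$ annihilates $\partial_x F$, then $A\partial_x$ annihilates $F$, so $F$ is D-finite in $x$.

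For D-finiteness in $t$, I would use the representation $F(x,t)=F(a(t),t)+\int_{a(t)}^{x}\partial_\xi F(\xi,t)\,d\xi$, which makes sense since $x(\omega(t),t)=a(t)$. The boundary value equals $F(a(t),t)=f(\omega(t),t)$, which is D-finite in $t$ by hypothesis, and a fortiori D-finite in $t$ over $\C(x,t)$. The integrand $\partial_\xi F(\xi,t)=(\partial_x F)(\xi,t)$ is algebraic over $\C(\xi,t)$ by the previous step, hence D-finite. The remaining point is to show that the parametric integral $\Phi(x,t):=\int_{a(t)}^{x}\partial_\xi F(\xi,t)\,d\xi$ — a definite integral of an algebraic function with algebraic endpoints $a(t)$ and $x$ — is D-finite in $t$. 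Granting this, $F=\Phi+F(a(t),t)$ is a sum of two functions that are D-finite in $t$, and Lemmas~\ref{lem13} and~\ref{lem14} allow one to descend the resulting equations to coefficients in $\C(x,t)$; combined with the $x$-result this concludes the proof.

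The main obstacle is precisely the $t$-D-finiteness of the parametric integral $\Phi$, and I expect the bare finite-dimensional criterion of Lemma~\ref{lem13} to stall here. Writing $L=\C(x,t)(\partial_x F)$ for the finite extension containing all the derivatives $\partial_t^n\partial_x F$, one has $\partial_x(\partial_t^n\Phi)=\partial_t^n(\partial_x F)\in L$; but if $\partial_t^r(\partial_x F)=\sum_{i<r}c_i\,\partial_t^i(\partial_x F)$ is the minimal $t$-relation, the candidate quantity $\Theta=\partial_t^r\Phi-\sum_{i<r}c_i\,\partial_t^i\Phi$ satisfies $\partial_x\Theta=-\sum_{i<r}(\partial_x c_i)\,\partial_t^i\Phi$, which is nonzero because the coefficients $c_i$ genuinely depend on $x$. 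Thus the relation for the integrand does not close up to a relation for the integral, reflecting the familiar fact that closure of D-finiteness under integration is not formal. The correct input is the closure of D-finite functions under definite integration together with the stability of D-finiteness under evaluation at an algebraic function $a(t)$ — equivalently, that integrals of algebraic differentials over an algebraically varying family satisfy a Picard--Fuchs-type linear differential equation in the parameter. I would invoke this standard closure property to complete step three rather than attempt a hands-on finite-dimensional argument.
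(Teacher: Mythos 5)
Your first step (algebraicity of $\partial_x F$ via the chain rule and Lemma~\ref{lem:deriv}) is exactly the paper's, and your diagnosis that the naive finite-dimensional argument for the parametric integral does not close up is accurate. But there is a genuine gap at the point where you write $F(x,t)=F(a(t),t)+\int_{a(t)}^{x}\partial_\xi F(\xi,t)\,d\xi$ and identify the boundary value with $f(\omega(t),t)$. The hypothesis only gives $f(\omega,t)=F(x(\omega,t),t)$ on \emph{some} neighbourhood, which need not contain $\omega(t)$ (equivalently, the domain of $F$ need not contain $x=a(t)$); the remark ``which makes sense since $x(\omega(t),t)=a(t)$'' does not address this. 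Moreover, even after analytic continuation of $F$ to a region containing $a(t)$, the map $x(\cdot,t)$ has several preimages of $a(t)$, and different branches of the continued $F$ take the values $f(\omega',t)$ at the various preimages $\omega'$; only the branch obtained by continuing along the $x$-image of an $\omega$-path ending at $\omega(t)$ has the value $f(\omega(t),t)$. This is precisely the point where the paper's proof spends its effort: it writes $F=\widetilde F+A(t)$, introduces a second function $F_1$ defined near $x=a(t)$ with $F_1(x(\omega,t),t)=f(\omega,t)$ near $\omega=\omega(t)$, argues that $F_1-A$ and $F-A$ are sheets of the same multivalued function and hence satisfy the \emph{same} linear differential equations, and only then evaluates at $a(t)$ to conclude that $A(t)$ is D-finite. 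Your proof needs this monodromy step (or an equivalent), and as written it is missing.

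The other soft spot is the black-box invocation of closure of D-finiteness under definite integration. The Picard--Fuchs/creative-telescoping statement you appeal to is true, but in your setting the endpoint $a(t)$ (and, once the branch issue above is repaired, the integration path itself) varies with $t$, so the required statement is not quite the off-the-shelf ``indefinite integrals of D-finite functions are D-finite''; you would still have to split off the endpoint contribution and use closure under algebraic substitution, i.e.\ prove a small lemma rather than cite one. The paper gets integration closure much more cheaply and self-containedly: the $x$-antiderivative of the algebraic function $\partial_x F$ is realized as the Hadamard product $\bigl((x-x_{c})\partial_x F(x,t)\bigr)\star\bigl(-\log(1-(x-x_{c}))/(1-(t-t_{c}))\bigr)$, which is D-finite in both variables by Lipshitz's theorem \cite[Thm~2.7]{lipshitz1989d}, and the constant of integration $A(t)$ is then handled by the sheet argument described above. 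So your outline runs parallel to the paper's (algebraicity of $\partial_x F$, an antiderivative, pinning down the constant of integration), but the two nonformal steps are, respectively, left as an imprecise citation and genuinely gapped.
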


\begin{proof}
Differentiating the identity relating $f$ and $F$, one finds 
\begin{equation}
\label{eq:diff_lift}
   \partial_{\omega} f (\omega,t)= \partial_{\omega} x(\omega,t) \partial_{x}F(x (\omega,t),t).
\end{equation}
By Lemma~\ref{lem:deriv} (resp.\ by assumption), the function $\partial_{\omega} x(\omega,t)$ (resp.\ $\partial_{\omega} f (\omega,t)$) belongs to $\overline{{\cx}}$.  Equation~\eqref{eq:diff_lift} allows us to obtain that $\partial_x F(x,t)$ is algebraic over $\mathbb{C}(x,t)$. Since any algebraic function is D-finite, see \cite[Prop.~2.3]{lipshitz1989d}, we reach the conclusion that $F(x,t)$ satisfies a linear differential equation in its first variable. 

We now look at the $t$-variable. Fix a non-critical point $(x_{c},t_{c})$ within the neighbourhood where $F(x,t)$ is defined and consider the expansion
\[\partial_x F(x,t)=\sum_{j=0}^{\infty}\sum_{k=0}^{\infty}(x-x_{c})^{k}(t-t_{c})^{j}a_{i,j}\]
around this point. Following the discussion preceding \cite[Thm~2.7]{lipshitz1989d}, we consider the function
\begin{equation*}
   \widetilde{F}(x,t):=\bigl((x-x_{c})\partial_x F(x,t)\bigr)\star\left(-\frac{\log(1-(x-x_{c}))}{1-(t-t_{c})}\right)=\sum_{j=0}^{\infty}\sum_{k=1}^{\infty}\frac{(x-x_{c})^{k+1}}{k+1}(t-t_{c})^{j}a_{i,j},
\end{equation*}
where $\star$ denotes the Hadamard product with respect to expansions around $(x_{c},t_{c})$. Since the Hadamard product of D-finite functions is D-finite, see  \cite[Thm~2.7]{lipshitz1989d}, the function $\widetilde{F}(x,t)$ is D-finite (in both variables). Moreover, due to the expansion, $\partial_x \widetilde{F}(x,t)=\partial_x F(x,t)$, that is $F(x,t)=\widetilde{F}(x,t)+A(t)$ for some function $A(t)$ not depending on $x$. So it suffices to show that $A(t)$ is D-finite.

We now consider a function $F_{1}(x,t)$ defined in a connected region containing $x=a(t)$ and satisfying $F_{1}(x(\omega,t),t)=f(\omega,t)$ for $\omega$ in a connected region containing $\omega=\omega(t)$. By the meromorphicity of $f$, the function $F_{1}(x,t)-A(t)$ and $\widetilde{F}(x,t)=F(x,t)-A(t)$ are sheets of the same multi-valued function. This implies that $F_{1}(x,t)-A(t)$ is D-finite in both variables as it satisfies the same linear differential equations as $\widetilde{F}(x,t)$. Since D-finite functions of algebraic functions are necessarily D-finite, see \cite[Prop~2.3]{lipshitz1989d}, this implies that $F_{1}(a(t),t)-A(t)$ is D-finite in $t$. Moreover, $F_{1}(a(t),t)=f(\omega(t),t)$, which is D-finite in $t$ by assumption, so $A(t)$ is also D-finite. Hence $F(x,t)=\widetilde{F}(x,t)+A(t)$ is D-finite as claimed.
\end{proof}

\subsection{D-finiteness of the functions $F_1$, $F_2$ and $F_3$}

In this part, we show crucial preliminary results to the proof of Theorem~\ref{thm:main_section_DF}.

\begin{lem}\label{lem16}
The function $\mathcal{O}_x$ defined in \eqref{eq:orbit_sum_Ox} belongs to $ \overline{{\cx}}$. 
\end{lem}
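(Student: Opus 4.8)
The plan is to recognise that the orbit-sum is nothing but an element of the function field of the kernel curve $\Etproj$, and that this function field is a finite algebraic extension of $\cx$. Concretely, since we are in the finite-group setting, the sum $\mathcal{O}(x,y)=\sum_{g\in G}\text{sign}(g)\,g(xy)$ appearing in \eqref{eq:formal_orbit_sum} is a \emph{finite} sum. Each generating involution acts birationally: applying Vieta's formulas to the degree-two polynomials $y\mapsto\overline{K}(x,y,t)$ and $x\mapsto\overline{K}(x,y,t)$ shows that $\iota_1$ and $\iota_2$ are given by rational functions of $(x,y)$ with coefficients in $\K(t)$. Hence every $g\in G$ acts by a rational transformation, and $\mathcal{O}(x,y)$ is a rational function $R(x,y,t)\in\C(t)(x,y)$.

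First I would evaluate $R$ along the uniformisation of Proposition~\ref{prop:uniformization}: by \eqref{eq:orbit_sum_Ox} we have $\mathcal{O}_x(\omega,t)=R\bigl(x(\omega,t),y(\omega,t),t\bigr)$. Here $x(\omega,t)$ and $t$ lie in $\cx$ by the very definition of $\cx$, whereas $y(\omega,t)$ is algebraic over $\cx$: indeed $(x(\omega,t),y(\omega,t))\in\Etproj$, so $y(\omega,t)$ is a root of the degree-two polynomial $y\mapsto\overline{K}(x(\omega,t),y,t)$ whose coefficients lie in $\cx$. Thus $\cx(y(\omega,t))$ is an extension of $\cx$ of degree at most two, contained in $\overline{\cx}$. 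Any rational expression in $x(\omega,t)$, $y(\omega,t)$ and $t$ with coefficients in $\C(t)\subset\cx$ therefore lies in $\overline{\cx}$, which gives $\mathcal{O}_x\in\overline{\cx}$ and concludes the argument.

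The only point deserving care --- and the main (if minor) obstacle --- is to make precise the passage from the formal definition \eqref{eq:formal_orbit_sum}, in which $g$ acts on the rational function $xy$ on the curve, to a genuine rational function of $(x,y)$ over $\C(t)$; this is pure bookkeeping on the birational action of $G$, the finiteness of $G$ guaranteeing that the sum stays rational. Once this is in place, the degree-two algebraicity of $y(\omega,t)$ over $\cx$ closes the proof at once, and, in contrast with the more delicate Lemmas~\ref{lem15} and~\ref{lem17} to come, no use of the transformed Weierstrass functions from Section~\ref{subsec:prel} is required here.
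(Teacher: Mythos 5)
Your proof is correct and takes essentially the same approach as the paper: the paper's two-line proof likewise observes that $\mathcal{O}_x \in \C\bigl(x(\omega,t),y(\omega,t),t\bigr)$ and then concludes from the algebraicity of $y(\omega,t)$ over ${\cx}$ furnished by the kernel equation $K\bigl(x(\omega,t),y(\omega,t),t\bigr)=0$. The extra detail you supply (Vieta's formulas for the birational action of $G$, the degree-two bound on the extension) is exactly the bookkeeping the paper leaves implicit.
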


\begin{proof}
Clearly with \eqref{eq:orbit_sum_Ox} we have $\mathcal{O}_x \in \C(x(\omega,t),y(\omega,t),t)$. We conclude with the fact that $y(\omega,t)\in \overline{{\cx}}$, due to the polynomial equation $K(x(\omega,t),y(\omega,t),t)=0$.
\end{proof}

Let us now consider the pullback of the function $\phi$ appearing in \eqref{eq4}. Write $\phi(\omega,t)=F_2(x(\omega,t),t)$, with
\begin{equation}
\label{eq:def_F2_G_H}
   F_2(x,t)=\frac{\omega_{1}(t)}{2\mathbf{i}\pi}G(x,t)-\frac{H(x,t)}{\mathbf{i}\pi}\zeta \bigl(\tfrac{\omega_{1}(t)}{2},t\bigr).
\end{equation}
We have the following result:
\begin{lem}
\label{lem15}
The function $F_2(x,t)$ satisfies a linear differential equation with coefficients in $\mathbb{C}(x,t)$ in each of its variables.  
\end{lem}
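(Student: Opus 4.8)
The plan is to prove D-finiteness of $F_2(x,t)$ in each variable separately, exploiting the decomposition \eqref{eq:def_F2_G_H} where $F_2 = \frac{\omega_1}{2\mathbf{i}\pi}G - \frac{H}{\mathbf{i}\pi}\zeta(\tfrac{\omega_1}{2},t)$, with $G(x(\omega,t),t)=\zeta(\omega,t)$ and $H(x(\omega,t),t)=\omega$. Since D-finite functions form a ring stable under the operations we use, and since $\frac{\omega_1}{2\mathbf{i}\pi}$ and $\zeta(\tfrac{\omega_1}{2},t)$ depend only on $t$, the bulk of the work reduces to controlling $G$ and $H$. I would invoke Proposition~\ref{prop:DF_diff_eval}: to show an $(x,t)$-function $F$ is D-finite it suffices to exhibit a lift $f(\omega,t)=F(x(\omega,t),t)$ with $\partial_\omega f\in\overline{\cx}$ together with D-finiteness in $t$ of the evaluation $t\mapsto f(\omega(t),t)$ at some suitable $\omega(t)$ with $x(\omega(t),t)$ algebraic.

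For the function $H$, the lift is simply $H(x(\omega,t),t)=\omega$, so $\partial_\omega H = 1\in\overline{\cx}$ trivially, and evaluating along a branch where $x(\omega(t),t)=a(t)$ with $a(t)$ algebraic gives $\omega(t)$, which one should check is D-finite (indeed $\omega(t)$ is, up to the known period data, an elliptic integral of an algebraic function, hence D-finite in $t$; alternatively one uses that $\omega_1,\omega_2,\omega_3$ and $\zeta(\tfrac{\omega_1}{2},t)$ are D-finite in $t$, which follows from their integral representations in Proposition~\ref{prop:uniformization} and the Legendre relation). For the function $G$, the lift is $\zeta(\omega,t)$, the Weierstrass zeta function for the periods $(\omega_1,k\omega_2)$; here $\partial_\omega\zeta=-\wp^{(1,k)}$, which lies in $\overline{\cx}$ by Theorem~\ref{thm3} (equivalently Lemma~\ref{lem7}), so the derivative condition of Proposition~\ref{prop:DF_diff_eval} is satisfied. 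For the $t$-evaluation one again picks $\omega(t)$ with $x(\omega(t),t)$ algebraic and must argue that $t\mapsto\zeta(\omega(t),t)$ is D-finite in $t$.

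The main obstacle, and the step I would treat most carefully, is establishing the D-finiteness in $t$ of the evaluations $\zeta(\omega(t),t)$ and $\omega(t)$ themselves, since $\zeta$ is genuinely transcendental and its $t$-dependence runs through both the argument and the (algebraic) invariants $g_2^{(1,k)},g_3^{(1,k)}$. The route I favour is to differentiate $\zeta$ with respect to $t$ and to $\omega$, use the classical heat-type and quasi-periodicity relations for $\zeta$ (its $\omega$-derivative is $-\wp$, and its variation in the periods is governed by $g_2,g_3$ and the quasi-periods $\eta_i$), and show these combine into a finite-dimensional $\mathbb{C}(t)$-module generated by derivatives, so that Lemma~\ref{lem13} applies. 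In other words, one shows $\mathrm{Vect}_{\overline{\C(t)}}(\partial_t^n \zeta(\omega(t),t))_{n}$ is finite-dimensional by reducing every higher derivative, via the algebraicity of $g_2^{(1,k)},g_3^{(1,k)}$ over $\C(t)$ (Lemma~\ref{lem8}) and of $\wp^{(1,k)}(\omega(t),t)$, to a fixed finite set of functions; Lemma~\ref{lem14} then descends the equation from $\overline{\C(t)}$ to $\C(t)$.

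Once $G$ and $H$ are shown to be D-finite in both $x$ and $t$ via Proposition~\ref{prop:DF_diff_eval}, I would assemble $F_2$ using the closure of D-finite functions under multiplication by D-finite functions of $t$ alone (namely $\omega_1(t)$ and $\zeta(\tfrac{\omega_1}{2},t)$, whose D-finiteness in $t$ is part of the period analysis above) and under addition. This yields a linear differential equation for $F_2$ in $x$ and one in $t$, with coefficients in $\mathbb{C}(x,t)$, completing the proof of Lemma~\ref{lem15}.
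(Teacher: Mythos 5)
Your proposal is correct and follows essentially the same route as the paper: the same splitting of $F_2$ into $G$, $H$ and the $t$-only prefactors $\omega_1(t)$ and $\zeta(\tfrac{\omega_1(t)}{2},t)$, the same application of Proposition~\ref{prop:DF_diff_eval} with $\partial_\omega G=-\wp^{(1,k)}\in\overline{\cx}$ and $\partial_\omega H=1$, and the same key mechanism in the $t$-direction, namely expressing $\partial_t\zeta$ through the variation of $\zeta$ in the invariants $g_2^{(1,k)},g_3^{(1,k)}$ and closing a finite-dimensional module over $\overline{\C(t)}$ so that Lemmas~\ref{lem13} and~\ref{lem14} apply (this is precisely the paper's Lemma~\ref{lem:zeta_DF}). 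The only cosmetic difference is the choice of evaluation points: the paper takes $\omega=\omega_1(t)/2$ for $G$ and $\omega=0$ for $H$, so that $H$ evaluates to $0$ and no separate argument for the D-finiteness of a generic $\omega(t)$ is needed, whereas you obtain it from the elliptic-integral (Picard--Fuchs) representation, which also works.
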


Before embarking into the proof of Lemma~\ref{lem15}, we show the following properties of the Weierstrass zeta function:
\begin{lem}
\label{lem:zeta_DF}
Let $\zeta$ and $\wp$ denote, respectively, the Weierstrass $\zeta$-function and $\wp$-function with periods $(\omega_{1}(t),k\omega_{2}(t))$. We have $\partial_{\omega} \zeta\in \overline{{\cx}}$ and for $a(t)\in X_{t}$, see \eqref{eq:def_X_t}, the functions $a(t)$ and $\zeta(a(t),t)$ are both D-finite with respect to $t$.
\end{lem}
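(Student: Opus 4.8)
The plan is to treat the three assertions separately: the first is immediate from the lemmas already proved, while the second and third reduce to a single coupled differential system in $t$. For $\partial_{\omega}\zeta\in\overline{\cx}$ there is nothing to do beyond recalling that, by construction, $\partial_{\omega}\zeta(\omega,t)=-\wp^{(1,k)}(\omega,t)$; since Lemma~\ref{lem7} asserts that $\wp^{(1,k)}$ is algebraic over $\cx$, we get $\partial_{\omega}\zeta\in\overline{\cx}$ at once. So the real content is the D-finiteness in $t$ of $a(t)$ and of $\zeta(a(t),t)$.

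First I would record the data attached to a point $a(t)\in X_t$. By the definition \eqref{eq:def_X_t} of $X_t$, the quantity $c(t):=\wp^{(1,k)}(a(t),t)$ lies in $\overline{\C(t)}\cup\{\infty\}$. In the generic situation $c(t)$ is finite; then the differential equation \eqref{eq:classical_diffeq_P}, together with the algebraicity of $g_2^{(1,k)},g_3^{(1,k)}$ over $\C(t)$ (Lemma~\ref{lem8}), shows that $\partial_{\omega}\wp^{(1,k)}(a(t),t)=\sqrt{4c(t)^3-g_2^{(1,k)}c(t)-g_3^{(1,k)}}$ is algebraic over $\C(t)$ as well. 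I would set aside the two degenerate loci $c(t)=\infty$ and $\partial_{\omega}\wp^{(1,k)}(a(t),t)=0$ for the end of the argument.

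The main step is to produce a closed first-order linear system in $t$ for the pair $(a(t),\zeta(a(t),t))$. Differentiating $\wp^{(1,k)}(a(t),t)=c(t)$ in $t$ gives $\partial_{\omega}\wp^{(1,k)}(a,t)\,a'(t)+\partial_t\wp^{(1,k)}(a,t)=c'(t)$, and similarly $\tfrac{d}{dt}\zeta(a,t)=-c(t)\,a'(t)+\partial_t\zeta(a,t)$ using $\partial_{\omega}\zeta=-\wp^{(1,k)}$. The classical input I would invoke is that, the functions being determined by their invariants, the derivatives $\partial_{g_2}\wp,\partial_{g_3}\wp$ and $\partial_{g_2}\zeta,\partial_{g_3}\zeta$ are polynomials in $1,\omega,\wp,\partial_{\omega}\wp,\zeta$ that are \emph{affine} in $\omega$ and in $\zeta$, with coefficients rational in $g_2^{(1,k)},g_3^{(1,k)},1/\Delta$ (one relation is the homogeneity identity $4g_2\partial_{g_2}\wp+6g_3\partial_{g_3}\wp=\omega\,\partial_{\omega}\wp+2\wp$; the second comes from differentiating \eqref{eq:classical_diffeq_P} in the invariants and integrating once). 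Writing $\partial_t\wp=\partial_{g_2}\wp\,\partial_t g_2^{(1,k)}+\partial_{g_3}\wp\,\partial_t g_3^{(1,k)}$ and likewise for $\zeta$, and using that $\partial_t g_2^{(1,k)},\partial_t g_3^{(1,k)}$ are algebraic over $\C(t)$, evaluation at $\omega=a(t)$ replaces $\wp$ by $c$ and $\partial_{\omega}\wp$ by the algebraic square root above, so the only non-algebraic quantities surviving are $a(t)$ and $\zeta(a(t),t)$, each entering linearly. This produces a system $\tfrac{d}{dt}(a,\zeta(a))^{\mathsf T}=M(t)\,(a,\zeta(a))^{\mathsf T}+v(t)$ with $M(t),v(t)$ having entries algebraic over $\C(t)$. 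D-finiteness is then formal: the entries of $M,v$ and their $t$-derivatives being algebraic, the $\overline{\C(t)}$-span of the iterated $t$-derivatives of $a(t)$ (resp.\ of $\zeta(a(t),t)$) is finite-dimensional, so Lemma~\ref{lem13} gives a linear $\partial_t$-equation with coefficients in $\overline{\C(t)}=\mathbf{k}_t$ and Lemma~\ref{lem14} descends it to $\C(t)$. The degenerate loci are handled directly: when $c=\infty$ the point $a(t)$ is a $\Z$-combination $m\omega_1(t)+nk\omega_2(t)$ of periods, and when $\partial_{\omega}\wp(a,t)=0$ it is a half-period, so $a(t)$ and $\zeta(a(t),t)$ reduce to periods $\omega_1,\omega_2$ and quasi-periods of the family $Y^2=4X^3-g_2^{(1,k)}X-g_3^{(1,k)}$, which are classically D-finite in $t$ via their Picard–Fuchs equations (again concluding through Lemmas~\ref{lem13} and~\ref{lem14}).

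The hard part will be the middle step: making the variation formulas for $\partial_{g_i}\wp$ and $\partial_{g_i}\zeta$ precise and checking that the system genuinely \emph{closes}, i.e.\ that after evaluation at $\omega=a(t)$ no quadratic terms $\omega^2,\zeta^2,\omega\zeta$ appear and every remaining coefficient is algebraic over $\C(t)$. Conceptually this is exactly the statement that $a(t)$ and $\zeta(a(t),t)$ are incomplete elliptic integrals of the first and second kind with algebraic endpoint $c(t)$ and algebraically varying invariants, hence D-finite in the parameter $t$; the linear system is the effective form of that fact. A secondary nuisance is tracking the branch and continuity of $a(t)$ and confirming that the degenerate locus is thin, so that the generic system applies on a dense set and the conclusion propagates.
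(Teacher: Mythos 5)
Your proposal is correct and follows essentially the same route as the paper: both rest on the classical variational formulas for $\partial_{g_2}\wp,\partial_{g_3}\wp,\partial_{g_2}\zeta,\partial_{g_3}\zeta$ (affine in $\omega$ and $\zeta$, coefficients rational in $g_2^{(1,k)},g_3^{(1,k)},1/\Delta$), the chain rule with $\partial_t g_2^{(1,k)},\partial_t g_3^{(1,k)}$ algebraic over $\C(t)$, and the observation that evaluating at $\omega=a(t)$ closes a linear system on the span of $1$, $a(t)$, $\zeta(a(t),t)$ over $\overline{\C(t)}$, concluded via Lemmas~\ref{lem13} and~\ref{lem14}. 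The paper packages this as the $3$-dimensional space $\Lambda_t=\mathbf{k}_t+\mathbf{k}_t\zeta(a(t),t)+\mathbf{k}_t a(t)$ closed under $d/dt$, citing the formulas (18.6.19--22) of Abramowitz--Stegun rather than rederiving them, and it leaves implicit the degenerate loci $\wp(a(t),t)=\infty$ and $\partial_\omega\wp(a(t),t)=0$ that you treat separately.
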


\begin{proof}
We have $\partial_{\omega}\zeta (\omega,t)=-\wp(\omega,t)$, which belongs to $ \overline{{\cx}}$ by Theorem~\ref{thm3}. We now consider the $t$-derivation.  Let us see $\zeta$ and $\wp$ as functions of the invariants 
\begin{equation*}
   g_2:=g_2^{(1,k)} \quad \text{and} \quad g_3:=g_3^{(1,k)}
\end{equation*}
corresponding to the elliptic curve with periods $(\omega_1,k\omega_2)$. By \cite[(18.6.19-22)]{abramowitz1988handbook}, one has
\begin{align*}
   (g_2^3 -27 g_3^2)\partial_{g_2}\wp &=\wp' \left(-\frac{9}{2}g_3\zeta+\frac{1}{4}g_2^2\omega\right)-9g_3\wp^2+\frac{g_{2}^{2}}{2}\wp+\frac{3}{2}g_{2}g_{3},\\
   (g_2^3 -27 g_3^2)\partial_{g_3}\wp &=\wp' \left(3g_2\zeta-\frac{9}{2}g_3\omega\right)+6g_2\wp^2-9g_{3}\wp-g_{2}^2,\\
   (g_2^3 -27 g_3^2)\partial_{g_2}\zeta &=\frac{1}{2}\zeta \left(9g_3\wp+\frac{1}{2}g_2^2\right)-\frac{1}{2}\omega \left(\frac{1}{2}g_2\wp+\frac{3}{4}g_3 \right)+\frac{9}{4}g_3\partial_{\omega}  \wp,\\
   (g_2^3 -27 g_3^2)\partial_{g_3}\zeta &=-3\zeta \left(g_2\wp+\frac{3}{2}g_3\right)+\frac{1}{2}\omega \left(9g_3\wp+\frac{1}{2}g_2^2 \right)+\frac{3}{2}g_2\partial_{\omega}  \wp,
\end{align*}
which may be each rewritten as an equation of the form,  
\begin{equation*}
   \partial_{g_j}y =a_j\zeta +b_j\omega +c_{j},\quad j=2,3,
\end{equation*}
where $a_j$, $b_{j}$ and $c_{j}$ are algebraic over ${\cx}$ and $y=\zeta$ or $\wp$. Substituting the equations with $y=\zeta$, into $\partial_{t}\zeta=(\partial_{t}g_2) \partial_{g_2}\zeta+(\partial_{t}g_3) \partial_{g_3}\zeta$, yields 
\begin{equation}
\label{eq:diff_zeta_t}
   \partial_{t}\zeta =(\partial_{t}g_2 )a_2\zeta +(\partial_{t}g_2 )b_2  \omega +(\partial_{t}g_2 )c_2+(\partial_{t}g_3 )a_3\zeta +(\partial_{t}g_3)b_3 \omega +(\partial_{t}g_3 )c_3.
\end{equation}
Note that  $\partial_{t}g_j $ is algebraic in $t$, as the derivative of an algebraic function (Lemma~\ref{lem4}), so we have $\partial_{t}\zeta(\omega,t)\in \overline{\cx}+\overline{\cx}\zeta(\omega,t)+\overline{\cx}\omega$. By the same analysis, $\partial_{t}\wp(\omega,t)\in \overline{\cx}+\overline{\cx}\zeta(\omega,t)+\overline{\cx}\omega$.

Now assume $a(t)\in X_{t}$, and introduce \begin{equation*}
    \Lambda_{t}=\mathbf{k}_t+\mathbf{k}_t\zeta(a(t),t)+\mathbf{k}_t a(t),
\end{equation*}
where $\mathbf{k}_t$ denotes the field of algebraic functions in $t$ (see Section~\ref{seccrit}). Setting $\omega=a(t)$ above yields
\begin{equation*}
   \partial_{t}\zeta(a(t),t),\partial_{t}\wp(a(t),t)\in \Lambda_{t}.
\end{equation*}
We will now show that $\Lambda_{t}$ is closed under differentiation by $t$. Note that $\mathbf{k}_t$ is closed under differentiation by $t$ and that $\Lambda_{t}$ is closed under multiplication by elements of $\mathbf{k}_t$, so it suffices to show that $a'(t)\in\Lambda_{t}$ and $\frac{d}{dt}\zeta(a(t),t)\in\Lambda_{t}$. First, recall that by the definition of $X_{t}$, along with Lemma \ref{lem7}, we have $\wp(a(t),t)\in\mathbf{k}_t$, so its derivative with respect to $t$ also lies in $\mathbf{k}_t$, that is
\begin{equation*}
   \partial_{\omega}\wp(a(t),t)a'(t)+\partial_{t}\wp(a(t),t)\in\mathbf{k}_t.
\end{equation*}
Combining this with $\partial_{t}\wp(a(t),t)\in\mathbf{k}_t \subset \Lambda_{t}$ and $\partial_{\omega}\wp(a(t),t)\in\mathbf{k}_t$ (from Lemma \ref{lem7}) yields $a'(t)\in\Lambda_{t}.$ Finally,
\begin{equation*}
   \frac{d}{dt}\zeta(a(t),t)=-\wp(a(t),t)a'(t)+\partial_{t}\zeta(a(t),t)\in\Lambda_{t}.
\end{equation*}
Hence $\Lambda_{t}$ is closed under differentiation with respect to $t$. Since $\Lambda_{t}$ has dimension (at most) $3$ as a vector space over $\mathbf{k}_t$, this implies that any element $f(t)\in\Lambda_{t}$ satisfies a non-trivial linear differential equation of order at most $3$ with coefficients in $\mathbf{k}_t$. By Lemma \ref{lem14}, this implies that $f(t)$ is D-finite. In particular, $a(t)$ and $\zeta(a(t),t)$ are both D-finite in $t$.
\end{proof}

\begin{proof}[Proof of Lemma~\ref{lem15}]
First, we observe that it is enough to prove that each of $G(x,t)$ and $H(x,t)$ in \eqref{eq:def_F2_G_H} have the above-mentioned property. Indeed, $\omega_{1}(t)$ is D-finite by \cite[Lem.~6.10]{BeBMRa-21} and so is $\zeta (\frac{\omega_{1}(t)}{2},t)$ by Lemma~\ref{lem:zeta_DF}; recall that D-finite functions are stable by addition and multiplication. 

In order to prove that $G(x,t)$ satisfies a linear differential equation with coefficients in $\mathbb{C}(x,t)$ in each of its variables, we shall apply Proposition~\ref{prop:DF_diff_eval} with $a(t) = x(\frac{\omega_1 (t)}{2},t)$, which is algebraic by Lemma~\ref{lem3} and Proposition \ref{prop:uniformization}. The result then directly follows from Lemma~\ref{lem:zeta_DF}.

We move to the function $H(x,t)$, applying once again Proposition~\ref{prop:DF_diff_eval}. The function $\partial_{\omega} H(x(\omega,t),t)=\partial_{\omega} \omega=1$ is obviously in $\overline{{\cx}}$. Moreover, choosing $a(t)=x(0,t)$, which by Proposition~\ref{prop:uniformization} is algebraic, we find $H(a(t),t)=0$. The proof is complete.
\end{proof}

From what precedes, $F_1(x,t)F_2 (x,t)$ is D-finite, hence $Q(x,0,t)$ is D-finite if and only if $K(x,0,t)Q(x,0,t)-F_1(x,t)F_2 (x,t)$ is D-finite. So by \eqref{eq4x} and \eqref{eq4}, it suffices to show that the pullback of $\psi$, namely $F_3$, is D-finite. This is exactly what the following result aims to achieve, up to an additive constant.

\begin{lem}
\label{lem17}
Let $F_3(x,t)$ be the pullback of $\psi$ appearing in \eqref{eq4} and let $x_0\in \mathbb{P}_{1}(\C)$ such that $K(x_0,0,t)=0$ (noting that $K(x_0,0,t)$ does not depend on $t$). Then $F_3(x,t)-F_3(x_0,t)$ is D-finite.
\end{lem}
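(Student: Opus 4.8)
The plan is to apply Theorem~\ref{thm2} to the function $f(\omega,t)=\psi(\omega,t)$, with the D-finite choice $\mathbf{k}=\overline{{\cx}}$ and $\mathbf{R}$ the ring of functions solving linear $\partial_\omega$- and $\partial_t$-differential equations over $\overline{{\cx}}$, and then to transfer the conclusion $\psi\in\mathbf{R}$ back to the $(x,t)$-world via Proposition~\ref{prop:DF_diff_eval}. The starting observation is that $\psi(\omega,t)=r_x(\omega,t)-\mathcal{O}_x(\omega,t)\phi(\omega,t)$ by \eqref{eq4}, and in the finite group case $\psi$ is $(\omega_1(t),k\omega_2(t))$-elliptic (this is precisely the periodicity statement recorded in the strategy subsection, following from \eqref{eqrx_transform_finite_group} together with $\omega_3/\omega_2=k/\ell$). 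Thus condition~\ref{thm2:condition_elliptic} of Theorem~\ref{thm2} holds. Assumption~\ref{assu1} for the periods $(\omega_1,k\omega_2)$ is exactly Theorem~\ref{thm3}, which gives $\wp^{(1,k)}\in\overline{{\cx}}=\mathbf{k}$ and $g_2^{(1,k)},g_3^{(1,k)}\in\overline{\mathbb C(t)}=\mathbf{k}_t$.

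The remaining work is to verify conditions~\ref{thm2:condition_nicepoles}--\ref{thm2:condition_existsnicea} for $\psi$. First I would address the poles: since $\mathcal{O}_x$ is algebraic over ${\cx}$ by Lemma~\ref{lem16} and $\phi$ is built from $\zeta$ and $\omega$, the poles of $\psi$ are contained among the poles of $r_x$ and the poles of $\phi$. The poles of $r_x$ were shown to lie in $X_t$ in the proof of Lemma~\ref{lem1:2}\ref{lem1:condition2}, and that argument (resting on the description of poles as $b+\ell\omega_3$ with $b$ a pole of $x(\omega)$ or $y(-\omega)$, combined with the addition formula and Lemma~\ref{lem7}) applies verbatim here; the pole of $\phi$ is at the lattice point $\omega=0$, where $\wp^{(1,k)}(0,t)=\infty$, so $0\in X_t$ as well. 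This yields condition~\ref{thm2:condition_nicepoles}. For condition~\ref{thm2:condition_principalcoefficients}, I would argue that the principal parts of $\psi$ have coefficients in $\mathbf{R}_t$: the principal parts of $r_x$ have algebraic (hence $\mathbf{k}_t\subset\mathbf{R}_t$) coefficients by the argument of Lemma~\ref{lem1:2}\ref{lem1:condition3}, while the contribution of $\mathcal{O}_x\phi$ to each principal part is governed by the simple pole of $\zeta$ at $0$ and by $\mathcal{O}_x$, whose Laurent coefficients at points of $X_t$ lie in $\mathbf{k}_t$ by Lemma~\ref{lem:alg_coeffs}; the coefficient $\frac{\omega_1(t)}{2\mathbf{i}\pi}$ multiplying the principal part of $\zeta$ is D-finite in $t$, hence in $\mathbf{R}_t$, so all principal-part coefficients of $\psi$ lie in $\mathbf{R}_t$.

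For condition~\ref{thm2:condition_existsnicea}, I would take $a(t)=\omega_0(t)$ a root of $y(\omega_0,t)=0$ as in Lemma~\ref{lem1:2}\ref{lem1:condition4}, so that $a(t)\in X_t$ and $r_x(\omega_0,t)$ is algebraic over $\mathbb C(t)$ (hence in $\mathbf{R}_t$); I then need $\mathcal{O}_x(\omega_0,t)\phi(\omega_0,t)\in\mathbf{R}_t$. Here $\mathcal{O}_x(\omega_0,t)\in\mathbf{k}_t$ by Lemma~\ref{lem:alg_coeffs}, and $\phi(\omega_0,t)=\frac{\omega_1(t)}{2\mathbf{i}\pi}\zeta(\omega_0,t)-\frac{\omega_0}{\mathbf{i}\pi}\zeta(\tfrac{\omega_1(t)}{2},t)$ is D-finite in $t$ by Lemma~\ref{lem:zeta_DF} (which gives both $\omega_0=a(t)$ and $\zeta(a(t),t)$ D-finite, with $\omega_1$ D-finite by \cite[Lem.~6.10]{BeBMRa-21}), so $\psi(\omega_0,t)\in\mathbf{R}_t$. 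Theorem~\ref{thm2} then yields $\psi\in\mathbf{R}$, i.e.\ $\psi$ is D-finite in both $\omega$ and $t$ over $\overline{{\cx}}$, and in particular $\partial_\omega\psi\in\overline{{\cx}}$. Finally, applying Proposition~\ref{prop:DF_diff_eval} with the lift $f=\psi$, the evaluation point $a(t)=x_0$ (a root of $K(x_0,0,t)=0$, where $F_3(x_0,t)=\psi(\omega(t),t)$ is D-finite in $t$ by the established $\mathbf{R}_t$-membership), we obtain that $F_3(x,t)$ is D-finite in both variables, up to the additive constant $F_3(x_0,t)$ absorbed into the statement; this gives $F_3(x,t)-F_3(x_0,t)$ D-finite as claimed. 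The main obstacle I anticipate is the bookkeeping at condition~\ref{thm2:condition_principalcoefficients}: one must check that the transcendental factor $\omega_1(t)/(2\mathbf{i}\pi)$ entering via $\zeta$ does not escape $\mathbf{R}_t$, which is exactly why the D-finiteness of $\omega_1$ and of the zeta-values from Lemma~\ref{lem:zeta_DF} is indispensable, rather than mere algebraicity as in the algebraic case of Section~\ref{secalg}.
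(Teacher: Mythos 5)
Your proof follows the paper's strategy: run $\psi$ through Theorem~\ref{thm2} in the D-finite setting, with conditions \ref{thm2:condition_elliptic}--\ref{thm2:condition_principalcoefficients} supplied by the ellipticity of $\psi$, Lemma~\ref{lem1:2}, and an analysis of $\mathcal{O}_x\phi$ (which the paper packages as Lemmas~\ref{lem18} and~\ref{lem20}; note in passing that your pole bookkeeping omits the poles of $\mathcal{O}_x$ itself, which are not poles of $\rx$ or of $\phi$, but do lie in $X_t$ by the same $b+\ell\omega_3$ argument you quote). Where you genuinely deviate is condition~\ref{thm2:condition_existsnicea}: you verify it directly for $\psi$ by evaluating $\psi(\omega_0,t)=\rx(\omega_0,t)-\mathcal{O}_x(\omega_0,t)\phi(\omega_0,t)$ and invoking Lemma~\ref{lem:zeta_DF} together with the D-finiteness of $\omega_1(t)$. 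That step is sound and actually stronger than what the paper does: the paper does not assert $\psi(\omega_0,t)\in\mathbf{R}_t$ at this stage, but instead applies the decomposition from the proof of Theorem~\ref{thm2} to $\psi(\omega,t)-\psi(\omega_0,t)$, which vanishes at $\omega_0$; this is exactly why the lemma is stated for the difference $F_3(x,t)-F_3(x_0,t)$, with $F_3(x_0,t)$ only shown to be D-finite later, in the proof of Theorem~\ref{thm:main_section_DF}, via the functional equation.

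The genuine gap is your last step, the transfer back to the $(x,t)$-variables. From $\psi\in\mathbf{R}$ you conclude ``in particular $\partial_\omega\psi\in\overline{\cx}$'': this is a non sequitur, since membership in $\mathbf{R}$ means $\psi$ satisfies \emph{linear differential equations} with coefficients in $\overline{\cx}$, not that $\partial_\omega\psi$ is \emph{algebraic} over $\cx$. And there is no reason for it to hold here: by \eqref{eq4}, $\partial_\omega\psi=\partial_\omega\rx-\partial_\omega(\mathcal{O}_x)\,\phi-\mathcal{O}_x\,\partial_\omega\phi$ with $\partial_\omega\phi=-\frac{\omega_1(t)}{2\mathbf{i}\pi}\wp^{(1,k)}(\omega,t)-\frac{1}{\mathbf{i}\pi}\zeta\bigl(\tfrac{\omega_1(t)}{2},t\bigr)$, whose $t$-coefficients $\omega_1(t)$ and $\zeta(\omega_1(t)/2,t)$ are exactly the quantities this section treats as transcendental over $\mathbb{C}(t)$ (they are only known to be D-finite). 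So in the non-zero orbit-sum case --- the only case where this lemma adds anything to Section~\ref{secalg} --- the hypothesis $\partial_\omega f\in\overline{\cx}$ of Proposition~\ref{prop:DF_diff_eval} is not available, and that hypothesis is what drives its proof (it is what makes $\partial_x F$ algebraic). The repair is short and is the paper's own closing step: the proof of Theorem~\ref{thm2} exhibits $\psi(\omega,t)=\psi(\omega_0,t)+\sum_i f_i(t)\psi_i(\omega,t)$ with $f_i\in\mathbf{R}_t$ and $\psi_i\in\overline{\cx}$; pulling back, each $\psi_i$ becomes algebraic over $\mathbb{C}(x,t)$, hence D-finite in both variables, and the ring structure of D-finite functions then gives that $F_3(x,t)-F_3(x_0,t)$ is D-finite. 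Combined with your verification of condition~\ref{thm2:condition_existsnicea}, which yields $\psi(\omega_0,t)\in\mathbf{R}_t$, this would even give D-finiteness of $F_3(x,t)$ itself.
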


As in the algebraic case, we will prove the above statement using Theorem~\ref{thm2}; we thus need to have a control on the poles and principal parts of $\psi$. Lemma~\ref{lem1:2} already contains precise properties of the poles and principal parts of $\rx$, and we now move to the function $\mathcal{O}_x \phi $. To that purpose, we first consider $\mathcal{O}_x$ and $\phi$ separately. Let us recall that the set $X_t$ has been defined above Theorem~\ref{thm2}, see \eqref{eq:def_X_t}. 
\begin{lem}
\label{lem18}
Let $f = \mathcal{O}_x$ or  $f=\phi $.
\begin{enumerate}
    \item\label{it:1:lem18}For all $t\in (0,1)$, the  poles $b_j(t)$ of $\omega\mapsto f(\omega,t)$ belong to $X_t$.
    \item\label{it:2:lem18}The coefficients of the expansion of $\omega\mapsto f(\omega,t)$ around any pole $b(t)$ of $\mathcal{O}_x$ or  $\phi $ are D-finite.
\end{enumerate}
\end{lem}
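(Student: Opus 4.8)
The plan is to treat $\mathcal{O}_x$ and $\phi$ separately; the key difference is that $\mathcal{O}_x\in\mathbf{k}=\overline{\cx}$ by Lemma~\ref{lem16}, whereas $\phi$ is not even elliptic, so the statements for $\mathcal{O}_x$ will follow formally from earlier results while $\phi$ is where the work lies. For $f=\mathcal{O}_x$, I would first use \eqref{eqrx_transform_finite_group} together with $\ell\omega_3=k\omega_2$ (from \eqref{eq:ratio_periods}) to write $\mathcal{O}_x(\omega,t)=\rx(\omega+k\omega_2,t)-\rx(\omega,t)$. Every pole of $\mathcal{O}_x$ is therefore a pole of $\rx$ or the $-k\omega_2$ translate of one; by item~\ref{lem1:condition2} of Lemma~\ref{lem1:2} the poles of $\rx$ lie in $X_t$, and since $\wp^{(1,k)}$ is $k\omega_2$-periodic the set $X_t$ is invariant under translation by $k\omega_2$. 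This proves item~\ref{it:1:lem18} for $\mathcal{O}_x$.

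Next, for $f=\phi$, item~\ref{it:1:lem18} is immediate: the summand $\frac{\omega}{\mathbf{i}\pi}\zeta(\frac{\omega_1(t)}{2},t)$ is entire in $\omega$ and $\zeta$ (taken with periods $(\omega_1,k\omega_2)$) has simple poles exactly on the lattice $\omega_1(t)\Z+k\omega_2(t)\Z$; hence the poles of $\phi$ are precisely these lattice points, at which $\wp^{(1,k)}=\infty$, so they lie in $X_t$. In particular the poles of both $\mathcal{O}_x$ and $\phi$ lie in $X_t$. For item~\ref{it:2:lem18} with $f=\mathcal{O}_x$, since $\mathcal{O}_x\in\mathbf{k}$ and all these poles lie in $X_t$, Lemma~\ref{lem:alg_coeffs} shows that the coefficients of the expansion of $\mathcal{O}_x$ at any of them lie in $\mathbf{k}_t=\overline{\C(t)}$; being algebraic over $\C(t)$, they are D-finite by \cite[Prop.~2.3]{lipshitz1989d}.

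The substance is item~\ref{it:2:lem18} for $\phi$, where I must expand $\phi$ around an arbitrary $b\in X_t$. At a non-lattice $b$ the expansion is a Taylor series whose coefficients are combinations of $\omega_1$, $\zeta(\frac{\omega_1}{2},t)$, $\zeta(b,t)$ and the derivatives $\partial_\omega^{j}\wp^{(1,k)}(b,t)$; these last lie in $\mathbf{k}_t$ by Lemma~\ref{rem1} (as $\wp^{(1,k)}(b,t)\in\mathbf{k}_t$), and the other quantities are handled below. At a lattice point $b=m\omega_1+nk\omega_2$, using the quasi-periodicity of $\zeta$ I would write $\zeta(\omega,t)=\frac{1}{\omega-b}+(m\eta_1+n\eta_2)+\sum_{j\geq1}c_{2j+1}(t)(\omega-b)^{2j+1}$, where $\eta_1=2\zeta(\frac{\omega_1}{2},t)$ and $\eta_2=2\zeta(\frac{k\omega_2}{2},t)$ are the quasi-periods and, as for $\wp$ (whose Laurent coefficients are rational-coefficient polynomials in $g_2,g_3$), the $c_{2j+1}(t)$ are polynomials in $g_2^{(1,k)},g_3^{(1,k)}$. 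Collecting powers of $\omega-b$, the nonzero coefficients of $\phi$ are $\frac{\omega_1}{2\mathbf{i}\pi}$, the constant $\frac{\omega_1}{2\mathbf{i}\pi}(m\eta_1+n\eta_2)-\frac{b}{\mathbf{i}\pi}\zeta(\frac{\omega_1}{2},t)$, the linear coefficient $-\frac{1}{\mathbf{i}\pi}\zeta(\frac{\omega_1}{2},t)$, and $\frac{\omega_1}{2\mathbf{i}\pi}c_{2j+1}(t)$ for $j\geq1$.

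The main obstacle, as anticipated, is the constant term: it is the only place carrying the quasi-periods $\eta_1,\eta_2$, the value $\zeta(b,t)$ and the period $b$, transcendental quantities not controlled by the algebraic arguments of Section~\ref{secalg}. The resolution is that $\frac{\omega_1}{2}$ and $\frac{k\omega_2}{2}$ are half-periods of $\wp^{(1,k)}$, so $\partial_\omega\wp^{(1,k)}$ vanishes there and $\wp^{(1,k)}(\frac{\omega_1}{2},t),\wp^{(1,k)}(\frac{k\omega_2}{2},t)$ are roots of $4w^3-g_2^{(1,k)}w-g_3^{(1,k)}$, hence algebraic over $\C(t)$ by Lemma~\ref{lem8}; thus $\frac{\omega_1}{2},\frac{k\omega_2}{2}\in X_t$. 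Lemma~\ref{lem:zeta_DF} then yields simultaneously that $\frac{\omega_1}{2}$, $\frac{k\omega_2}{2}$, $\zeta(\frac{\omega_1}{2},t)$, $\zeta(\frac{k\omega_2}{2},t)$, and $\zeta(b,t)$ for any $b\in X_t$, are all D-finite in $t$. Hence $\omega_1,\omega_2,\eta_1,\eta_2$ and $b=m\omega_1+nk\omega_2$ are D-finite; each $c_{2j+1}(t)$ is algebraic over $\C(t)$ (Lemma~\ref{lem8}) and so D-finite; and since the D-finite functions of $t$ form a ring, every coefficient displayed above is D-finite. The same bookkeeping settles the non-lattice case, so item~\ref{it:2:lem18} holds for $\phi$, completing the proof.
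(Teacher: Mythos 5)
Your proposal is correct and follows essentially the same route as the paper's own proof: the poles are located via Lemma~\ref{lem1:2} (and the lattice structure of the poles of $\zeta$), the coefficients for $\mathcal{O}_x$ are handled by Lemma~\ref{lem16} together with Lemma~\ref{lem:alg_coeffs}, and the expansion of $\phi$ is treated through the quasi-periodicity of $\zeta$ combined with Lemma~\ref{lem:zeta_DF} and the algebraicity of $g_2^{(1,k)},g_3^{(1,k)}$. The only (harmless) variations are that you obtain the poles of $\mathcal{O}_x$ from the identity $\mathcal{O}_x(\omega,t)=\rx(\omega+k\omega_2,t)-\rx(\omega,t)$ and the $k\omega_2$-translation-invariance of $X_t$, whereas the paper uses the representation $\mathcal{O}_x=\sum_j b_x(\cdot+j\omega_3)$ and the argument inside the proof of Lemma~\ref{lem1:2}, and that you write out the Laurent coefficients of $\zeta$ explicitly (checking $\omega_1/2,\,k\omega_2/2\in X_t$ via the half-period argument) rather than integrating the expansion of $-\wp^{(1,k)}$ as the paper does.
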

\begin{proof}
The poles of $\phi$ form the lattice $\omega_1 (t)\Z+ k\omega_2 (t)\Z$, while the poles of $\mathcal{O}_x$ are of the form $b+\ell \omega_3$, with $b$ a pole of $b_x(\omega,t)$. As we can see in the proof of Lemma~\ref{lem1:2},  $b+\ell \omega_3\in X_t$.  In both cases, this shows the first point \ref{it:1:lem18}.

For the second point, $\mathcal{O}_x\in\cx$, so the result for $f = \mathcal{O}_x$ follows from Lemma \ref{lem:alg_coeffs}.
Consider now $\phi$. By Lemma \ref{lem:zeta_DF}, each pole $b(t)$ is D-finite, so by the definition of $\phi$, it suffices to prove the same result for $\zeta(\omega,t)$. Note that $\partial_{\omega}\zeta(\omega,t)=-\wp^{(k,1)}(\omega,t)$. By Lemma~\ref{lem:alg_coeffs}, the coefficients of the expansion of $\wp^{(k,1)}(\omega,t)$ around $\omega=b(t)$ are algebraic, so the same holds for the non-constant coefficients of $\zeta(\omega,t)$. For the constant coefficients, if $b(t)$ is not a pole of $\zeta(\omega,t)$ then $\zeta(b(t),t)$ is D-finite by Lemma \ref{lem:zeta_DF}. If $b(t)$ is a pole of $\zeta(\omega,t)$, then $b(t)=m\omega_1 (t)+ kn\omega_2 (t)$ for some integers $m,n$ and the constant coefficient is $2m\zeta(\omega_{1}(t)/2,t)+2kn\zeta(k\omega_{t}(t)/2,t)$, which is D-finite.
\end{proof}

\begin{lem}\label{lem20}
The coefficients of the principal parts  of $\omega\mapsto \mathcal{O}_x(\omega,t)\phi(\omega,t)$ are D-finite.
\end{lem}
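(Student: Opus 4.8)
The plan is to deduce the statement directly from Lemma~\ref{lem18} together with the elementary fact that D-finite functions of the single variable $t$ form a ring. The key observation is that the coefficients of the principal part of a product of two meromorphic functions at a given point are finite sums of products of the Laurent coefficients of the two factors at that point; hence it suffices to control the \emph{entire} Laurent expansions of $\mathcal{O}_x$ and of $\phi$ at every pole of the product $\mathcal{O}_x\phi$, not merely their principal parts.

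First I would note that any pole $b(t)$ of $\omega\mapsto\mathcal{O}_x(\omega,t)\phi(\omega,t)$ must be a pole of $\mathcal{O}_x$ or of $\phi$, since the product of two functions holomorphic at a point is again holomorphic there. This is precisely the situation covered by Lemma~\ref{lem18}\ref{it:2:lem18}, which asserts D-finiteness of the expansion coefficients of \emph{each} of $\mathcal{O}_x$ and $\phi$ around \emph{any} pole of $\mathcal{O}_x$ or $\phi$. In particular, at such a $b(t)$ both factors admit Laurent expansions
\begin{equation*}
   \mathcal{O}_x(\omega,t)=\sum_{\ell\geq\nu}a_\ell(t)(\omega-b(t))^\ell,\qquad \phi(\omega,t)=\sum_{m\geq\mu}c_m(t)(\omega-b(t))^m,
\end{equation*}
with all $a_\ell(t)$ and $c_m(t)$ D-finite in $t$. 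Importantly, when $b(t)$ is a pole of only one of the two factors, the other expansion is an ordinary Taylor series whose coefficients are still D-finite, precisely because Lemma~\ref{lem18} permits $b(t)$ to be a pole of the \emph{other} function.

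I would then form the Cauchy product: the coefficient of $(\omega-b(t))^{-n}$ in $\mathcal{O}_x\phi$ equals $\sum_{\ell+m=-n}a_\ell(t)c_m(t)$, a finite sum of products of D-finite functions of $t$ (the constraints $\ell\geq\nu$ and $m\geq\mu$ leave only finitely many terms). Since D-finite functions of $t$ are stable under both multiplication and addition, each such coefficient is D-finite; in particular every coefficient of every principal part of $\mathcal{O}_x\phi$ is D-finite, which is the desired conclusion.

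I do not anticipate a serious obstacle here: the genuine work has already been absorbed into Lemma~\ref{lem18}, whose formulation—granting D-finiteness of the \emph{full} expansion coefficients of both factors around any pole of either—was evidently tailored so that the cross-terms in the product cause no difficulty. The only mild point to keep in mind is that one must use the complete Laurent expansions rather than merely the principal parts of the two factors, since regular (nonnegative-order) terms of one factor contribute to the principal part of the product; this is exactly why the input of Lemma~\ref{lem18} is stated for all expansion coefficients and not only for the principal parts.
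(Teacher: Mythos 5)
Your proof is correct and follows exactly the paper's route: the paper's own proof is the one-line remark that the claim ``follows immediately from Lemma~\ref{lem18}~\ref{it:2:lem18}'', and your argument simply makes that immediacy explicit via the Cauchy product and the ring structure of D-finite functions. Your observation that Lemma~\ref{lem18}~\ref{it:2:lem18} is deliberately stated for full expansions around poles of \emph{either} function is precisely the point that makes the deduction work.
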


\begin{proof}
This follows immediately from Lemma~\ref{lem18} \ref{it:2:lem18}.
\end{proof}

\begin{proof}[Proof of Lemma~\ref{lem17}]
The poles of $\psi (\omega,t)=\rx (\omega,t)-\mathcal{O}_x (\omega,t)\phi (\omega,t)$ belong to $X_t$ by Lemma~\ref{lem18}. The principal parts of $\rx (\omega,t)$ are algebraic (and in particular D-finite). By Lemma~\ref{lem20}, the same holds for  the principal parts of $\omega\mapsto \mathcal{O}_x(\omega,t)\phi(\omega,t)$, and hence the principal parts of  $\psi$ are D-finite and its poles are algebraic.    
 
Consider now a complex number $\omega_0$ with $y(\omega_0 ,t)=0$ and  $x(\omega_0 ,t)=x_0$.  Then $\psi(\omega,t)-\psi(\omega_0,t)$ vanishes at $\omega_0$.  
Following the proof of Theorem~\ref{thm2}, the function $\psi$ admits a decomposition 
\begin{equation*}
   \psi(\omega,t) =\psi(\omega_0,t)+\sum_{i}f_i (t)\psi_i (\omega,t),
\end{equation*}
where $\psi_i\in \overline{{\cx}}$ and $f_i$ are D-finite.  So $F_3(x,t)-F_3(x_0,t)$  admits a decomposition into sum and product of D-finite elements, so is D-finite.  
\end{proof}

\subsection{Conclusion: proof of Theorem~\ref{thm:main_section_DF}}

\begin{proof}
Let us write 
\begin{equation*}
   \rx (\omega,t)-\psi(\omega_0,t)=\psi (\omega,t)-\psi(\omega_0,t)+ \mathcal{O}_x (\omega,t)\phi (\omega,t).
\end{equation*}
From what precedes $K(x,0,t)Q(x,0,t)-\psi(\omega_0,t)=K(x,0,t)Q(x,0,t)-F_3(x_0,t)$ is D-finite.  Then $K(x_0,0,t)Q(x_0,0,t)-F_3(x_0,t)$ is D-finite. We have seen in Lemma~\ref{lem1:2} that $\rx (\omega_0,t)=K(x_0,0,t)Q(x_0,0,t)$ is algebraic (and therefore D-finite). Then $F_3(x_0,t)$ is D-finite. Since both $F_3(x_0,t)$ and $K(x,0,t)Q(x,0,t)-F_3(x_0,t)$ are D-finite, we deduce  the D-finiteness of $K(x,0,t)Q(x,0,t)$. Hence  $Q(x,0,t)$ is D-finite.  Similarly, the D-finiteness of  $Q(0,y,t)$ follows.   We conclude with the functional equation \eqref{eq:funcequ}. 
\end{proof}

\section{Infinite group case}
\label{sec:infinite_group_case}

It remains to treat the infinite group case, and to provide the proof of Theorem~\ref{thm4}. 
\begin{proof}
Recall from \cite[Lem.~8.16]{EP-22} that some $\varepsilon>0$  necessarily exists such that for $t\in(0,\varepsilon)$, $Q(x,0,t)\notin \C(x)$ and $Q(0,y,t)\notin \C(y)$. More generally, Theorem 8.7 in that article shows that $Q(x,y,t)$ is D-finite in $x$ (or $y$) for all fixed $t\in(0,1)$ if and only if the group is finite. 
Recall from Section~\ref{secnot} that if the kernel curve is degenerate (resp.\ has genus $0$), the generating function is algebraic (resp.\ algebraic or differentially transcendental) in its three variables. So let us  assume that the kernel defines an elliptic curve. 

By \cite[Thm~1.1]{dreyfus2021differential}, the series is $\partial_x$-algebraic if and only if it is $\partial_y$-algebraic, if and only if it is $\partial_t$-algebraic.  Then, when  the series is $\partial_x$-transcendental, it is  $\partial_y$-transcendental and $\partial_t$-transcendental. Hence it is not D-finite in each of its variables. Similarly, when the series is $\partial_y$-transcendental (resp.\ $\partial_t$-transcendental), it is not D-finite in each of its variables. So Theorem \ref{thm4} holds when the series is differentially transcendental in one of its variables. 

So we can focus on the situation where the series is differentially algebraic in each of its variables.
By \eqref{eq:funcequ}, it suffices to show that $Q(x,0,t)$ and $Q(0,y,t)$ are not D-finite in each of their variables. Let us focus on $Q(x,0,t)$, the proof for $Q(0,y,t)$ being similar. 

Let us begin by the non-D-finiteness in $x$. The strategy is heavily inspired by the one used in \cite{KuRa12}. For any value of $0<t<1$, the function $Q(x,0,t)$ admits a meromorphic continuation, see \eqref{eq:liftings_GF}.  Using \cite[Prop.~3.9]{hardouin2021differentially}, the continuation of $K(x,0,t)Q(x,0,t)$ admits the form 
\begin{equation*}
    r_x (\omega,t)=f(x(\omega,t),t)+g(\omega,t),
\end{equation*}
where $f\in \C(x,t)$ is a rational function (called decoupling function)\ and $g$ is $\sigma$-invariant, meaning $\omega_3$-periodic. Since $r_x (\omega,t)$ and $f(x(\omega,t),t)$ are $\omega_1(t)$-periodic, the same holds for $g(\omega,t)$ proving that it is elliptic with the periods $\omega_1(t),\omega_3 (t)$.
Let $F(x,0,t)$ be the pullback of  $g(\omega,t)$. 
To the contrary, assume that we have a D-finite relation 
\begin{equation}
\label{eq:contradiction_identity_DF}
   \sum_{i=0}^n b_i (x,t)  \partial_x^i F(x,0,t)=0, \quad b_i\in \C[x,t], \quad b_n\neq 0.
\end{equation}
Fix $t_0\in(0,\varepsilon)$ such that the group is infinite and $(t-t_0 )$ does not divide $b_n (x,t)$. Recall that for all $0<t_0<\varepsilon$, $Q(x,0,t_0)\notin \C(x)$ so $\omega\mapsto g(\omega,t_{0})$ is not constant.  Then $g(\omega,t_0)$ admits at least one pole and since it is $\omega_3 (t_0)$-periodic and $\omega_2(t_0)/\omega_3(t_0)\notin \Q$ it admits an infinite number of poles modulo $\omega_2(t_0)\Z$. The pullback $F(x,0,t_0)$ is a multivalued function in $x$ and the set of the projection of the poles forms an infinite set. 
By the Cauchy-Lipschitz theorem, the poles of $F$ must be located at the zeros of $b_n (x,t_0)$. Since there are an infinite number of such poles, we find that $(t-t_0 )$ divides $b_n (x,t_0)$. A contradiction. 

Consider the $t$-holonomy.
To the contrary, assume that we have a D-finite relation 
\begin{equation}
\label{eq:tcontradiction_identity_DF}
   \sum_{i=0}^n b_i (x,t)  \partial_t^i F(x,0,t)=0, \quad b_i\in \C[x,t], \quad b_n\neq 0.
\end{equation}
Again we fix $t_{0}\in(0,\varepsilon)$ in the $t$-plane such that $(t-t_0)$ does not divide $b_n (x,t)$, and the group is infinite for this value of $t$ (that is $\omega_{3}(t_{0})/\omega_{2}(t_{0})\notin\mathbb{Q}$). We repeat the above reasoning to deduce that $F(x,0,t_{0})$ is a multivalued function in $x$ and the projection of the poles form an infinite set. Moreover, due to the meromorphicity of $g(\omega,t)$, the differential equation \eqref{eq:tcontradiction_identity_DF} holds on every sheet of $F(x,0,t)$.   

Since $(t-t_0)$ does not divide $b_n (x,t)$, we may choose one pole $x_{c}\in\mathbb{C}$ such that $b_{n} (x_{c},t_{0})\neq 0$, and we may analyse $F(x_{c},0,t)$ for $t$ in the vicinity of $t_{0}$. Again by the Cauchy-Lipschitz theorem, if $t\mapsto F(x_c,0,t)$ is meromorphic, then the poles of $F(x_{c},0,t)$ must be located at the zeros of $b_n (x_c,t)$. Since $b_{n} (x_{c},t_{0})\neq 0$, we deduce that $t\mapsto F(x_c,0,t)$ is not meromorphic and therefore  $F(x_{c},0,t)=\infty$ for all $t$. Letting $\omega=b(t)$ denote the corresponding pole of $g(\omega,t)$, this implies that $x(b(t),t)$ is constant, that is, it doesn't depend on $t$. Similarly $x(b(t)+k\omega_{3}(t),t)$ is constant for infinitely many integers $k$, which implies that $x(b(t)+k\omega_{3}(t)+j\omega_{2}(t),t)$ is also constant for any integer $j$. 

Now, if $\frac{\omega_{3}(t)}{\omega_{2}(t)}$ is not constant, we can choose $t_{1}$ near $t_{0}$ such that 
\begin{equation*}
    \frac{\omega_{3}(t_{1})}{\omega_{2}(t_{1})} = \frac{j}{k} \in\mathbb{Q}.
\end{equation*}
The function $x(b(t)+mk\omega_{3}(t)-mj\omega_{2}(t),t)$ is constant for infinitely many integers $m$, and these functions are all equal at $t=t_{1}$, hence these functions are all equal to the same constant. Consider three such values $m_{1},m_{2},m_{3}$ and consider $t_{2}\neq t_1$ so that $\frac{\omega_{3}(t_{2})}{\omega_{2}(t_{2})} \neq \frac{j}{k}$ but such that $t_2$ is sufficiently close to $t_1$ such that the distinct values $z_{1},z_{2},z_{3}$ defined by $z_{i}=b(t_2)+m_i k\omega_{3}(t_2)-m_i j\omega_{2}(t_2)$ all lie in the same fundamental domain of $x(\omega,t_{2})$. Then the results above imply that $x(z_{1},t_{2})=x(z_{2},t_{2})=x(z_{3},t_{2})=x(b(t_{1}),t_{1})$, but this contradicts the fact that $x(\omega,t_{2})$ takes every value exactly twice in each fundamental domain.

Finally, if $\frac{\omega_{3}(t)}{\omega_{2}(t)}$ is constant, then it is irrational as we have assumed that the group is infinite. Now for any $\alpha\in\mathbb{R}$, we can choose a sequence $(j_{1},k_{1}),(j_{2},k_{2}),\ldots$ of pairs of integers such that $k_{n}\omega_{3}(t)-j_{n}\omega_{2}(t)\to \alpha\omega_{2}(t)$ as $n\to\infty$. Then 
\begin{equation*}
   x(b(t)+k_n\omega_{3}(t)-j_n\omega_{2}(t),t)\to x(b(t)+\alpha\omega_{2}(t),t)    
\end{equation*}
for each fixed $t$. Hence, $x(b(t)+\alpha\omega_{2}(t),t)$ does not depend on $t$, so it only depends on $\alpha$. Hence, for $t_{1}$ sufficiently close to $t_{0}$, we have
\[x(b(t_{0})+\alpha\omega_{2}(t_{0}),t_{0})-x(b(t_{1})+\alpha\omega_{2}(t_{1}),t_{1})=0,\]
for $\alpha\in\mathbb{R}$. Since this is a meromorphic function of $\alpha$, it must be $0$ for any $\alpha\in\mathbb{C}$, that is
\[x(b(t_{0})+\alpha\omega_{2}(t_{0}),t_{0})=x(b(t_{1})+\alpha\omega_{2}(t_{1}),t_{1}).\]
However, this implies that the two sides of this expression, as functions of $\alpha$, have the same minimal ratio of periods $\omega_{1}(t_0)/\omega_{2}(t_0)=\omega_{1}(t_1)/\omega_{2}(t_1)$. Since this applies for all $t_1$ sufficiently close to $t_0$, the ratio $\omega_{1}(t)/\omega_{2}(t)$ must not depend on $t$. This is a contradiction, however, as $\omega_{1}(t)/\omega_{2}(t)\to 0$ as $t\to 0$.
Indeed, using \cite[Eq.~(6.5)]{BeBMRa-21}
one has 
\begin{equation*}
    \frac{\omega_{1}(t)}{\omega_{2}(t)} = i\frac{K(\sqrt{1-k})}{K(\sqrt{k})}
\end{equation*}
and $k\to 1$ as $t\to0$, see \cite[Eq.~(7.26)]{KuRa12}. On the other hand, since the kernel is an elliptic curve, $\omega_{1}(t)\neq 0$, contradicting the fact that $\omega_{1}(t)/\omega_{2}(t)$ must not depend on $t$.
\end{proof}

\subsection*{Acknowledgments}
We thank Mireille Bousquet-M\'elou for interesting discussions at an early stage of the project. We thank Charlotte Hardouin and Michael F.\ Singer for very useful discussions.  
  
    \bibliographystyle{abbrv}
\bibliography{biblio}

\begin{thebibliography}{10}

\bibitem{abramowitz1988handbook}
M.~Abramowitz and I.~A. Stegun, editors.
\newblock {\em Handbook of mathematical functions with formulas, graphs, and
  mathematical tables}.
\newblock Dover Publications, Inc., New York, 1992.
\newblock Reprint of the 1972 edition.

\bibitem{akhiezer1990elements}
N.~I. Akhiezer.
\newblock {\em Elements of the theory of elliptic functions}, volume~79 of {\em
  Translations of Mathematical Monographs}.
\newblock American Mathematical Society, Providence, RI, 1990.
\newblock Translated from the second Russian edition by H. H. McFaden.

\bibitem{BeBMRa-21}
O.~Bernardi, M.~Bousquet-M\'{e}lou, and K.~Raschel.
\newblock Counting quadrant walks via {T}utte's invariant method.
\newblock {\em Comb. Theory}, 1:Paper No. 3, 77, 2021.

\bibitem{Pech}
A.~Bostan, F.~Chyzak, M.~van Hoeij, M.~Kauers, and L.~Pech.
\newblock Hypergeometric expressions for generating functions of walks with
  small steps in the quarter plane.
\newblock {\em European J. Combin.}, 61:242--275, 2017.

\bibitem{Bostan10}
A.~Bostan and M.~Kauers.
\newblock The complete generating function for {G}essel walks is algebraic.
\newblock {\em Proc. Amer. Math. Soc.}, 138(9):3063--3078, 2010.
\newblock With an appendix by Mark van Hoeij.

\bibitem{BRS14}
A.~Bostan, K.~Raschel, and B.~Salvy.
\newblock Non-{D}-finite excursions in the quarter plane.
\newblock {\em J. Combin. Theory Ser. A}, 121:45--63, 2014.

\bibitem{BMM}
M.~Bousquet-M\'elou and M.~Mishna.
\newblock Walks with small steps in the quarter plane.
\newblock In {\em Algorithmic probability and combinatorics}, volume 520 of
  {\em Contemp. Math.}, pages 1--39. Amer. Math. Soc., Providence, RI, 2010.

\bibitem{BMP}
M.~Bousquet-M\'elou and M.~Petkov\v{s}ek.
\newblock Linear recurrences with constant coefficients: the multivariate case.
\newblock volume 225, pages 51--75. 2000.
\newblock Formal power series and algebraic combinatorics (Toronto, ON, 1998).

\bibitem{Courtiel}
J.~Courtiel, S.~Melczer, M.~Mishna, and K.~Raschel.
\newblock Weighted lattice walks and universality classes.
\newblock {\em J. Combin. Theory Ser. A}, 152:255--302, 2017.

\bibitem{DeWa-15}
D.~Denisov and V.~Wachtel.
\newblock Random walks in cones.
\newblock {\em Ann. Probab.}, 43(3):992--1044, 2015.

\bibitem{dreyfus2021differential}
T.~Dreyfus.
\newblock Differential algebraic generating functions of walks in the quarter
  plane.
\newblock {\em S{\'e}min. Lothar. Combin.}, 87B(10):1--23, 2023.

\bibitem{dreyfus2019length}
T.~Dreyfus and C.~Hardouin.
\newblock Length derivative of the generating function of walks confined in the
  quarter plane.
\newblock {\em Confluentes Math.}, 13(2):39--92, 2021.

\bibitem{dreyfus2018nature}
T.~Dreyfus, C.~Hardouin, J.~Roques, and M.~F. Singer.
\newblock On the nature of the generating series of walks in the quarter plane.
\newblock {\em Invent. Math.}, 213(1):139--203, 2018.

\bibitem{dreyfus2019differential}
T.~Dreyfus and K.~Raschel.
\newblock Differential transcendence \& algebraicity criteria for the series
  counting weighted quadrant walks.
\newblock In {\em Publications math\'{e}matiques de {B}esan\c{c}on. {A}lg\`ebre
  et th\'{e}orie des nombres. 2019/1}, volume 2019/1 of {\em Publ. Math.
  Besan\c{c}on Alg\`ebre Th\'{e}orie Nr.}, pages 41--80. Presses Univ.
  Franche-Comt\'{e}, Besan\c{c}on, 2020.

\bibitem{EP-22}
A.~Elvey~Price.
\newblock Enumeration of three quadrant walks with small steps and walks on
  other {M}-quadrant cones.
\newblock {\em Trans. Am. Math. Soc.}, to appear.

\bibitem{FIM17}
G.~Fayolle, R.~Iasnogorodski, and V.~Malyshev.
\newblock {\em Random walks in the quarter plane}, volume~40 of {\em
  Probability Theory and Stochastic Modelling}.
\newblock Springer, Cham, second edition, 2017.
\newblock Algebraic methods, boundary value problems, applications to queueing
  systems and analytic combinatorics.

\bibitem{Fayolle-Raschel-10}
G.~Fayolle and K.~Raschel.
\newblock On the holonomy or algebraicity of generating functions counting
  lattice walks in the quarter-plane.
\newblock {\em Markov Process. Related Fields}, 16(3):485--496, 2010.

\bibitem{hardouin2021differentially}
C.~Hardouin and M.~F. Singer.
\newblock On differentially algebraic generating series for walks in the
  quarter plane.
\newblock {\em Selecta Math. (N.S.)}, 27(5):Paper No. 89, 49, 2021.

\bibitem{JiTaZh-21}
R.~Jiang, J.~Tavakoli, and Y.~Zhao.
\newblock An upper bound and finiteness criteria for the {Galois} group of
  weighted walks with rational coefficients in the quarter plane.
\newblock {\em C. R., Math., Acad. Sci. Paris}, 359(5):563--576, 2021.

\bibitem{JoSi-87}
G.~A. Jones and D.~Singerman.
\newblock {\em Complex functions}.
\newblock Cambridge University Press, Cambridge, 1987.
\newblock An algebraic and geometric viewpoint.

\bibitem{zbMATH06577211}
M.~Kauers and R.~Yatchak.
\newblock Walks in the quarter plane with multiple steps.
\newblock In {\em Proceedings of the 27th international conference on formal
  power series and algebraic combinatorics, FPSAC 2015, Daejeon, South Korea,
  July 6--10, 2015}, pages 25--36. Nancy: The Association. Discrete Mathematics
  \& Theoretical Computer Science (DMTCS), 2015.

\bibitem{KuRa12}
I.~Kurkova and K.~Raschel.
\newblock On the functions counting walks with small steps in the quarter
  plane.
\newblock {\em Publ. Math. Inst. Hautes \'Etudes Sci.}, 116:69--114, 2012.

\bibitem{new}
I.~Kurkova and K.~Raschel.
\newblock New steps in walks with small steps in the quarter plane: series
  expressions for the generating functions.
\newblock {\em Ann. Comb.}, 19(3):461--511, 2015.

\bibitem{lipshitz1989d}
L.~Lipshitz.
\newblock {$D$}-finite power series.
\newblock {\em J. Algebra}, 122(2):353--373, 1989.

\bibitem{MR2553316}
M.~Mishna and A.~Rechnitzer.
\newblock Two non-holonomic lattice walks in the quarter plane.
\newblock {\em Theoret. Comput. Sci.}, 410(38-40):3616--3630, 2009.

\end{thebibliography}

\end{document}